\documentclass[envcountsect,envcountsame]{llncs} 

\usepackage{amsmath}
\usepackage{amsfonts,amssymb}
\usepackage{color,enumerate,array}

\sloppy
\pagestyle{plain}

\usepackage{graphicx} 
\usepackage{psfrag} 
\usepackage{epsf} 
\usepackage[utf8]{inputenc} 
\usepackage{epstopdf}
\usepackage{mhchem} 
\usepackage{subfigure}

%%%%%%%%%%%%%%%%%%%%%%%%%%%%%%%%%%%%%%%%%%%%%%%%%%%%%%%% 
% 
% Author's definitions 

\newcommand{\DEF}[1]{{\em #1\/}} 
 
\newcommand{\dset}[2]{\left\{#1 \:|\: #2\right\}}

\newcommand{\crn}{\operatorname{cr}} 
 
\newcommand\tcrn{{\operatorname{tcr}}}

\newcommand{\cA}{{\cal A}} 
\newcommand{\cB}{{\cal B}} 
\newcommand{\cC}{{\cal C}} 
\newcommand{\cG}{{\cal G}} 
\newcommand{\cK}{{\cal K}}

\newcommand{\cP}{{\cal P}}

\newcommand{\cT}{{\cal T}} 
\newcommand{\cU}{{\cal U}} 
\newcommand{\cS}{{\cal S}}

\newcommand{\cF}{{\cal F}} 
\newcommand{\cW}{{\cal W}}

\newcommand{\NN}{\mathbb N} 
\newcommand{\QQ}{\mathbb Q} 
\newcommand{\RR}{\mathbb R}

\newcommand{\ZZ}{\mathbb Z}

%\newcommand{\cali}[1]{{\cal #1}} 

%%%%%%%%%%%%%%%%%%%%%%%%%%%%%%%%%%%%%%%%%%%%%%%%%%%
\title{On Degree Properties of Crossing-Critical\\Families of Graphs%
  \thanks{A short preliminary version of this paper has appeared at Graph
	Drawing 2015.}}

\author{
Drago Bokal\inst1\,%
\thanks{This research was supported by the internationalisation of Slovene higher education
within the framework of the Operational Programme for Human Resources Development 2007--2013 
and by the Slovenian Research Agency Research projects J1--8130, N1--0057, and L7--5459, 
and by the research programme P1--0297.}
	 \and
Mojca Bra\v ci\v c\inst1
	 \and 
Marek Der\v n\'ar\inst2$\,^{\star\star\star}$
	 \and 
Petr Hlin\v{e}n\'{y}\inst2\,%
\thanks{This research was supported by the Czech Science Foundation under the project 17-00837S.}
  }
  \institute{
    Faculty of Natural Sciences and Mathematics, University of Maribor,
    Slovenia, \\
    \email{drago.bokal@um.si, mojca.bracic@student.um.si},
	 \smallskip\and
    Faculty of Informatics, Masaryk University, 
    Brno, Czech Republic, \\
    \email{m.dernar@gmail.com, hlineny@fi.muni.cz}\,. 
  }

\begin{document}

\maketitle

\begin{abstract} 
Answering an open question from 2007, we construct
infinite $k$-crossing-critical families of graphs that contain
vertices of any prescribed odd degree, for any sufficiently large~$k$.
To answer this question, we introduce several properties of infinite
families of graphs and operations on the families allowing us to 
obtain new families preserving those properties. This conceptual
setup allows us to answer general questions on behaviour of degrees 
in crossing-critical graphs: we show that, 
for any set of integers $D$ such that 
$\min(D)\geq 3$ and $3,4\in D$, and for any sufficiently large $k$, 
there exists a $k$-crossing-critical family such that the 
numbers in $D$ are precisely the vertex degrees that occur 
arbitrarily often in (large enough) graphs of this family.
Furthermore, even if both $D$ and some average degree in 
the interval $(3,6)$ are prescribed, $k$-crossing-critical
families exist for any sufficiently large $k$.
\begin{keywords}
crossing number, tile drawing, degree-universality,
average degree, crossing-critical graph. 
\end{keywords}
\end{abstract}

%%%%%%%%%%%%%%%%%%%%%%%%%%%%%%%%%%%%%%%%%%%%%%%%%%%%%%%%%%%%%%%%%%%%%%%%%%% 

%%%%%%%%%%%%%%%%%%%%%%%%%%%%%%%%%%%%%%%%%%%%%%%%%%%%%%%%%%%%%%%%%%%%%%%%%%% 

\section{Introduction} 

Reducing the number of crossings in a drawing of a graph is considered one
of the most important drawing aesthetics.
Consequently, a great deal of research work has been invested into
understanding what forces the number of edge crossings in a drawing of
the graph to be large.
There exist strong quantitative lower bounds, such as the famous Crossing
Lemma \cite{Ajtai19829,cit:leighton}.
However, the quantitative bounds typically show their strength only in
dense graphs, while in the area of graph drawing, we often deal with graphs
having few edges.

The reasons for sparse graphs to have many crossings in any drawing
are structural (there is a lot of ``nonplanarity'' in them).
These reasons can be understood via so called {\em$k$-crossing-critical}
graphs, which are the subgraph-minimal graphs that require at least~$k$ edge crossings
(the ``minimal obstructions'').
While there are only two $1$-crossing-critical graphs, up to
subdivisions---the Kuratowski graphs $K_5$ and $K_{3,3}$---it 
has been known already since \v{S}ir\'{a}\v{n}'s~\cite{cit:siran}
and Kochol's~\cite{cit:kochol} constructions that, for any $k\geq2$, the structure of $k$-crossing-critical
graphs is quite rich and non-trivial.

Although $2$-crossing-critical graphs can be efficiently (although not
easily) characterized~\cite{cit:2critchar}, 
a full description for any $k\geq3$
is clearly out of our current reach.
Consequently, research has focused on interesting properties shared by all
$k$-crossing-critical graphs (for certain $k$); 
successful attempts include, e.g.,
\cite{cit:embgrids,cit:nestedcycles,cit:pw,cit:starsbonds,cit:rtcrit}.
While we would like to establish as many specific properties of
crossing-critical graphs as possible, 
the reality unfortunately seems to be
against it. Many desired and conjectured properties of crossing-critical
graphs have already been disproved by often complex and sophisticated
constructions showing the odd behaviour of crossing-critical families,
e.g.~\cite{cit:dvorakmohar,cit:pathcrit,cit:newinfcrit,cit:avgcr4}.

We study properties of {\em infinite families} of 
$k$-crossing-critical graphs, for fixed
values of~$k$, since sporadic ``small'' examples 
of $k$-crossing-critical graphs tend to
behave very wildly for every $k>1$.
Among the most studied such properties are those related to vertex
degrees in the critical families, see
\cite{cit:avgcrit,cit:dvorakmohar,cit:nestedcycles,cit:newinfcrit,cit:avgcr4}.
Often the research focused on the average degree 
a $k$-crossing-critical family
may have---this rational number clearly falls into the interval $[3,6]$
if we forbid degree-$2$ vertices.
It is now known \cite{cit:nestedcycles} that the true values 
fall into the open interval $(3,6)$, and all the rational values in this
interval can be achieved~\cite{cit:avgcrit}.
However, for a fixed $k$, one cannot come 
arbitrarily close to 6 \cite{cit:nestedcycles}.

In connection with the proof of bounded pathwidth 
for $k$-crossing-critical families
\cite{cit:pathcrit,cit:pw}, it turned out to be a fundamental question whether
$k$-crossing-critical graphs have maximum degree bounded in~$k$.
The somehow unexpected negative answer was given by Dvo\v{r}\'ak and
Mohar~\cite{cit:dvorakmohar}.
In 2007, Bokal noted that all the known (by that time) constructions of
infinite $k$-crossing-critical families seem to use only vertices of degrees
$3,4,6$, and he asked what other degrees can occur frequently
(see the definition in Section~\ref{sc:gd:tools}) in
$k$-crossing-critical families.
Shortly after that Hlin\v en\'y extended his previous
construction~\cite{cit:pathcrit} to include an arbitrary combination
of any even degrees~\cite{cit:newinfcrit}, for sufficiently large~$k$. 
The characterization of 2-crossing-critical graphs \cite{cit:2critchar} implied 
that also vertices of degree $5$ occur arbitrarily often in $2$-crossing-critical graphs.

Though, \cite{cit:newinfcrit} answered only the easier half of Bokal's
question, and it remained a wide open problem of whether there exist
infinite $k$-crossing-critical families whose members contain many vertices
of odd degrees greater than~$5$.
Our joint investigation has recently led to an ultimate positive answer.

The contribution and new results of our paper can be summarized as follows:
\begin{itemize}
\item In Section~\ref{sc:gd:tools}, we review the tools which are commonly used
in constructions of crossing-critical families.
\item Section~\ref{sc:construction} presents the key new contribution---a
construction of crossing-critical graphs with repeated occurrence of any
prescribed odd vertex degree (Proposition~\ref{pro:degreesodd} and Theorem~\ref{thm:constrodd}).
\item In Section~\ref{sc:freq}, we combine the new construction of
Section~\ref{sc:construction} with previously known constructions to prove
the following:
for any set of integers $D$ such that $\min(D)=3$ and $3,4\in D$,
and for all sufficiently large $k$, there exists an infinite
$k$-crossing-critical family such that the numbers in $D$ are precisely the vertex
degrees which occur frequently in this family
(Theorem~\ref{thm:alluniversal}).
\item We extend the previous results in Section~\ref{sc:average}
to include an \mbox{exhaustive} discussion of possible average vertex degrees
attained by our degree-restricted crossing-critical families
(Theorem~\ref{thm:alluniversal-avgdeg}).
\item Then, in Sections~\ref{sc:2cc} and~\ref{sc:2ccdeg},
we pay special attention to infinite families of $2$-crossing-critical graphs 
and provide an exhaustive survey of their degree-related properties.
\item Finally, in concluding Section~\ref{sc:final}, we 
list some remaining interesting open questions.
\end{itemize}

%%%%%%%%%%%%%%%%%%%%%%%%%%%%%%%%%%%%%%%%%%%%%%%%%%%%%%%%%%%%%%%%%%%%%%%%%%% 
\section{Preliminaries}
\label{sc:gd:tools} 

We consider {\em finite multigraphs} without loops by default
(i.e., we allow multiple edges unless we explicitly call a graph {\em simple}),
and use the standard graph terminology otherwise.
The \DEF{degree} of a vertex~$v$ in a graph $G$ is the number of edges of $G$ incident
to~$v$ (cf.~multigraphs), and the \DEF{average degree} of $G$ is the average
of all the vertex degrees of~$G$.

\subsection{Crossing number}
In a {\em drawing} of a graph $G$, the vertices of $G$ are points 
and the edges are simple curves joining their endvertices.
It is required that no edge passes through a vertex, 
and no three edges cross in a common point.
The {\em crossing number} $\crn(G)$ of a graph $G$
is the minimum number of crossing points of edges
in a drawing of $G$ in the plane.
For $k\in\NN$, we say that a graph $G$ is {\em $k$-crossing-critical}, 
if $\crn(G)\geq k$ but, for every edge $e$ of $G$, $\crn(G-e)<k$.

Note that a vertex of degree $2$ in $G$ is not relevant for a drawing
of~$G$ and for the crossing number, and we will often replace such vertices
by edges between their two neighbours.
Since also vertices of degree $1$ are irrelevant for the crossing number,
it is quite common to assume minimum degree~$3$.

\subsection{Degree-universality}

The following terms formalize a vague notion that a certain vertex degree
occurs frequently or arbitrarily often in an infinite family.
For a finite set $D\subseteq \NN$, we say that a
family of graphs $\cF$ is \DEF{$D$-universal}, if and only if, for every
integer $m$, 
there exists a graph $G\in\cF$ such that, for every $d\in D$, $G$ has at least $m$ vertices of degree $d$.
It follows easily that $\cF$ has infinitely many such graphs.  

Clearly, if $\cF$ is $D$ universal and $D'\subseteq D$, then $\cF$ is also
$D'$-universal. The family of all sets $D$, for which a given $\cF$ is
$D$-universal, therefore forms a poset under relation $\subseteq$.
Maximal elements of this poset are of particular interest, and for 
``well-behaved'' $\cF$, these maximal elements are finite and unique. 
We distinguish this case with the following definition: 
$\cF$ is \DEF{$D$-max-universal}, if it is
$D$-universal, there are only finitely many degrees appearing in
graphs of $\cF$ that are not in $D$, and there exists 
an integer $M$, such that any degree not in $D$
appears at most $M$ times in any graph of $\cF$.

Note that if $\cF$ is both $D$-max-universal and $D'$-max-universal, then $D=D'$.
It can also be easily seen that if $\cF$ is $D$-max-universal,
then there exists infinite $\cF'\subseteq\cF$ such that, for any $m$,
{\em every} sufficiently large member of $\cF'$ has, for each $d\in D$, at least $m$ vertices of
degree $d$.
Though, we do not specifically mention this property in the formal
definition.

\subsection{Tools for constructing crossing-critical graphs} 
\label{sc:tools}

A principal tool used in construction of crossing-critical graphs are tiles. They 
were used already in the early papers on infinite families of crossing-critical 
graphs by Kochol \cite{cit:kochol} and Richter and Thomassen \cite{cit:rtcrit},
although they were formalized only in the work of Pinontoan and Richter 
\cite{cit:pltile,cit:gentile}, answering
Salazar's question \cite{cit:avgcr4} on average degrees in infinite families of 
$k$-crossing-critical graphs. Bokal built upon these results to fully settle
Salazar's question when combining tiles with zip product \cite{cit:avgcrit}. 
Also a recent result that all large 2-crossing-critical graphs are composed of 
large multi-sets of specific 42 tiles \cite{cit:2critchar} demonstrates that
tiles are intimately related to crossing-critical graphs. In this section, 
we summarize the known results from \cite{cit:avgcrit,cit:2critchar,cit:pltile}, 
which we need for our constructions.

Tiles are essentially graphs equipped with two sequences of vertices that are 
identified
among tiles or within a tile in order to, respectively, form new tiles or
tiled graphs. The tiles can be drawn in the unit square respecting the order of 
these sequences of vertices, thus providing special, restricted drawings of tiles.
Due to the restriction, the crossing number of these special drawings is an upper
bound to the crossing number of either underlying graphs, or the graphs obtained by
identifying these specific vertices. The formal concepts allowing these operations
are summarized in the following definition and the lemma immediately
after it:

\begin{definition}
Let $\lambda=(\lambda_1, \ldots, \lambda_l)$ and $\rho=(\rho_1, \ldots, \rho_r)$ 
be two sequences of distinct vertices of a graph $G$, where no vertex of $G$ 
appears in both $\lambda$ and $\rho$. 
\begin{enumerate}\parskip0pt
\item For any sequence $\lambda$, let $\bar{\lambda}$ denote its reversed sequence.
\item A \DEF{tile} is a triple $T=(G,\lambda, \rho)$.
\item The sequence of vertices $\lambda$ is called the \DEF{left wall} and the sequence of 
vertices $\rho$ is called the \DEF{right wall} of $T$.
\item A \DEF{tile drawing} of a tile $T=(G,\lambda, \rho)$ is a drawing of $G$ in unit 
square $[0,1]\times [0,1]$ such that:
\begin{itemize}
\item[--] all vertices of the left wall are drawn in $\left \{ 0\right \}\times [0,1]$ 
and all vertices of the right wall are drawn in $\left \{ 1\right \}\times [0,1]$;

\item[--] the left wall and the right wall have both decreasing $y$-coordinates.
\end{itemize}

\item The \DEF{tile crossing number $\text{cr}(T)$} of a tile $T$ is the smallest crossing 
number over all tile drawings of $T$.

\item A tile $T=(G, \lambda, \rho)$ is \DEF{compatible} with a tile $T'=(G', \lambda', \rho')$ 
if $|\rho|=|\lambda'|$ and \DEF{cyclically-compatible} if it is compatible with itself.

\item A sequence of tiles $(T_0, \ldots, T_m)$ is \DEF{compatible}, if, for $i=0, \ldots, m-1$, tiles $T_i$ and $T_{i+1}$ 
are compatible. 
It is \DEF{cyclically-compatible} if also $T_m$ is compatible with $T_0$.

\item The \DEF{join} of two compatible tiles $T=(G,\lambda, \rho)$ and $T'=(G', \lambda', \rho')$ 
is defined as the tile $T \otimes T'=(G \otimes G', \lambda, \rho')$, where 
$G\otimes G'$ represents the graph obtained from the union of graphs $G$ and $G'$,
 by identifying, for $i=1,\ldots, |\rho|$, $\rho_i$ with $\lambda'_i$. 
 If a vertex of degree 2 is introduced, then all maximal paths whose internal vertices are all of degree 2 are contracted to a single edge.
 Introduced double edges are retained.

\item Since the operator $\otimes$ is associative, the \DEF{join} $\otimes \cT$ of a compatible 
sequence of tiles $\cT=(T_0, \ldots, T_m)$ is defined as $\otimes \cT=T_0 \otimes \ldots \otimes T_m$.

\item Let $T=(G, \lambda, \rho)$ be a cyclically-compatible tile. The \DEF{cyclization} $\circ T$
 of a tile $T$ is the graph $G$ obtained by identifying, for $i=1, \ldots, |\lambda|$, $\lambda_i$ with $\rho_i$.

\item Let $\cT=(T_0, \ldots, T_m)$ 
be a cyclically-compatible sequence of tiles with
$T_0=(G,\lambda, \rho)$, $T_m=(G',\lambda', \rho')$. 
The \DEF{cyclization} of $\cT$ is defined as 
$\circ \cT=\circ{(T_0\otimes \ldots \otimes T_m)}$.

\item Let $T=(G, \lambda, \rho)$ be a tile. The \DEF{right-inverted} tile 
$T^{\updownarrow}$ is the tile $(G, \lambda, \bar{\rho})$ and the \DEF{left-inverted} 
tile $^{\updownarrow} T$ is the tile $(G, \bar{\lambda}, \rho)$. 
The \DEF{inverted} tile of $T$ is the tile $^{\updownarrow}T^{\updownarrow}=(G, \bar{\lambda}, \bar{\rho})$ 
and the \DEF{reversed} tile of $T$ is the tile $T^{\leftrightarrow}=(G,\rho, \lambda)$.
\item For a compatible sequence of tiles $\cT$, the \DEF{twist} is 
$\cT^{\updownarrow}=(T_0, \ldots, T_{m}^{\updownarrow})$, and the \DEF{$i$-cut} 
of $\cT$ is $\cT/i=(T_{i+1},\ldots ,T_m, T_0,\ldots,T_{i-1})$.

\end{enumerate}
\end{definition}

\begin{lemma}[\cite{cit:pltile}]
\label{le: lema1}
Let $T$ be a cyclically-compatible tile. Then, $\crn(\circ T)\leq \tcrn(T)$. 
Let $\cT=(T_0,\ldots, T_m)$ be a compatible sequence of tiles. Then, 
$\tcrn(\otimes \cT)\leq \sum_{i=0}^{m}\tcrn(T_i)$.
\end{lemma}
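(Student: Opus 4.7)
The plan is to prove both parts of the lemma by converting optimal tile drawings of the individual tiles into drawings that witness the claimed bounds. The tile-drawing conditions, placing the walls on the left and right sides of the unit square with decreasing $y$-coordinates, are tailored precisely to make such geometric gluings routine.

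For the second inequality, I would fix optimal tile drawings $D_i$ of each $T_i=(G_i,\lambda_i,\rho_i)$ achieving $\tcrn(T_i)$ crossings. Rescale $D_i$ horizontally into the strip $[i/(m+1),(i+1)/(m+1)]\times[0,1]$ and then apply a vertical self-homeomorphism of that strip, fixing the top and bottom edges pointwise, so that the $y$-coordinate of the $j$-th vertex of $\rho_i$ matches the $y$-coordinate of the $j$-th vertex of $\lambda_{i+1}$, for all $i<m$ and all $j$. The compatibility condition $|\rho_i|=|\lambda_{i+1}|$, together with the decreasing order of both walls in $y$, guarantees such a homeomorphism exists. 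Concatenating the strips yields a tile drawing of $\otimes\cT$ whose crossings are precisely the disjoint union of the crossings of the $D_i$, which establishes $\tcrn(\otimes\cT)\leq\sum_{i=0}^{m}\tcrn(T_i)$.

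For the first inequality, I would start from an optimal tile drawing of $T$ and adjust it (again by a vertical self-homeomorphism fixing the horizontal sides) so that $\lambda_j$ and $\rho_j$ share the same $y$-coordinate for every $j$. Identifying the two vertical sides of the unit square by the identity on $y$-coordinates produces a cylinder carrying a drawing of $\circ T$ with at most $\tcrn(T)$ crossings. Since the cylinder embeds in the plane as an annulus, this is a planar drawing of $\circ T$, hence $\crn(\circ T)\leq\tcrn(T)$.

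The main technical point to verify is that the degree-$2$ contraction clause built into the definition of the join $\otimes$ does not invalidate the bound: when two wall vertices of degree $1$ in their tiles are identified, the resulting vertex has degree $2$, and the definition then contracts the maximal degree-$2$ path through it to a single edge. Since such contractions (and the inverse subdivisions) leave the crossing number unchanged, the crossing count of the composed drawing is preserved. The rescalings and vertical homeomorphisms used above are piecewise-linear self-maps of the closed unit square that fix the boundary setwise, so they transfer each $D_i$ faithfully without creating new crossings or disturbing the wall alignment.
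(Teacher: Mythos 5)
Your argument is correct and is essentially the standard strip-concatenation and annulus-identification proof from Pinontoan and Richter, which is exactly the source the paper cites for this lemma (the paper itself gives no proof, only the reference). The alignment of walls via boundary-fixing homeomorphisms and the observation that the degree-$2$ contractions in the definition of $\otimes$ preserve crossing number are precisely the points that need checking, and you handle both.
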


The above Lemma applies without any information on the internal structure of the tiles.
However, by exploiting their internal structure (planarity and enough connectivity),
we can also prove a lower bound on the tile crossing number, which can, with sufficiently
many tiles, be exploited for the lower bound on the crossing number of the graph resulting 
from the tile. Prerequisites for these applications are summarized in the following
definition and applied in the theorem that follows.
 
\begin{definition}
Let $T=(G, \lambda, \rho)$ be a tile. Then:
\begin{enumerate}
\item $T$ is \DEF{connected} if $G$ is connected. 
\item $T$ is \DEF{planar} if $\tcrn(T)=0$. 
\item $T$ is \DEF{perfect} if the following holds:
\begin{itemize}
\def\labelitemi{--}
\item $|\lambda|=|\rho|$;
\item $G-\lambda$ and $G-\rho$ are connected;
\item for every $v\in\lambda$ there is a path to the right wall $\rho$ 
in $G$ internally disjoint from $\lambda$ and for every $u\in\rho $ there 
is a path to the left wall $\lambda$ in $G$ internally disjoint from $\rho$;
\item for every $0\leq i<j\leq|\lambda|$, there is a pair of disjoint paths, 
one joining $\lambda_i$ and $\rho_i$, and the other joining $\lambda_j$ and $\rho_j$.
\end{itemize}
\end{enumerate}
\end{definition}

\begin{theorem}[\cite{cit:avgcrit}]
\label{th: theorem1}
Let $\cT=(T_0, \ldots,T_{\ell}, \ldots, T_m)$ be a cyclically-compatible sequence of 
tiles. Assume that, for some integer $k\geq 0$, the following hold: $m\geq 4k-2$ 
 and, for every $i\in \left\{0,\ldots, m\right\} \backslash \left\{\ell\right\}$,
 $\tcrn(\otimes (\cT / i)) \geq k$, and the tile $T_i$ is a perfect planar tile. Then, $\crn(\circ\cT)\geq k$. 
\end{theorem}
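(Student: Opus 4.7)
The plan is to argue by contradiction: assume there is a drawing $D$ of $\circ\cT$ in the plane with strictly fewer than $k$ crossings, and use the perfect planar structure of the tiles $T_i$ (for $i\neq\ell$) to produce some index $i\neq\ell$ for which $D$ induces a tile drawing of $\otimes(\cT/i)$ also with fewer than $k$ crossings. Since this contradicts the hypothesis $\tcrn(\otimes(\cT/i)) \geq k$, the assumption fails and $\crn(\circ\cT) \geq k$.

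First I would view $D$ as a drawing on the sphere and exploit the cyclic structure of $\circ\cT$. For each $i\neq\ell$, my aim is to route a simple closed curve $\gamma_i$ that meets $G$ only at the wall vertices identifying $T_{i-1}$ with $T_i$ and $T_i$ with $T_{i+1}$, thereby separating a disk containing $T_i$ from the rest of $D$. Cutting $D$ along $\gamma_i$ and discarding the $T_i$-disk leaves a tile drawing of $\otimes(\cT/i)$ whose crossings are exactly the crossings of $D$ not located inside the discarded disk, provided $\gamma_i$ itself does not pick up any edge-crossings. The perfectness of $T_i$ is what makes this possible: the pairwise disjoint paths between corresponding wall vertices, combined with the connectivity of $G-\lambda$ and $G-\rho$, provide a planar skeleton along which $\gamma_i$ can be routed, while $\tcrn(T_i)=0$ ensures no internal crossing of $T_i$ blocks the surgery.

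The heart of the argument is an averaging step. Each of the fewer than $k$ crossings of $D$ can be \emph{charged} to only a bounded number of indices $i$, specifically at most four, since a crossing involves two edges, each of which interacts with only constantly many consecutive tile regions along the cyclization. Hence the total number of (index, crossing) charges is at most $4(k-1) < 4k-2 \leq m$, so by pigeonhole some index $i\in\{0,\ldots,m\}\setminus\{\ell\}$ receives no charges. For this $i$, the curve $\gamma_i$ picks up no crossings of $D$ and encloses no crossings of $D$ in the discarded disk, so the resulting tile drawing of $\otimes(\cT/i)$ has at most $\crn(D) < k$ crossings, the desired contradiction.

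The main obstacle is the topological precision of the cut construction: one must prove that for each $i\neq\ell$ the separating curve $\gamma_i$ can really be routed through $D$ with the claimed properties, and one must bound rigorously the number of indices to which any single crossing can be charged. Both steps rest on the disjoint-path and connectivity conditions in the definition of a \emph{perfect} tile; without them, a crossing could be forced to lie ``around'' $T_i$ for many consecutive $i$'s, breaking the averaging bound, or no valid $\gamma_i$ would exist at all.
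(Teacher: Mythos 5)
This theorem is quoted from \cite{cit:avgcrit} (building on \cite{cit:pltile}) and is not proved in the present paper, so your attempt can only be measured against the strategy of those sources. Your overall plan --- assume a drawing $D$ of $\circ\cT$ with fewer than $k$ crossings, cut along a closed curve isolating some perfect planar tile $T_i$ with $i\neq\ell$, and recover a tile drawing of $\otimes(\cT/i)$ with fewer than $k$ crossings --- is indeed the right skeleton and matches the cited proofs in outline. But the two steps you yourself flag as ``the main obstacle'' are precisely where all the work lies, and as written they are gaps rather than arguments.

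First, the bound ``each crossing is charged to at most four indices'' is asserted, not established. An edge of a tile $T_j$ is a curve in the plane that may wander through the regions occupied by arbitrarily many other tiles, so ``interacts with only constantly many consecutive tile regions'' has no a priori content. The standard argument instead charges a crossing to the (at most two) tiles owning the two crossed edges, concludes that at least $(m+1)-2(k-1)\geq 2k+1$ tiles are \emph{clean} (no edge crossed), and then runs a pigeonhole argument on the cyclic order of clean and dirty tiles to find a clean tile in a usable position avoiding $\ell$; this is where the specific hypothesis $m\geq 4k-2$ enters, and your constant $4$ is not derived from anything. Second, and more seriously, the existence of the separating curve $\gamma_i$ is exactly the content of the Pinontoan--Richter analysis of clean perfect planar tiles: one must show that a crossing-free drawing of $T_i$ inside $D$, together with the connectivity of $G-\lambda$ and $G-\rho$, the wall-to-wall paths, and the disjoint $\lambda_j$--$\rho_j$ path system, forces the induced drawing of $T_i$ to lie in a closed disk whose boundary meets the two walls in the correct monotone order, with the rest of the graph outside. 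Without this, there is no guarantee that $\gamma_i$ exists, that it meets the graph only in wall vertices, or that the restricted drawing satisfies the wall-order condition required of a \emph{tile} drawing. So the proposal identifies the correct high-level approach but leaves both load-bearing lemmas unproved.
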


This theorem can yield exact crossing number under the assumptions of the next corollary.

\begin{corollary}[\cite{cit:avgcrit}]
\label{co: planarity}
Let $\cT=(T_0, \ldots,T_{\ell}, \ldots,  T_m)$ be a cyclically-compatible sequence 
of tiles and let $k=\min_{i\in \left\{0,\ldots, m\right\} \backslash \left\{\ell\right\}}\tcrn(\otimes(\cT / i))$. 
If $m\geq 4k-2$ and, for every $i\in \left\{0,\ldots, m\right\} \backslash \left\{\ell\right\}$,
the tile $T_i$ is a perfect planar tile, 
then $\crn(\circ\cT)=k$.
\end{corollary}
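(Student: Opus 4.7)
The plan is to prove the equality $\crn(\circ \cT) = k$ by combining matching upper and lower bounds, where Lemma~\ref{le: lema1} supplies the upper bound and Theorem~\ref{th: theorem1} supplies the lower bound. Neither direction requires new combinatorial ideas beyond the tools already assembled; the whole point of the corollary is to observe that the definition of $k$ makes all the hypotheses of Theorem~\ref{th: theorem1} automatic, while still keeping enough control to invoke Lemma~\ref{le: lema1}.

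For the upper bound, I would first note that since $k$ is defined as a minimum over indices $i\in\{0,\ldots,m\}\setminus\{\ell\}$, there exists some index $i^{\star}$ for which $\tcrn(\otimes(\cT/i^{\star})) = k$. The cyclization operation is invariant under cyclic rotation of the tile sequence: identifying the left and right walls of $T_0\otimes\cdots\otimes T_m$ produces the same graph as identifying the walls of the rotated join $T_{i^{\star}+1}\otimes\cdots\otimes T_m\otimes T_0\otimes\cdots\otimes T_{i^{\star}}$, because ``where we cut'' the cycle of wall identifications is immaterial. Hence $\circ\cT = \circ(\otimes(\cT/i^{\star}))$, and since $\cT$ is cyclically-compatible the single tile $\otimes(\cT/i^{\star})$ is cyclically-compatible as well. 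Lemma~\ref{le: lema1} then yields $\crn(\circ\cT) \leq \tcrn(\otimes(\cT/i^{\star})) = k$.

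For the lower bound I would simply verify the hypotheses of Theorem~\ref{th: theorem1}. The sequence $\cT$ is cyclically-compatible; the inequality $m \geq 4k-2$ is assumed; and for every $i \in \{0,\ldots,m\}\setminus\{\ell\}$ the tile $T_i$ is perfect planar by hypothesis. The remaining condition, $\tcrn(\otimes(\cT/i)) \geq k$ for $i \neq \ell$, is immediate from the definition of $k$ as the minimum of exactly these quantities. Theorem~\ref{th: theorem1} therefore gives $\crn(\circ\cT) \geq k$, and combining the two bounds completes the proof.

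The only step that could be called an ``obstacle'' is the justification that cyclization is invariant under cyclic shifts of the tile sequence, which is what allows the minimizing index $i^{\star}$ to be converted into a bound on $\crn(\circ\cT)$ rather than on $\crn(\circ(\cT/i^{\star}))$ for some particular shift. This is purely a bookkeeping check about vertex identifications along the walls and carries no combinatorial content, so no genuine difficulty is expected.
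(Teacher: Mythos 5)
Your overall strategy (upper bound from Lemma~\ref{le: lema1} plus rotation invariance of cyclization, lower bound from Theorem~\ref{th: theorem1}) is the right one; the paper itself gives no proof, citing \cite{cit:avgcrit}, and the lower-bound half of your argument is a correct, complete verification of the hypotheses of Theorem~\ref{th: theorem1}. However, the upper bound as written has a genuine gap: you treat the $i$-cut $\cT/i^{\star}$ as a mere cyclic rotation of $\cT$, but by the paper's definition $\cT/i=(T_{i+1},\ldots,T_m,T_0,\ldots,T_{i-1})$ \emph{deletes} the tile $T_i$ from the sequence. Hence $\circ(\otimes(\cT/i^{\star}))$ is the cyclization of a graph missing $T_{i^{\star}}$ entirely, and the identity $\circ\cT=\circ(\otimes(\cT/i^{\star}))$ is false; applying Lemma~\ref{le: lema1} to $\otimes(\cT/i^{\star})$ bounds the crossing number of the wrong graph.

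The repair is short but uses a hypothesis your proof never touches in this half, namely the planarity of the tiles $T_i$ for $i\neq\ell$. Rotation invariance gives $\circ\cT=\circ\bigl((\otimes(\cT/i^{\star}))\otimes T_{i^{\star}}\bigr)$, i.e.\ the deleted tile must be reinserted at the end of the rotated join. Then the second part of Lemma~\ref{le: lema1} (subadditivity of $\tcrn$ over joins) together with $\tcrn(T_{i^{\star}})=0$ yields
$\crn(\circ\cT)\leq\tcrn\bigl((\otimes(\cT/i^{\star}))\otimes T_{i^{\star}}\bigr)\leq\tcrn(\otimes(\cT/i^{\star}))+\tcrn(T_{i^{\star}})=k$.
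That planarity is genuinely needed here (and not just for the lower bound) is a useful sanity check: if $T_{i^{\star}}$ could force crossings of its own, the quantity $\tcrn(\otimes(\cT/i^{\star}))$, which ignores $T_{i^{\star}}$ completely, could not bound $\crn(\circ\cT)$ from above. With this correction your argument goes through.
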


Exact lower bounds facilitate establishing criticality of the tiles and graphs,
as the smallest drop in crossing number suffices for criticality of an edge.
For combinatorially handling the criticality of the
constructed graph on the basis of the properties of tiles, 
we introduce \DEF{degeneracy}
of tiles and criticality of sequences of tiles as follows:

%K-DEGENERATE 
\noindent
\begin{definition}\label{def:criticalSeq}
\begin{enumerate}
\item A tile $T$ is \DEF{$k$-degenerate} if it is perfect, planar and, for any $e\in E(T)$, $\tcrn(T^{\updownarrow}-e)<k$.
\item A sequence $\cT=(T_0, \ldots, T_m)$ is \DEF{$k$-critical} if, for every $i=0, \ldots, m$, the tile $T_i$ is 
$k$-degenerate and 
$\min_{i\in \left\{0,\ldots, m-1\right\}}\tcrn(\otimes((\cT/i)^{\updownarrow}))\geq k$.
\end{enumerate}
\end{definition}
\noindent
Using these concepts, Corollary \ref{co: planarity} can be applied to establish 
criticality of graphs resulting from crossing critical sequences of tiles 
or from degenerate tiles.
\begin{corollary}[\cite{cit:avgcrit}]
\label{co: criticalGraph}
Let $\cT=(T_0, \ldots, T_m)$ be a $k$-critical sequence of tiles. Then, 
$T=\otimes \cT$ is a $k$-degenerate tile. If $m\geq 4k-2$ and $\cT$ is 
cyclically-compatible, then $\circ{(T^{\updownarrow})}$ is a $k$-crossing-critical 
graph.
\end{corollary}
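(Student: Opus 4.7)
The plan is to prove the two conclusions in sequence: first that $T=\otimes\cT$ is $k$-degenerate, and then, assuming cyclic compatibility and $m\ge 4k-2$, that $\circ{(T^{\updownarrow})}$ is $k$-crossing-critical. Upper bounds on tile-crossing numbers will come from Lemma~\ref{le: lema1}, while the exact crossing number $\crn(\circ T^{\updownarrow})=k$ will come from Corollary~\ref{co: planarity}.

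For the first conclusion I would verify the three parts of $k$-degeneracy of $T$. Perfectness is preserved under the join of compatible perfect tiles: connectivity of $T-\lambda$ and $T-\rho$, the wall-to-wall paths internally disjoint from the originating wall, and the pair-of-disjoint-paths property all extend by concatenating the corresponding paths inside each $T_i$ and using connectivity of $T_i-\lambda$, $T_i-\rho$. Planarity is immediate from Lemma~\ref{le: lema1}, since $\tcrn(T)\le\sum_{i=0}^m\tcrn(T_i)=0$. For the edge-degeneracy condition, let $e\in E(T_j)$. When $j=m$, the factorization $T^{\updownarrow}-e=T_0\otimes\cdots\otimes T_{m-1}\otimes(T_m^{\updownarrow}-e)$ combined with Lemma~\ref{le: lema1} and the $k$-degeneracy of $T_m$ yields $\tcrn(T^{\updownarrow}-e)<k$. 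When $j<m$, the naive decomposition would leave $T_j-e$ rather than $T_j^{\updownarrow}-e$ in the interior, and the hypothesis controls only the latter; here I would propagate the twist cyclically through the trailing perfect planar tiles $T_{j+1},\dots,T_m$, reducing to the already-handled terminal case, and then invoke the $k$-degeneracy of $T_j$.

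For the second conclusion, the sequence $\cT^{\updownarrow}=(T_0,\ldots,T_{m-1},T_m^{\updownarrow})$ is cyclically-compatible because twisting preserves wall lengths, and each of its tiles remains perfect and planar (twisting does not alter the underlying graph). The $k$-critical hypothesis supplies the lower bound $\min_i\tcrn(\otimes((\cT/i)^{\updownarrow}))\ge k$ needed to apply Corollary~\ref{co: planarity} to $\cT^{\updownarrow}$, and combined with $m\ge 4k-2$ it delivers $\crn(\circ T^{\updownarrow})=k$. Edge-criticality is then immediate from the $k$-degeneracy of $T$ proved above: for any $e\in E(T)$,
\[
\crn(\circ T^{\updownarrow}-e)\;\le\;\tcrn(T^{\updownarrow}-e)\;<\;k.
\]

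The main obstacle is the interior-tile case $j<m$ of the edge-degeneracy step. Since $k$-degeneracy constrains only the twisted single-tile $T_j^{\updownarrow}-e$, while the direct join decomposition of $T^{\updownarrow}-e$ exposes the un-twisted $T_j-e$, one must transport the twist from the final tile to the position of $T_j$ without increasing the tile-crossing count. This cyclic rearrangement — equivalently, exploiting the shift-invariance of the Möbius-type cyclization of a $k$-critical sequence of perfect planar tiles — is the delicate technical step on which the whole argument hinges.
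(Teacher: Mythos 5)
The paper itself offers no proof of this corollary (it is imported from~\cite{cit:avgcrit}), so your argument must stand on its own. Its architecture is sound: perfectness and planarity of $\otimes\cT$, edge-degeneracy via Lemma~\ref{le: lema1}, the lower bound via Corollary~\ref{co: planarity}, and criticality from $\crn(\circ T^{\updownarrow}-e)\le\tcrn(T^{\updownarrow}-e)<k$. Your interior-edge case is also essentially right, and can be closed exactly as you indicate: one has the tile identity $T^{\updownarrow}=T_0\otimes\cdots\otimes T_{j-1}\otimes T_j^{\updownarrow}\otimes{}^{\updownarrow}T_{j+1}{}^{\updownarrow}\otimes\cdots\otimes{}^{\updownarrow}T_m{}^{\updownarrow}$ (identifying reversed walls with reversed walls is the same identification as the straight one, and the final right wall is $\bar{\rho}_m$ as required), whence Lemma~\ref{le: lema1} together with $\tcrn({}^{\updownarrow}T_i{}^{\updownarrow})=\tcrn(T_i)=0$ gives $\tcrn(T^{\updownarrow}-e)\le\tcrn(T_j^{\updownarrow}-e)<k$.

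The genuine gap is in the lower-bound step. Applying Corollary~\ref{co: planarity} to the sequence $\cT^{\updownarrow}$ (necessarily with $\ell=m$, since $T_m^{\updownarrow}$ need not be planar) requires $\tcrn(\otimes(\cT^{\updownarrow}/i))\ge k$ for $i\ne m$, whereas Definition~\ref{def:criticalSeq} supplies $\tcrn(\otimes((\cT/i)^{\updownarrow}))\ge k$. Cutting and twisting do not commute: $\cT^{\updownarrow}/i=(T_{i+1},\dots,T_m^{\updownarrow},T_0,\dots,T_{i-1})$ carries the inversion at the \emph{internal} seam between $T_m$ and $T_0$, while $(\cT/i)^{\updownarrow}$ has all straight seams and a reversed \emph{outer} right wall on $T_{i-1}$. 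These are two different underlying graphs, not two presentations of one tile, so the re-bracketing trick from the edge case does not apply here --- fully inverting the trailing block $T_0\otimes\cdots\otimes T_{i-1}$ to absorb the twist would also reverse the outer wall of the cut tile. You therefore need (and silently assume) a separate lemma stating that the tile crossing number of the join of a compatible sequence with exactly one inverted seam does not depend on which seam carries the inversion; proving it requires a cut-and-reglue argument on tile drawings, and it is precisely where the substance of the cited proof lies. As written, your sentence ``the $k$-critical hypothesis supplies the lower bound needed to apply Corollary~\ref{co: planarity} to $\cT^{\updownarrow}$'' conflates the two quantities.
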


\noindent
To estimate the tile crossing number, we use an informal tool called \textit{gadget}. 
This can be any structure inside of a tile $T$, which guarantees a certain number of 
crossings in every tile drawing of $T$. The gadgets we use are twisted pairs of paths,
guaranteeing one crossing each, and staircase strips of width $n$, guaranteeing 
$\binom{n}{2}-1$ crossings.

\noindent
\begin{definition}
A \DEF{traversing path} in a tile $T=(G,\lambda,\rho)$ is a path $P$ in the graph $G$, for which
there exist indices $i(P)\in \left\{ 1, \ldots, |\lambda| \right\}$, and 
$j(P)\in \left\{ 1, \ldots, |\rho|\right\}$, so that $P$ is a path from 
$\lambda(P)=\lambda_{i(P)}$ to $\rho(P)=\rho_{j(P)}$ and $\lambda(P)$ and 
$\rho(P)$ are the only wall vertices of $P$.

A pair of disjoint traversing paths $\left\{P,Q \right\}$ is \DEF{twisted} if $i(P)<i(Q)$ 
and $j(P)>j(Q)$, and \DEF{aligned} otherwise. A family $\cW$ of pairs of disjoint traversing
paths is \DEF{aligned}, if all the pairs in $\cW$ are aligned. The family is \DEF{twisted}, 
if all the pairs are twisted.
\end{definition}
\noindent

The disjointness of a twisted pair $\left\{P,Q \right\}$ implies one crossing in any tile drawing of $T$. 
This is generalized to twisted families in the following lemma: 
\noindent
\begin{lemma}[\cite{cit:avgcrit}]
\label{le: twisted}
Let $\cW$ be a twisted family in a tile $T$, such that no edge occurs
in two distinct paths of $\>\cup\cW$. Then, $\tcrn(T)\geq |\cW|$.
\end{lemma}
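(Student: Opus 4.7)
The plan is to show that each twisted pair in $\cW$ is forced, by a topological argument, to contribute at least one crossing in every tile drawing, and then to argue that the disjoint-edge hypothesis prevents these forced crossings from being ``double-counted'' across different pairs.

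First, I would establish the single-pair fact: if $\{P,Q\}$ is a twisted pair of disjoint traversing paths, then in any tile drawing $D$ of $T$ the curves $P$ and $Q$ cross at least once. For this, I inspect the cyclic order of the four endpoints on the boundary of the unit square. Going clockwise from the bottom-left corner, the left wall is traversed bottom-to-top (so the indices of $\lambda$ decrease) and the right wall top-to-bottom (so the indices of $\rho$ increase), giving cyclic order $\lambda_l,\dots,\lambda_1,\rho_1,\dots,\rho_r$ on the boundary. Since $i(P)<i(Q)$ and $j(P)>j(Q)$, the four endpoints appear in the cyclic order $\lambda(Q),\lambda(P),\rho(Q),\rho(P)$, i.e.\ the endpoints of $P$ and $Q$ interleave on the boundary of the square. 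By a standard Jordan-curve argument (two arcs in a disk whose endpoints interleave on the boundary must meet), and using that $P$ and $Q$ are vertex-disjoint away from their walls while the walls themselves consist of distinct points, $P$ and $Q$ must share at least one interior point of the drawing; since no three edges cross in a point, that shared point is an honest crossing of an edge of $P$ with an edge of $Q$.

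Next, fix a tile drawing $D$ of $T$ realizing $\tcrn(T)$ crossings. For each pair $w=\{P_w,Q_w\}\in\cW$, choose (by the claim above) one crossing point $c_w\in D$ between an edge of $P_w$ and an edge of $Q_w$. The main step is to show that the points $c_w$, for $w\in\cW$, are pairwise distinct crossings of $D$. Suppose $c_w=c_{w'}$ for two distinct $w,w'\in\cW$; then this common crossing is realized by exactly one unordered pair of edges $\{e_1,e_2\}$ in $D$. By construction, $e_1$ lies in $P_w$ and $e_2$ lies in $Q_w$, but also $\{e_1,e_2\}$ consists of one edge of $P_{w'}$ and one edge of $Q_{w'}$. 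Then at least one of $e_1,e_2$ lies in two distinct paths of $\bigcup\cW$ (for instance, if $e_1\in P_{w'}$ then $e_1\in P_w\cap P_{w'}$ with $P_w\neq P_{w'}$, and the case $e_1\in Q_{w'}$ is analogous), contradicting the hypothesis that no edge occurs in two distinct paths of $\bigcup\cW$.

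Having produced $|\cW|$ distinct crossings of $D$, I conclude that $D$ has at least $|\cW|$ crossings, and since $D$ was an optimal tile drawing, $\tcrn(T)\geq|\cW|$, as required. The main conceptual obstacle is the single-pair topological statement in the first step; once it is cleanly justified via the boundary ordering on the unit square, the rest is a simple charging argument in which the edge-disjointness hypothesis is used exactly to forbid collisions of the chosen crossings.
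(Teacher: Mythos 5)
Your proof is correct and is precisely the intended argument: the paper imports this lemma from \cite{cit:avgcrit} without reproving it, but the sentence immediately preceding it (``The disjointness of a twisted pair $\{P,Q\}$ implies one crossing in any tile drawing of $T$. This is generalized to twisted families\dots'') signals exactly your two-step strategy --- the boundary-interleaving/Jordan-curve argument forcing one crossing per twisted pair, followed by a charging argument in which the no-shared-edge hypothesis prevents two pairs from being charged to the same crossing. One small imprecision: in the collision analysis you assert $P_w\neq P_{w'}$ outright, whereas if $P_w=P_{w'}$ one must instead pass to $e_2\in Q_w\cap Q_{w'}$ with $Q_w\neq Q_{w'}$ (which follows from $w\neq w'$), and similarly in the cross case one uses that $P_w=Q_{w'}$ and $Q_w=P_{w'}$ cannot both hold; this is a trivial repair and does not affect correctness.
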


The following definition presents a staircase tile, adapted from \cite{cit:avgcrit}. Such detailed definition is needed as this tile is later used as a part of our new constructed tile defined in Section \ref{sc:construction}.
A reader should understand it quickly when referring to Figure~\ref{fig:staircaseTile}.

\begin{figure}[!ht]
\begin{center}
\includegraphics[width=0.7\hsize]{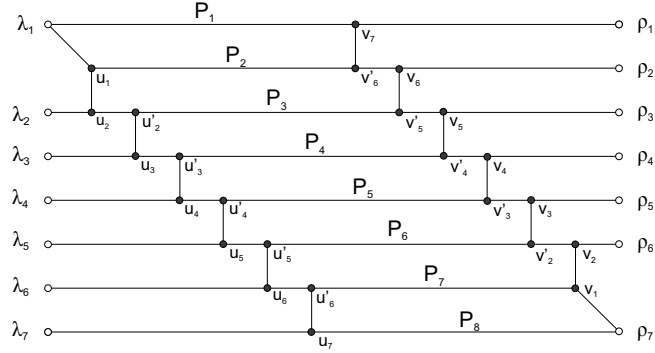}
\end{center}
\caption{A staircase tile $S_{8}$. The wall vertices are drawn in white and internal vertices in black.}
\label{fig:staircaseTile}
\end{figure}
\vspace*{-3mm}%
%STAIRCASE TILE
\begin{definition}[\cite{cit:avgcrit}]
\label{df: staircaseTile}
Let $n\in \NN$, $n\geq 3$. The \DEF{staircase tile} of width $n$ is a tile $S_{n}=(G, \lambda, \rho)$ with 
$\lambda = (\lambda_1, \lambda_2, \ldots, \lambda_{n-1})$, $\rho = (\rho_1, \rho_2, \ldots, \rho_{n-1})$ for which the following holds:

	\begin{itemize}
	\item $S_{n}$ consists of a sequence of traversing paths $\cP=\left\{P_1, P_2, \ldots, P_{n}\right\}$ with the property:
	\begin{itemize}
	\item $\lambda(P_1)=\lambda_{1}$, $\rho(P_1)=\rho_{1}$ and $\lambda(P_n)=\lambda_{n-1}$, $\rho(P_n)=\rho_{n-1}$,
    \item for $i=2, \ldots, n-1$, $\lambda(P_i)=\lambda_{i-1}$ and $\rho(P_i)=\rho_{i}$.
	 
\end{itemize}
	
\item The only non-wall vertices of $S_n$ are, for $i=1,n-1$, $u_i\in P_{i+1}$, $v_{i}\in P_{n-i}$, and, for $i = 2, \ldots, n-2$, $u_i, u'_i \in P_{i+1}$ and $v_i,v'_i\in P_{n-i}$.

\item For $i = 2, 3, \ldots, n-2$, the position of a non-wall vertices of $S_{n}$ is such that $\left|e(u_i)\cap e(\lambda_{i}) \right|=1$, $\left|e(u_i)\cap e(u'_i) \right|=1$, $\left|e(v_i)\cap e(\rho_{n-i}) \right|=1$ and $\left|e(v_i)\cap e(v'_i) \right|=1$.

\item The additional edges are $u_1u_2$, $v_1v_2$ and, for $i=2, \ldots, n-2$, $u'_{i}u_{i+1}$ and $v_{i}v'_{i+1}$.
\end{itemize}
\end{definition}
For $n\geq 3$, a staircase tile $S_n$ is a perfect planar tile. 
\begin{definition}[\cite{cit:avgcrit}]
\label{df: staircaseSequence}
Let $n\geq 3$ be integer and let $m\geq 3$ be an odd integer. The \DEF{staircase sequence} of length $m$ is defined as $\cS_{n,m}=(S_n, {}^{\updownarrow}S_n{}^{\updownarrow}, S_n, {}^{\updownarrow}S_n{}^{\updownarrow}, \ldots, {}^{\updownarrow}S_n{}^{\updownarrow}, S_n)$ and the \DEF{staircase strip graph} as $S(n,m)=\circ (\cS_{n,m}{}^{\updownarrow})$.
\end{definition}

\begin{proposition}[\cite{cit:avgcrit}]
\label{pr: staircase}
The staircase strip graph, $S(n,m)$, of width n and odd length $m\geq 4\binom{n}{2}-5$ is a crossing-critical graph with $\crn(S(n,m))=\binom{n}{2}-1$.
\end{proposition}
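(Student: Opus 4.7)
The plan is to invoke Corollary~\ref{co: criticalGraph} with $k=\binom{n}{2}-1$. Provided we can show that the staircase sequence $\cS_{n,m}$ is a $k$-critical sequence of tiles, both desired conclusions follow in one stroke: the sequence is cyclically-compatible because every tile has walls of size $n-1$, and the hypothesis $m\geq 4k-2 = 4\binom{n}{2}-6$ is implied by the assumption $m\geq 4\binom{n}{2}-5$ (which is moreover odd, as required by Definition~\ref{df: staircaseSequence}). Corollary~\ref{co: criticalGraph} then yields $k$-crossing-criticality of $S(n,m)$, while the matching upper bound supplied by Corollary~\ref{co: planarity} delivers the exact value $\crn(S(n,m))=k$. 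Hence the whole proof reduces to verifying $k$-criticality of $\cS_{n,m}$ in the sense of Definition~\ref{def:criticalSeq}.

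\textbf{Degeneracy of the staircase tile.} It is stated right after Definition~\ref{df: staircaseTile} that $S_n$ (and analogously ${}^\updownarrow S_n{}^\updownarrow$) is perfect and planar. To establish $k$-degeneracy it remains to show $\tcrn(S_n^{\updownarrow}-e) < k$ for every edge $e$. I would exhibit an explicit tile drawing of $S_n^{\updownarrow}$ in which the $n$ traversing paths are routed in the unit square so that their right-wall endpoints appear in reverse order, every unavoidable pair crosses exactly once, and one crossing is saved on account of the pair of paths sharing a common wall vertex at the top (resp.\ bottom) of the staircase. The result is a tile drawing with exactly $\binom{n}{2}-1$ crossings. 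Then, for any edge $e$, removing the (at least one) crossing involving a path through $e$ yields a drawing of $S_n^{\updownarrow}-e$ with at most $k-1$ crossings, as required.

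\textbf{Lower bound on the joined cut sequences.} For each cut index $i\in\{0,\ldots,m-1\}$ set $T_i := \otimes((\cS_{n,m}/i)^{\updownarrow})$. Using perfectness of each staircase tile (which supplies the pairs of vertex-disjoint paths connecting corresponding wall vertices) and concatenating such paths through all the successive staircases with their alternating orientations, one obtains $n$ pairwise vertex-disjoint traversing paths of $T_i$. The additional right-wall inversion built into the twist operation makes any two of these long paths land on the right wall of $T_i$ in reversed order, so they form a twisted pair in the sense of Lemma~\ref{le: twisted}. After discarding the one pair spoiled by the wall-vertex sharing at the ends of the staircases, we obtain a twisted family $\cW$ of $\binom{n}{2}-1$ pairwise edge-disjoint pairs of disjoint traversing paths, and Lemma~\ref{le: twisted} gives $\tcrn(T_i)\geq|\cW| = k$, completing condition~(b) of $k$-criticality.

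The main obstacle is the last step: organizing the $\binom{n}{2}-1$ long paths so that the resulting twisted pairs are simultaneously edge-disjoint across the full joined tile $T_i$. Here the alternating orientations ${}^\updownarrow S_n{}^\updownarrow$ in the sequence are crucial, because they cause the staircase paths to ``unbraid'' at each interface between consecutive tiles and hence keep the concatenated long paths vertex-disjoint; and perfectness of each $S_n$ provides the local flexibility needed to reroute a path segment whenever two intended choices would share an edge at a wall identification. Once this bookkeeping is in place, $\cS_{n,m}$ is verified to be $k$-critical, and a single application of Corollary~\ref{co: criticalGraph} yields the proposition.
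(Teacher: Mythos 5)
The paper does not actually prove Proposition~\ref{pr: staircase}; it imports it from \cite{cit:avgcrit}. Your reconstruction uses the right skeleton (reduce everything to $k$-criticality of $\cS_{n,m}$ and invoke Corollary~\ref{co: criticalGraph}, with Corollary~\ref{co: planarity} for the exact value), and the bookkeeping about cyclic compatibility and $m\geq 4k-2$ is fine. But both substantive steps have genuine gaps.

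The more serious one is the lower bound. You claim that after ``discarding the one pair spoiled by the wall-vertex sharing'' you are left with $\binom{n}{2}-1$ disjoint twisted pairs. There are \emph{two} spoiled pairs, not one: by Definition~\ref{df: staircaseTile}, $P_1$ and $P_2$ share a wall vertex ($\lambda(P_1)=\lambda(P_2)=\lambda_1$) and $P_{n-1}$ and $P_n$ share another ($\rho(P_{n-1})=\rho(P_n)=\rho_{n-1}$), and under the alternation $S_j\mapsto S_{n+1-j}$ across consecutive inverted tiles these two deficiencies propagate to exactly the two long pairs $\{\bar S_1,\bar S_2\}$ and $\{\bar S_{n-1},\bar S_n\}$. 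So Lemma~\ref{le: twisted} yields only $\binom{n}{2}-2=k-1$, one short of what you need (already visible for $n=3$, where the twisted pairs give $1$ but $k=2$). The missing crossing is forced by the ``step'' edges of the staircase and requires a separate, more delicate argument; the paper itself signals this by listing staircase strips as a gadget \emph{distinct} from twisted pairs (``staircase strips of width $n$, guaranteeing $\binom{n}{2}-1$ crossings'').

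The degeneracy step is also incomplete. The staircase tile contains the additional step edges $u_1u_2$, $v_1v_2$, $u_i'u_{i+1}$, $v_iv_{i+1}'$, which lie on no traversing path, so for such an edge $e$ there is no ``crossing involving a path through $e$'' to remove, and deleting $e$ leaves every twisted pair intact; you must instead exhibit an explicit tile drawing of $S_n^{\updownarrow}-e$ with at most $k-1$ crossings that genuinely exploits the absence of that step. Even for edges on traversing paths, deleting a crossing is not an operation on drawings: the broken path must be rerouted, and one has to check the remaining forced crossings still number at most $k-1$. These per-edge redrawings (one family per edge type, exactly as the paper does for $H_{\ell,n}$ in the proof of Theorem~\ref{thm:constrodd}) are the content of the criticality claim and cannot be waved away.
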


This concludes our discussion of known results on tiles in graphs. 
Tiled graphs are joined
together using zip product construction \cite{cit:zipprod,cit:avgcrit}. 
We use the version restricted to vertices of degree three, as introduced 
in \cite{cit:newinfcrit}.
\begin{definition}
\label{df:zip}
For $i\in \left \{ 1,2 \right\}$, let $G_i$ be a graph and let 
$v_i \in V(G_i)$ be its vertex of degree 3, such that $G_i-v_i$ is connected and $v_i$ is incident only to simple edges. 
We denote the neighbours of $v_i$ by $u_j^i$ for $j\in  \left \{ 1,2,3 \right\}$. 
The zip product of $G_1$ and $G_2$ according to vertices $v_1$, $v_2$ and their 
neighbours, is obtained from the disjoint union of $G_1-v_1$ and $G_2-v_2$ by 
adding three edges $u_1^1u_1^2$, $u_2^1u_2^2$, $u_3^1u_3^2$. 
\end{definition}

While crossing number is super-additive over general zip products only under a 
technical connectivity condition, the following theorem holds for zip products 
of degree (at most) three:
\begin{theorem}[\cite{cit:zip3}]\label{thm:zip3}
Let $G$ be a zip product of $G_1$ and $G_2$ as in Definition \ref{df:zip}. 
Then, $\crn(G)=\crn(G_1)+\crn(G_2)$. 
Consequently, if, for $i=1,2$, $G_i$ is $k_i$-crossing-critical,
then $G$ is $(k_1+k_2)$-crossing-critical.
\end{theorem}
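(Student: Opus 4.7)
The plan is to prove the additivity $\crn(G) = \crn(G_1) + \crn(G_2)$ and then derive $(k_1+k_2)$-criticality from it. For the upper bound $\crn(G) \leq \crn(G_1) + \crn(G_2)$, I would take optimal drawings $D_1$ and $D_2$ of $G_1$ and $G_2$ in disjoint disks. Because $v_i$ has degree exactly three, its rotation in $D_i$ is one of only two cyclic orders; by reflecting $D_2$ if necessary, we can make the rotations at $v_1$ and $v_2$ compatible. Then deleting each $v_i$ (leaving three stubs at $u_j^i$) and routing the three bridging edges $u_j^1 u_j^2$ between the two disks yields a drawing of $G$ introducing no new crossings, for a total of exactly $\crn(G_1) + \crn(G_2)$.

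The hard direction, and the main obstacle, is the reverse inequality. Given an optimal drawing $D$ of $G$ with $c$ crossings, I would extract drawings $\tilde D_i$ of $G_i$ for $i=1,2$ with $\crn(\tilde D_1) + \crn(\tilde D_2) \leq c$. The plan is to use the three zip-edges $f_j = u_j^1 u_j^2$ themselves as routes for the edges $v_i u_j^i$: choose a point $p_i$ near which to place $v_i$, route each edge $v_i u_j^i$ along the subcurve of $f_j$ from $u_j^i$ towards $p_i$, and close up the three half-edges at $p_i$ by a small local deformation. Every crossing in $D$ not involving a zip-edge lies entirely within $G_1 - v_1$ or within $G_2 - v_2$ and is inherited by exactly one $\tilde D_i$; a crossing on $f_j$ is inherited by $\tilde D_i$ whenever it occurs on the $u_j^i$-side of the split point. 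The subtle step is to choose $p_1, p_2$ so that the local bundling at each $v_i$ introduces no new crossings and no crossing of $D$ is double-counted. This is precisely where the degree-three hypothesis is essential: with only three half-edges to cluster at each $v_i$, there is enough local freedom to do so without obstruction, whereas higher-degree zip products would require an extra connectivity hypothesis to control the rotation.

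Given additivity, crossing-criticality follows by a case analysis on the removed edge $e$. Assume $G_i$ is $k_i$-crossing-critical, so $\crn(G) = k_1 + k_2$. If $e \in E(G_i - v_i)$, say $i=1$, then $G - e$ is itself the zip product of $G_1 - e$ and $G_2$; criticality of $G_1$ gives $\crn(G_1 - e) < k_1$, and additivity applied to $G-e$ yields $\crn(G-e) < k_1 + k_2$. If $e = u_j^1 u_j^2$ is one of the three zip-edges, I consider the subgraphs $G_i - v_i u_j^i$, in which $v_i$ now has degree two and may be suppressed without affecting the crossing number. A slight modification of the upper-bound construction, combining optimal drawings of these two smaller graphs and bridging across only through the two remaining zip-edges, gives $\crn(G - e) \leq \crn(G_1 - v_1 u_j^1) + \crn(G_2 - v_2 u_j^2) < k_1 + k_2$, again by criticality of the $G_i$. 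Thus $\crn(G - e) < \crn(G)$ for every edge $e$, and $G$ is $(k_1 + k_2)$-crossing-critical.
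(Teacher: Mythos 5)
The first thing to note is that the paper does not prove this statement at all: Theorem~\ref{thm:zip3} is imported as a black box from~\cite{cit:zip3}, so there is no internal proof to compare against, and your attempt must be judged on its own merits. Two of your three pieces are sound. The upper bound is the standard argument (invert each optimal drawing so that $v_i$ lies on the outer face, truncate the three edge-stubs at the boundary of a disk, and match the two rotations, which is always possible for degree three after a reflection). The derivation of $(k_1+k_2)$-criticality from additivity is also correct: for $e\in E(G_i-v_i)$ you only need the upper-bound direction applied to the zip product of $G_i-e$ with $G_{3-i}$ (so it does not matter that $(G_i-e)-v_i$ may be disconnected), and for a zip edge the two-edge variant of the merging construction works.

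The lower bound, however, has a genuine gap, and it sits exactly where the whole difficulty of the theorem lies. In an optimal drawing $D$ of $G$ the three zip edges $f_1,f_2,f_3$ are disjoint curves that need not pass through any common region of the plane; after you cut each $f_j$ at a split point, the three cut ends destined to become the edges at $v_1$ can be far apart, separated by arbitrary portions of the drawing. Joining them to a single point $p_1$ is therefore not a ``small local deformation'': it requires three new arcs threaded through the rest of $D$, and every crossing of those arcs is a crossing of $\tilde D_1$ that your accounting never pays for. The known proofs resolve this by routing the three half-edges for $v_1$ along a tripod inside the drawing of $G_2-v_2$ (three paths from $u_1^2,u_2^2,u_3^2$ to a common vertex, pairwise disjoint except at that vertex, which exists precisely because $G_2-v_2$ is connected -- a hypothesis your lower-bound sketch never invokes), followed by a delicate charging argument ensuring the crossings incurred by the two tripods are paid for by the crossings of $D$ between the two sides and are not counted twice. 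Your remark that ``with only three half-edges there is enough local freedom'' is not a substitute for this; the role of degree three is not local freedom at $v_i$ but the guaranteed existence of such a tripod (and the triviality of matching two $3$-element rotations), which is why higher-degree zip products need an extra connectivity condition. Relatedly, your claim that every crossing not involving a zip edge lies entirely within one $G_i-v_i$ overlooks crossings between an edge of $G_1-v_1$ and an edge of $G_2-v_2$; discarding them is harmless to the inequality as you state it, but these mixed crossings are exactly the budget the correct proof spends on the tripod routing, so they cannot simply be ignored.
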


%%%%%%%%%%%%%%%%%%%%%%%%%%%%%%%%%%%%%%%%%%%%%%%%%%%%%%%%%%%%%%%%%%%%%%%%%%%

\section{Crossing-Critical Families with High Odd Degrees}
\label{sc:construction} 

We first present a new construction of a
crossing-critical family containing many vertices of an arbitrarily
prescribed odd degree (recall that the question of an existence of such
families has been the main motivation for this research).

\begin{figure}[!ht]
\centering
\vspace*{3mm}%
\includegraphics[width=0.5\hsize]{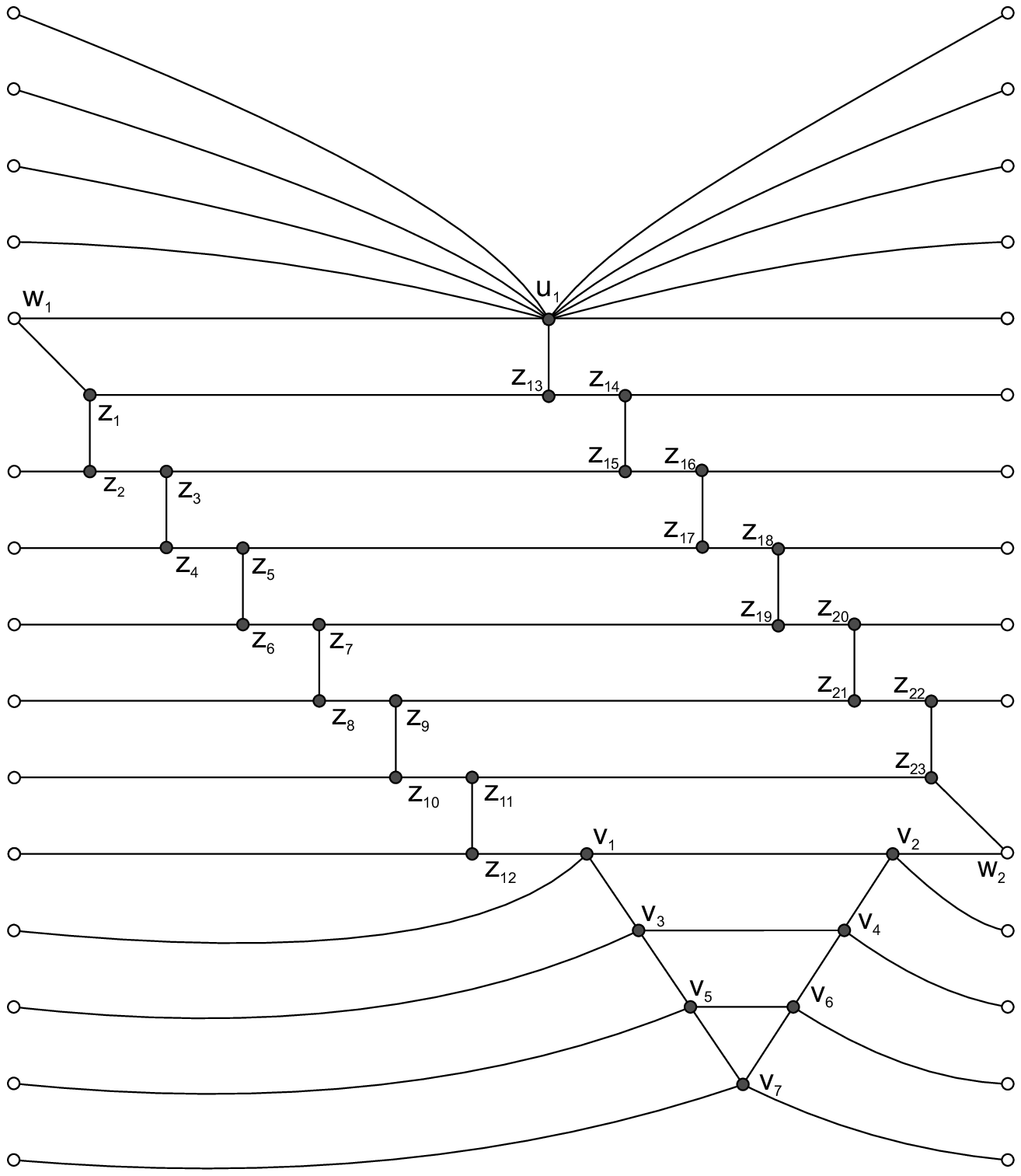}
\hbox{\vbox to 72mm{\hsize=5mm
$P'_4$\par\vspace{1mm} $P'_3$\par\vspace{1mm}$P'_2$\par\vspace{1mm}$P'_1$\par\vspace{0.5mm}
$S'_1$\par\vspace{0.8mm}$S'_2$\par\vspace{0.8mm}
$S'_3$\par\vspace{0.8mm}$S'_4$\par\vspace{0.8mm}
$S'_5$\par\vspace{0.8mm}$S'_6$\par\vspace{0.8mm}
$S'_7$\par\vspace{0.8mm}$S'_8$\par\vspace{1mm}
$Q'_1$\par\vspace{1mm} $Q'_2$\par\vspace{1mm}$Q'_3$\par\vspace{1mm}$Q'_4$
}}%
\caption{A tile drawing of $H_{4,8}$. The wall vertices are drawn in white and internal vertices in black.}
\label{fig:tileHln}
\end{figure}

The construction defines a graph $G(\ell,n,m)$ with three integer parameters
$\ell\geq 1$, $n \geq 3$ and odd $m\geq3$, as follows. 
There is a tile $H_{\ell,n}$, with the walls of size $2\ell+n-1$,
which is illustrated in Figure~\ref{fig:tileHln}. 
Formally, $H_{\ell,n}$ consists of $2\ell+n$ pairwise edge disjoint paths,
grouped into three families $P_1',\ldots,P_\ell'$,
$Q_1',\ldots,Q_\ell'$, and $S_1',\ldots,S_n'$,
and an additional set $F'$ of $2(n-2)$ edges not on these paths, see Figure~\ref{fig:tileHln}. We continue with more detailed definition:
\begin{itemize}
\item
The paths $S_1',\ldots,S_n'$ are pairwise vertex-disjoint except that
$S_1'$ shares one vertex with $S_2'$ ($w_1$ in Figure~\ref{fig:tileHln}) and $S_{n-1}'$ shares one vertex with $S_n'$ ($w_2$ in Figure~\ref{fig:tileHln}).
The additional $2$ edges of $F'$ appear between vertices of paths: $S_{2}'$ and $S_3'$ (edge $z_1z_2$ in Figure~\ref{fig:tileHln}), $S_{1}'$ and $S_2'$ (edge $u_1z_{2(n-1)-1}$ in Figure~\ref{fig:tileHln}). If $n>3$, then, for $i=2, \ldots, n-2$, additional $n-3$ edges of $F'$ appear between vertices of paths $S_{i+1}'$ and $S_{i+2}'$ (edges $z_{2i-1}z_{2i}$ in Figure~\ref{fig:tileHln}) and additional $n-3$ edges of $F'$ appear between vertices of paths $S_{i}'$ and $S_{i+1}'$ (edges $z_{2(n-3)+2i}z_{2(n-3)+2i+1}$ in Figure~\ref{fig:tileHln}).
\item
The union $S_1'\cup\ldots\cup S_n'\cup F'$ is (consequently) a subdivision
of the aforementioned staircase tile from Definition~\ref{df: staircaseTile}.
\item
Let $\lambda=(\lambda_{1}, \ldots, \lambda_{2\ell+n-1})$ be the left wall and $\rho=(\rho_{1}, \ldots, \rho_{2\ell+n-1})$ the right wall of $H_{\ell,n}$. The paths $P_1',\ldots, P_\ell'$ are ordered such that, for $i=1, \ldots, n$, $\lambda(P_i)=\lambda_{\ell+1-i}$ and $\rho(P_i)=\rho_{\ell+1-i}$. The paths $Q_1',\ldots, Q_\ell'$ are ordered such that, for $i=1, \ldots, n$, $\lambda(Q_i)=\lambda_{\ell+n-1+i}$ and $\rho(P_i)=\rho_{\ell+n-1+i}$.
\item 
The paths $P_1',\ldots,P_\ell'$ all share the top-most vertex $u_1$ of
$S_1'$.
\item
The paths $Q_1'$ and $S_n'$ shares exactly two vertices of degree~$4$ ($v_1$ and $v_2$ in Figure~\ref{fig:tileHln}).
For $i=1,\ldots,\ell-2$, $Q_i'$ shares exactly two vertices
with $Q_{i+1}'$ and these shared
vertices are of degree~$4$,
as depicted in
Figure~\ref{fig:tileHln} (vertices $v_{i+2},\ldots,v_{2\ell-2}$). The paths $Q_{\ell-1}'$ and $Q_\ell'$ shares exactly one vertex of degree~$4$ ($v_{2\ell-1}$ in Figure~\ref{fig:tileHln}). 
\end{itemize}

\begin{figure}[!t]
\centering
\includegraphics[width=0.93\hsize]{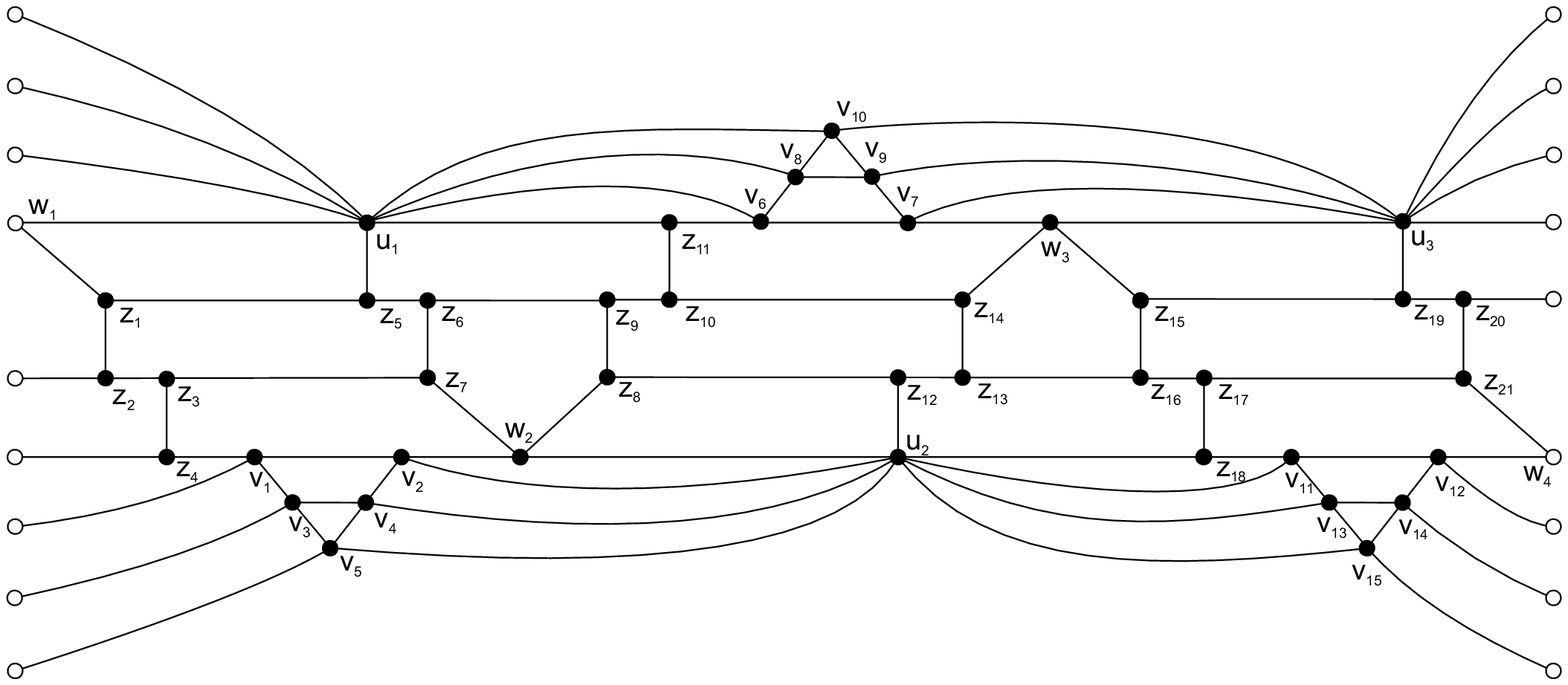}
\hbox{\vbox to 50mm{\hsize=5mm
$P''_3$\par\vspace{1.3mm}$P''_2$\par\vspace{1.3mm}$P''_1$\par\vspace{1.3mm}
$S''_1$\par\vspace{1.3mm}$S''_2$\par\vspace{2.3mm}
$S''_3$\par\vspace{1.8mm}$S''_4$\par\vspace{1.3mm}
$Q''_1$\par\vspace{1.3mm}$Q''_2$\par\vspace{1.3mm}$Q''_3$}}%
\caption{A tile drawing of $G_{3,4}$. The wall vertices are drawn in white and internal vertices in black.}
\label{fig:tileGln}
\end{figure}

Let $G_{\ell,n}$ be a tile composed of three copies of $H_{\ell,n}$ such that $G_{\ell,n}=H_{\ell,n}\otimes
	{}^{\updownarrow}H_{\ell,n}{}^{\updownarrow}\otimes H_{\ell,n}$, which is illustrated in Figure~\ref{fig:tileGln}.
Let $\cG(\ell,n,m)=(G_{\ell,n}, {}^{\updownarrow}G_{\ell,n}{}^{\updownarrow}, G_{\ell,n}
\ldots ,{}^{\updownarrow}G_{\ell,n}{}^{\updownarrow}, G_{\ell,n})$ be a sequence of
such tiles of length $m$, and let
$G(\ell,n,m)$ be constructed as the cyclization
$\circ\big(\cG(\ell,n,m)\,^{\updownarrow}\big)$.

In the degenerate case of $\ell=0$, the graph $G(0,n,m)$ is a staircase strip graph (see Definition~\ref{df: staircaseSequence}),
and $G(0,n,m)$ will be contained in $G(\ell,n,m)$ as a subdivision for every~$\ell$.

For $i=1,\ldots, \ell$ and $j=1,\ldots, n$, let $P_i'',Q_i'',S_j''$ denote the paths obtained by three copies of each of $P_i',Q_i',S_i'$ from $H_{\ell,n}$ as $P_i''=\otimes (P'_i, Q'_i, P'_i)$, $Q_i''=\otimes (Q'_i, P'_i, Q'_i)$ and $S_j''=\otimes (S'_j, S'_{n+1-j}, S'_j)$.
Then $P_1'',\ldots,P_\ell''$, $Q_1'',\ldots,Q_\ell''$, and
$S_1'',\ldots,S_n''$ are all traversing paths of $G_{\ell,n}$.\\
For $i=1, \ldots, \ell$ and $j=1,\ldots, n$, let $\bar{P_i},\bar{Q}_i,\bar{S}_i$ denote the paths which are obtained by $m$ copies of each of $P_i'',Q_i'',S_i''$ from $G_{\ell,n}$ as $\bar{P_i}=\otimes (P''_i, Q''_i, P''_i, \ldots, Q''_i, P''_i)$, $\bar{Q_i}=\otimes (Q''_i, P''_i, Q''_i, \ldots, P''_i, Q''_i)$ and $\bar{S_j}=\otimes (S''_j, S''_{n+1-j}, S''_j, \ldots, S''_{n+1-j}, S''_j )$.
Then $\bar{P_i},\bar{Q}_i,\bar{S}_i$ are all traversing paths of
$\otimes\cG(\ell,n,m)$.

The proof of the following basic properties is straightforward,
as attentive reader could easily verify from the illustrating pictures of $H_{\ell,n}$
(recall that degree-$2$ vertices are contracted in a tile join).

\begin{proposition}\label{pro:degreesodd}
For every $\ell\geq1$ and $n \geq 3$,
the tiles $H_{\ell,n}$, and hence also $G_{\ell,n}$, are perfect planar tiles.
The graph $G(\ell,n,m)$ has $3m(2\ell+4n-8)$ vertices,
out of which $3m\cdot2\ell$ have degree~$4$,~
$3m(4n-9)$ have degree~$3$, and remaining $3m$ vertices have degree~$2\ell+3$.
The average degree of $G(\ell,n,m)$ is 
$$\frac{5l+6n-12}{l+2n-4}\,. \eqno{\qed}$$
\end{proposition}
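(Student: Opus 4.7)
The plan is to verify the asserted properties by direct inspection of the explicit structural description of $H_{\ell,n}$ and Figure~\ref{fig:tileHln}, and then propagate them through the tile operations to $G_{\ell,n}$ and the cyclization $G(\ell,n,m)$.

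\emph{Planarity.} Planarity of $H_{\ell,n}$ is witnessed by the tile drawing in Figure~\ref{fig:tileHln} itself, which contains no crossings. Since the join of planar tiles drawn in unit squares can be placed side by side without introducing crossings, $G_{\ell,n}=H_{\ell,n}\otimes {}^{\updownarrow}H_{\ell,n}{}^{\updownarrow}\otimes H_{\ell,n}$ is planar as well.

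\emph{Perfectness of $H_{\ell,n}$.} The wall sizes agree, $|\lambda|=|\rho|=2\ell+n-1$, by construction. Removing either wall leaves the paths $P_i'$, $Q_i'$, $S_j'$ (minus their endpoints) glued together through the shared vertices $u_1$, $w_1$, $w_2$, and $v_1,\ldots,v_{2\ell-1}$, which yields a connected graph. Each wall vertex has, via its defining traversing path, a route to the opposite wall that avoids all other vertices of its own wall. The one condition that requires genuine work is the pairwise linkage: for every $i<j$ one needs two vertex-disjoint paths, one from $\lambda_i$ to $\rho_i$ and one from $\lambda_j$ to $\rho_j$. The natural traversing paths collide at $u_1$ (for two $P$-paths) and at the $v$-vertices (for two $Q$-paths), so in each such pair one path must be rerouted. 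I would handle this by a case analysis on the three regions (within $P$, within the staircase, within $Q$, or mixed) in which the indices $i,j$ lie: the $F'$-edges incident to the top of $S_1'$ allow one $P$-path to be detoured through the top of the staircase, and $S_n'$ together with the $Q$--$Q$ sharings allow a $Q$-path to be detoured underneath. This bookkeeping is the main technical obstacle.

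\emph{Perfect planarity of $G_{\ell,n}$.} Both perfectness and planarity are preserved under the tile join (as used in \cite{cit:pltile,cit:avgcrit}): the join of planar tiles is planar, and the linkage witnesses for perfectness concatenate through the identified wall. Hence $G_{\ell,n}$ inherits both properties directly from $H_{\ell,n}$.

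\emph{Vertex and degree counts.} Each wall vertex of $H_{\ell,n}$ is the endpoint of exactly one traversing path and therefore has degree $1$ in $H_{\ell,n}$. After forming $G(\ell,n,m)$, wall vertices of adjacent copies are identified and become degree-$2$ vertices, which are then contracted by the definition of the join. Consequently, the vertices of $G(\ell,n,m)$ correspond bijectively to the non-wall vertices of the $3m$ copies of $H_{\ell,n}$, each keeping its degree inherited from $H_{\ell,n}$. Inspection of Figure~\ref{fig:tileHln} identifies, per copy: one vertex of degree $2\ell+3$, namely $u_1$, receiving $2\ell$ edges from the $P$-paths, $2$ edges from $S_1'$, and the one $F'$-edge incident to it; exactly $2\ell$ vertices of degree $4$, from the sharings among the $Q$-paths, the sharing between $Q_1'$ and $S_n'$, and the staircase junctions $w_1,w_2$; and $4n-9$ further internal vertices of degree $3$ lying on the staircase paths. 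Multiplying by $3m$ produces the stated totals. The average degree is then an immediate arithmetic check: $2|E|=3m\bigl(2\ell\cdot 4+(4n-9)\cdot 3+(2\ell+3)\bigr)=3m(10\ell+12n-24)$, which divided by $|V|=3m(2\ell+4n-8)$ gives $(5\ell+6n-12)/(\ell+2n-4)$.
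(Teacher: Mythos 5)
Your approach is exactly the paper's: the authors give no written proof at all for this proposition, stating only that it is ``straightforward'' to verify from the pictures of $H_{\ell,n}$ (with the reminder that degree-$2$ vertices are contracted in a tile join), so your inspection-based verification, including the observation that wall vertices disappear in the cyclization and that only non-wall vertices of the $3m$ copies survive, is precisely the intended argument and is in fact more explicit than what the paper records. You also correctly isolate the only genuinely nontrivial part (the disjoint-paths condition in the definition of perfectness, where the naive traversing paths collide at $u_1$ and at the $v$-vertices), which the paper does not discuss either.

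One bookkeeping point to recheck: your list of degree-$4$ sources --- the $Q$--$Q$ sharings ($2(\ell-2)+1=2\ell-3$ vertices $v_3,\dots,v_{2\ell-1}$), the $Q_1'$--$S_n'$ sharing ($v_1,v_2$), and the junctions $w_1,w_2$ --- sums to $2\ell+1$ distinct vertices if all are counted separately, whereas the proposition (and your stated total) requires exactly $2\ell$; since the $z$-vertices account for exactly the $4n-9$ degree-$3$ vertices and $u_1$ for the degree-$(2\ell+3)$ one, your enumeration is over-inclusive by one, and you should identify from the figure which of the listed vertices either coincides with another or fails to have degree $4$. The average-degree arithmetic at the end is correct given the stated counts.
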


We conclude with the main desired property of the graph $G(\ell,n,m)$.
\begin{theorem}\label{thm:constrodd}
Let $\ell\geq 1$, $n \geq 3$ be integers.
Let $k=(\ell^{2}+\binom{n}{2}-1+2\ell(n-1))$ and let $m\geq 4k-1$ be odd.
Then the graph $G(\ell,n,m)$ is $k$-crossing-critical.
\end{theorem}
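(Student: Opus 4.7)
\smallskip\noindent
\textbf{Proof plan.}
The plan is to invoke Corollary~\ref{co: criticalGraph} to the sequence $\cG(\ell,n,m)$, so the task reduces to verifying (i) each tile $G_{\ell,n}$ is $k$-degenerate, and (ii) $\cG(\ell,n,m)$ is $k$-critical in the sense of Definition~\ref{def:criticalSeq}, while the length condition $m\geq 4k-2$ is immediate from $m\geq 4k-1$. Proposition~\ref{pro:degreesodd} already supplies that $G_{\ell,n}$ (built from three copies of the perfect planar $H_{\ell,n}$) is perfect and planar, so the remaining content of $k$-degeneracy is the edge-deletion inequality $\tcrn(G_{\ell,n}^{\updownarrow}-e)<k$, and the remaining content of $k$-criticality is the lower bound $\tcrn(\otimes((\cG/i)^{\updownarrow}))\geq k$ for every $i$.

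For the lower bound I would apply Lemma~\ref{le: twisted} to a carefully chosen family $\cW$ of $k$ pairwise edge-disjoint pairs of disjoint traversing paths, built from the global paths $\bar P_1,\ldots,\bar P_\ell$, $\bar Q_1,\ldots,\bar Q_\ell$ and $\bar S_1,\ldots,\bar S_n$ of $\otimes\cG(\ell,n,m)$. In the tile $G_{\ell,n}$ these paths are arranged in the aligned order $P_\ell,\ldots,P_1,S_1,\ldots,S_n,Q_1,\ldots,Q_\ell$ on both walls, so after the $\updownarrow$-twist every ``aligned'' disjoint pair becomes twisted. I would then partition $\cW$ into three naturally disjoint groups whose sizes sum to $k$: (a) the $\ell^2$ pairs $\{\bar P_i,\bar Q_j\}$, which are disjoint because the only shared vertices within the $P$-family and within the $Q$-family lie inside individual paths; (b) the $\binom{n}{2}-1$ pairs obtained from the staircase subtile (which is a subdivision of $S_n$, so Proposition~\ref{pr: staircase} already yields this many forced crossings); and (c) the $2\ell(n-1)$ pairs of the form $\{\bar P_i,\bar S_j\}$ and $\{\bar Q_i,\bar S_j\}$, taken over suitable index ranges so that each $\bar S_j$ crosses each $\bar P_i$ and each $\bar Q_i$ exactly once. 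Edge-disjointness across the three groups follows because the $P/Q$-paths and $S$-paths use disjoint edge sets outside the specified shared wall vertices, and within each group one reuses the same verification already encoded in the corresponding known constructions.

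For the matching upper bound (hence also $k$-degeneracy) I would exhibit an explicit tile drawing of $G_{\ell,n}^{\updownarrow}$ with exactly $k$ crossings by reading off Figure~\ref{fig:tileGln}: the twist forces precisely the same three groups of crossings and no extra ones, so by Lemma~\ref{le: lema1} the cyclization $\circ(\cG^{\updownarrow})$ is drawn with $\leq k$ crossings and, combined with the lower bound, has crossing number exactly $k$. The edge-deletion statement then follows by a short case analysis on the edge type of $e\in E(G_{\ell,n}^{\updownarrow})$: every edge belongs to one of the $P$-, $Q$- or $S$-paths (or to the staircase $F'$-edges), and deleting it destroys exactly one of the forced twisted pairs in $\cW$, allowing a local reroute in the drawing above that saves at least one crossing, hence $\tcrn(G_{\ell,n}^{\updownarrow}-e)\leq k-1$. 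Together with (i) and (ii), Corollary~\ref{co: criticalGraph} delivers that $G(\ell,n,m)=\circ(\otimes\cG(\ell,n,m)^{\updownarrow})$ is $k$-crossing-critical.

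The main obstacle I expect is the bookkeeping for the lower bound: exhibiting exactly $k$ twisted pairs that are simultaneously \emph{vertex-disjoint within each pair} and \emph{pairwise edge-disjoint across $\cW$}, in spite of the many shared vertices of degree~$4$ on the $Q$-side and the shared apex $u_1$ of the $P$-family. The natural bookkeeping is to route each $\bar P_i$ and $\bar Q_j$ so as to avoid all but the one promised shared endpoint-vertex with its partner, using the two internally disjoint $\lambda$-to-$\rho$ connections granted by perfection of $H_{\ell,n}$, and to use the staircase paths only along edges not yet assigned to $P$- or $Q$-pairs; once this routing is fixed, the count matches $\ell^2+\binom{n}{2}-1+2\ell(n-1)=k$ on the nose.
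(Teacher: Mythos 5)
Your overall architecture is the paper's: reduce the theorem to (i) a lower bound on the tile crossing number of the twisted join and (ii) per-edge optimal redrawings of an explicit $k$-crossing tile drawing, then assemble via the tile machinery (you route through Corollary~\ref{co: criticalGraph}, the paper through Theorem~\ref{th: theorem1} together with the symmetry of $G_{\ell,n}=H_{\ell,n}\otimes{}^{\updownarrow}H_{\ell,n}{}^{\updownarrow}\otimes H_{\ell,n}$; these are interchangeable). Your groups (a) and (c) are exactly the paper's twisted families $\cA$, $\cB$, $\cC$ of sizes $\ell^2$, $\ell(n-1)$, $\ell(n-1)$.

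The genuine gap is in how you account for the staircase's $\binom{n}{2}-1$ crossings. You propose to place ``$\binom{n}{2}-1$ pairs obtained from the staircase subtile'' into the single family $\cW$ and invoke Lemma~\ref{le: twisted} once for all of $\cW$. This cannot work: the staircase part has only $n$ traversing paths $\bar S_1,\dots,\bar S_n$, of which $\{\bar S_1,\bar S_2\}$ and $\{\bar S_{n-1},\bar S_n\}$ are not vertex-disjoint, so at most $\binom{n}{2}-2$ disjoint twisted pairs are available there, and Lemma~\ref{le: twisted} alone leaves you one crossing short of $k$; the value $\binom{n}{2}-1$ in Proposition~\ref{pr: staircase} is not a twisted-family count. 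The fix (and the paper's argument) is to keep the two bounds separate and show they are additive: every crossing forced by $\cA\cup\cB\cup\cC$ involves an edge of $R=\bar P_1\cup\dots\cup\bar P_\ell\cup\bar Q_1\cup\dots\cup\bar Q_\ell$, while $\otimes\cG(\ell,n,m)-E(R)$ still contains a subdivision of the staircase strip $\otimes\cG(0,n,m)$, so Proposition~\ref{pr: staircase} supplies $\binom{n}{2}-1$ further crossings involving no edge of $R$. Your appeal to ``edge-disjointness across the three groups'' gestures at this but is not the right invariant: what is needed is that the two classes of crossings are supported on complementary edge sets, not that the paths themselves are edge-disjoint. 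Beyond this, your criticality step compresses a substantial case analysis (the paper needs four separate optimal drawings, with alternative dotted routings, to exhibit a crossing on every edge type of $H_{\ell,n}$), but that is a matter of detail rather than of idea.
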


\begin{proof}
By using Theorem~\ref{th: theorem1} and symmetry, 
it suffices to prove the following:
\begin{enumerate}[I)]
\item $\tcrn\big(\!\otimes\cG(\ell,n,m)\,^{\updownarrow}\big)\geq k$, and
\item every edge of $G_{\ell,n}$ corresponding to one copy of
	$H_{\ell,n}$ in it is {\em critical}, meaning that,
	for every edge $e\in E(H_{\ell,n})\subseteq E(G_{\ell,n})$,
	$\tcrn(G_{\ell,n}{}^{\updownarrow}-e) <k$.
\end{enumerate}

Recall the pairwise edge-disjoint traversing paths $\bar{P_1},\ldots,\bar{P_\ell}$, 
$\bar{Q_1},\ldots,\bar{Q_\ell}$, and $\bar{S_1},\ldots,\bar{S_n}$ of the join
$\otimes\cG(\ell,n,m)$.
We define the following disjoint sets of pairs of these paths, 
such that each pair is formed by {\em vertex-disjoint} paths:
\begin{itemize}
\item $\cA=\left\{ \{\bar{P_i}, \bar{Q_j} \}: 1\leq i,j \leq \ell \right\}$
where $|\cA|=\ell^2$,
\item $\cB=\left\{ \{\bar{P_i}, \bar{S_j} \}: 1\leq i\leq \ell, 1<j\leq n \right\}$
where $|\cB|=\ell(n-1)$,
\item $\cC=\left\{ \{\bar{Q_i}, \bar{S_j} \}: 1\leq i\leq \ell, 1\leq j< n \right\}$
where $|\cC|=\ell(n-1)$.
\end{itemize}

Each pair in $\cA\cup\cB\cup\cC$ is twisted in 
$\otimes\cG(\ell,n,m)^{\updownarrow}$, and so these pairs account for at least
$|\cA|+|\cB|+|\cC|=2\ell(n-1)+\ell^2$ crossings in a tile drawing of
$\otimes\cG(\ell,n,m)^{\updownarrow}$, by Lemma~\ref{le: twisted}.
Importantly, each of these crossings involves at least one edge of
$R=\bar{P_1}\cup\ldots\cup \bar{P_\ell}\cup \bar{Q_1}\cup\ldots\cup \bar{Q_\ell}$.
The subgraph $\otimes\cG(\ell,n,m)-E(R)$ contains a subdivision of the staircase
strip $\otimes\cG(0,n,m)$.

\begin{figure}[!ht]
\begin{center}
\vspace*{-3mm}%
\includegraphics[width=0.45\hsize]{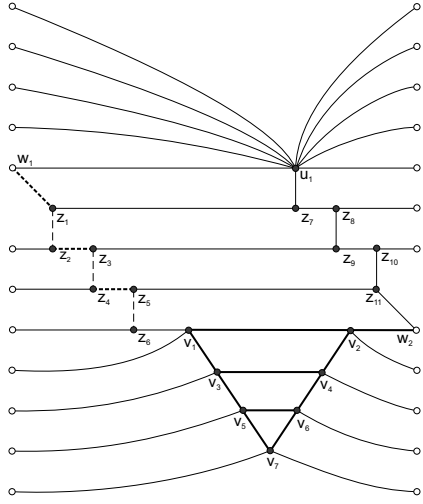}
\end{center}
\caption{A tile drawing of $H_{4,5}$ depicted with $4$ types of edges.}
\label{fig:CCEdges}
\end{figure}

Hence any tile drawing of $\otimes\cG(\ell,n,m)^{\updownarrow}$
contains at least another $\tcrn\big(\!\otimes\cG(0,n,m)^{\updownarrow}\big)$ 
crossings not involving any edges of $R$.
Since $\tcrn\big(\!\otimes\cG(0,n,m)^{\updownarrow}\big)=\binom{n}{2}-1$
by Proposition \ref{pr: staircase}, we get
$\tcrn\big(\!\otimes\cG(\ell,n,m)^{\updownarrow}\big)\geq
	\binom{n}{2}-1 + 2\ell(n-1)+\ell^2=k$,
thus proving (I).

\smallskip
To prove (II), we investigate the tile drawing in
Figure~\ref{fig:CriticalNew1}.
It is routine to count that a natural generalization of this drawing has
precisely $\binom{n-2}{2}+2(n-2)+2\ell(n-1)+\ell^2=k$ crossings,
and so it is optimal. Three types of crossings are presented in Figure~\ref{fig:CriticalNew1}, Figure~\ref{fig:CriticalNew2}, Figure~\ref{fig:CriticalNew3} and Figure~\ref{fig:CriticalNew4}:

\begin{itemize}
	\item grey triangles are the $\ell^2$ crossings of each pair in $\cA$;
	\item grey $4$-stars are the $2\ell(n-1)$ crossings of each pair in $\cB \cup \cC$;
	\item grey squares are the $\binom{n-2}{2}+2(n-2)=\binom{n}{2}-1$ crossings of edges in a staircase part, $G(0,n,m)$, of a graph $G(\ell,n,m)$. The $\binom{n-2}{2}$ crossings appear in the middle of Figure~\ref{fig:CriticalNew1} caused by edges, for $i=1, \ldots, n-2$, $z_{2i-1}z_{2n+2i-5}$. The $n-2$ crossings are caused by edge $w_1u_1$ and $n-2$ crossings by edge $z_{2n-4}w_2$.
\end{itemize}

\begin{figure}[!ht]
\begin{center}
\vspace*{3mm}%
\includegraphics[width=0.78\hsize]{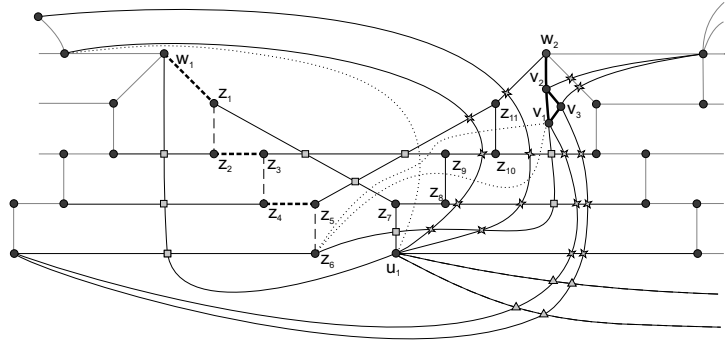}
\end{center}
\caption{A fragment of an optimal tile drawing of $G_{2,5}{}^{\updownarrow}$. Grey triangles, $4$-stars and squares present three types of crossings. For clearer presentation we use different $\ell$ compared to Figure~\ref{fig:CCEdges}. Here $\ell=2$ and $n=5$, which means that an optimal drawing has $29$ crossings: $\ell^2=4$ of triangles, $2\ell(n-1)=16$ of $4$-stars and $\binom{n}{2}-1=9$ of squares. Dotted lines show other possible renderings for certain edges. Note that we only change a single edge in each alternative drawing. The same observation holds for the following two figures.}
\label{fig:CriticalNew1}
\end{figure}
\begin{figure}[!ht]
\begin{center}
\vspace*{5mm}%
\includegraphics[width=0.78\hsize]{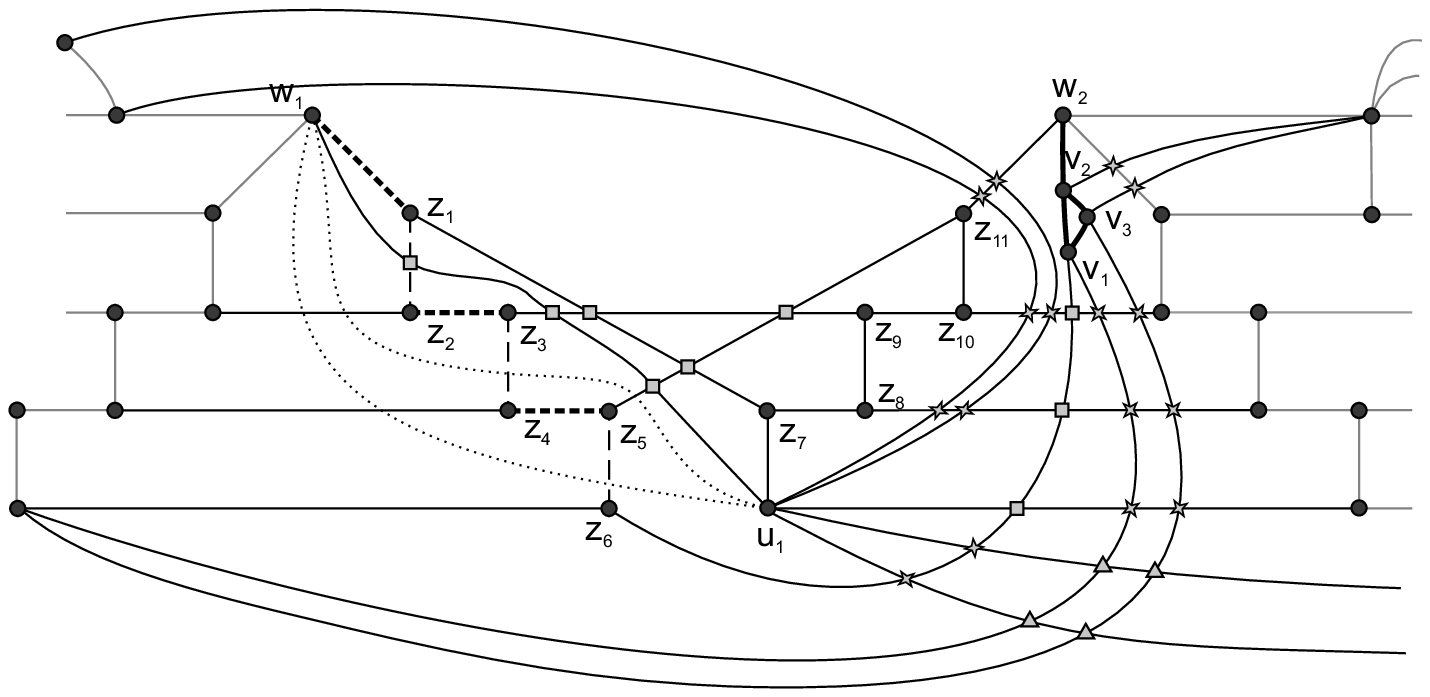}
\end{center}
\caption{A fragment of an optimal tile drawing of $G_{2,5}{}^{\updownarrow}$. See Figure~\ref{fig:CriticalNew1} for additional explanation.}
\label{fig:CriticalNew2}
\end{figure}
\begin{figure}[!ht]
\begin{center}
\includegraphics[width=0.8\hsize]{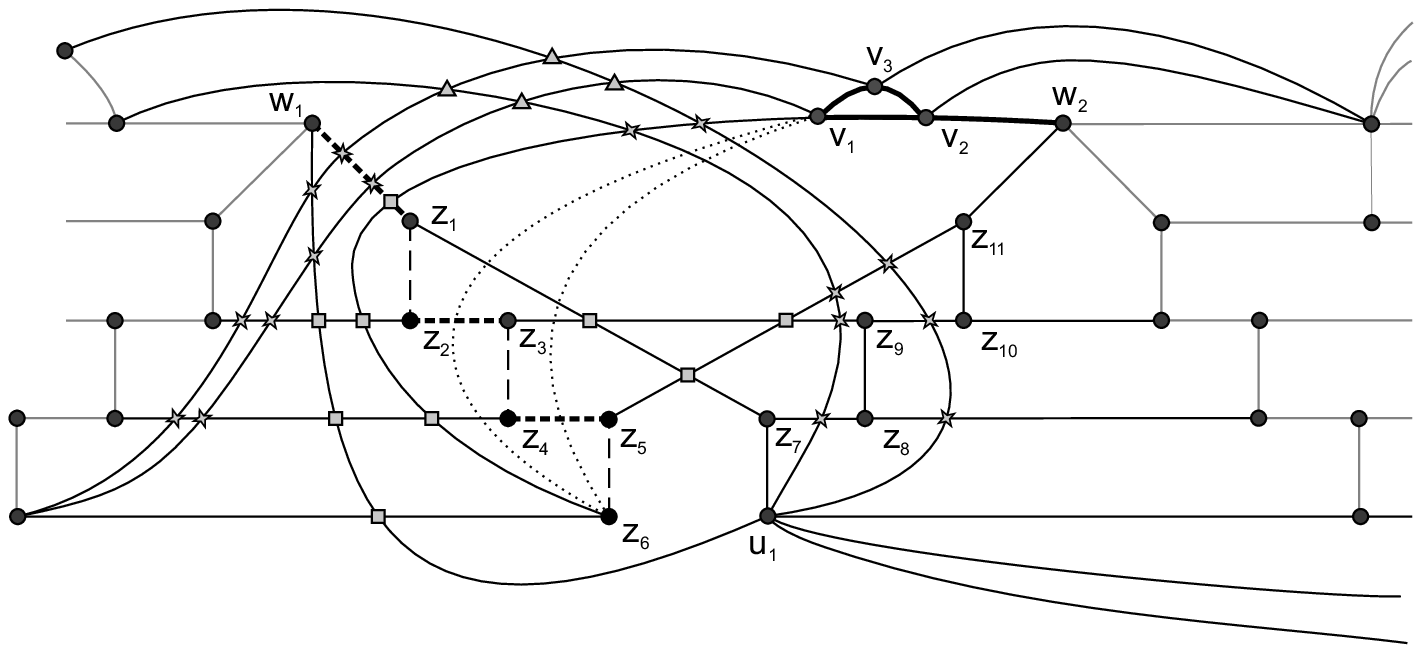}
\end{center}
\caption{A fragment of an optimal tile drawing of $G_{2,5}{}^{\updownarrow}$. See Figure~\ref{fig:CriticalNew1} for additional explanation.}
\label{fig:CriticalNew3}
\end{figure}
\begin{figure}[!ht]
\begin{center}
\vspace*{6mm}%
\includegraphics[width=0.8\hsize]{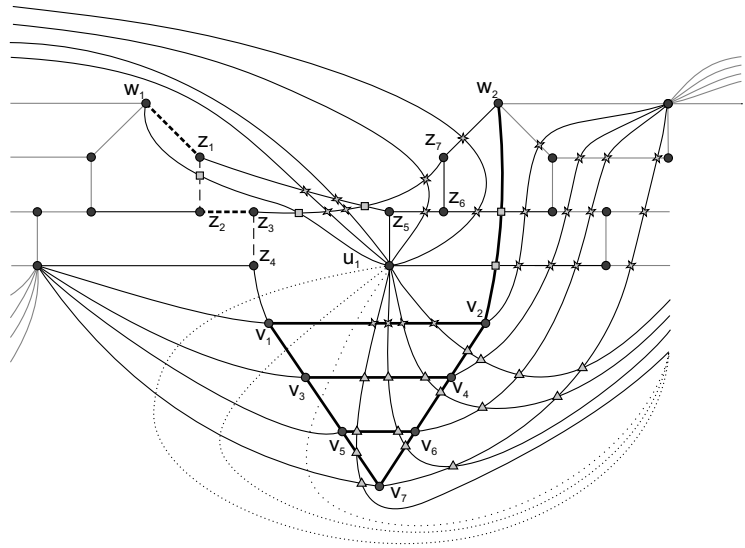}
\end{center}
\caption{A fragment of an optimal tile drawing of $G_{4,4}{}^{\updownarrow}$. Here $\ell=n=4$, which means that an optimal drawing has $45$ crossings: $\ell^2=16$ of triangles, $2\ell(n-1)=24$ of $4$-stars and $\binom{n}{2}-1=5$ of squares. For clearer presentation we use different $n$ compared to Figure~\ref{fig:CCEdges}. See Figure~\ref{fig:CriticalNew1} for additional explanation.}
\label{fig:CriticalNew4}
\end{figure}

To show that each edge of $H_{\ell, n}$ is critical, we first present $H_{\ell, n}$ with 4 types of edges (see Figure~\ref{fig:CCEdges}): thin solid, thin dashed, thick dashed and thick solid. Then we present four figures, where each figure focuses its attention on one special type of edges: Figure~\ref{fig:CriticalNew1} for thin solid edges, Figure~\ref{fig:CriticalNew2} for thin dashed edges, Figure~\ref{fig:CriticalNew3} for thick dashed edges and Figure~\ref{fig:CriticalNew4} for thick solid edges. Every edge of a given type is crossed in the appropriate figure. Sometimes, some edge $e$ is used in different optimal drawings to cross different ones of the edges of the specific type. The required optimal redrawings of such edges $e$ are indicated with dotted curves. 
\qed\end{proof}

%%%%%%%%%%%%%%%%%%%%%%%%%%%%%%%%%%%%%%%%%%%%%%%%%%%%%%%%%%%%%%%%%%%%%%%%%%% 

\section{Families with Prescribed Frequent Degrees}
\label{sc:freq}

We now get back to the primary question which motivated the research leading to
\cite{cit:newinfcrit} and this paper:
which vertex degrees other than $3,4,6$ can occur arbitrarily often in infinite
$k$-crossing-critical families?
First, we summarize the relevant particular constructions---our future building
blocks---obtained so far
(note that some of the claimed results have been proved in a more general
form than stated here, but we state them right in the form we shall use).

\begin{proposition}\label{prop:critconstructions}
There exist (infinite) families $\cF$ of simple, $3$-connected,
$k$-crossing-critical graphs such that, in addition, the following holds:
\begin{enumerate}[a)]
\item\label{it:even46}
(\cite[Section~4]{cit:newinfcrit}.)
For every $k\geq10$ or odd $k\geq5$, and every rational $r\in(4,6-\frac8{k+1})$,
a~family $\cF$ which is $\{4,6\}$-max-universal and each member of $\cF$ is of
average degree exactly~$r$, and another $\cF$ which is $\{4\}$-max-universal and of
average degree exactly~$4$.
Every graph of the two families has the set of its vertex degrees equal to
$\{3,4,6\}$ (e.g., degree $3$ repeats six times in each).
\item\label{it:even4high}
(\cite[Section~3~and~4]{cit:newinfcrit}.)
For every $\varepsilon>0$, any integer $k\geq5$ and every set $D_e$ of even integers
such that $\min(D_e)=4$ and $6\leq\max(D_e)\leq2k-2$,
a~family $\cF$ which is $D_e$-max-universal, and each graph of $\cF$
has the set of its vertex degrees $D_e\cup\{3\}$ and is of
average degree from the interval $(4,4+\varepsilon)$.
\item\label{it:3to3}
(\cite{cit:kochol} for $k=2$ and \cite{cit:avgcrit} for general $k$, see $G(0,n,m)$.)
For every $k=\binom{n}{2}-1$ where $n\geq3$ is an integer,
a family $\cF$ which is $\{3,4\}$-max-universal 
and each member of $\cF$ is of average degree equal to $3+\frac1{4n-7}$.
\item\label{it:anyodd}
($G(\ell,3,m)$ in Theorem~\ref{thm:constrodd}.)
For every $k=\ell^2+4\ell+2$ where $\ell\geq1$ is an integer,
a family $\cF$ which is $\{3,4,2\ell+3\}$-max-universal
and each member of $\cF$ is of average degree $5-\frac4{\ell+2}$.
\end{enumerate}
\end{proposition}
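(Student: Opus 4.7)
The plan is to dispatch the four items separately. Items (a), (b), and (c) are previously established constructions from the literature, while (d) is an immediate specialization of the new Theorem~\ref{thm:constrodd}.

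For (a) and (b), I would invoke the constructions of \cite[Sections~3 and~4]{cit:newinfcrit}, which already produce the claimed $\{4,6\}$- and $D_e$-max-universal $k$-crossing-critical families by tile joins combined with zip products at degree-$3$ vertices. What remains is bookkeeping: confirm that the stated ranges of~$k$ (namely $k\geq 10$ or odd $k\geq 5$ for (a), and $k\geq 5$ with $\max(D_e)\leq 2k-2$ for (b)), the average-degree intervals, and the explicit degree sets listed here all match the output of those constructions. Simplicity and $3$-connectivity are inherited from the perfect tile structure together with the fact that the zip product of Definition~\ref{df:zip} acts on degree-$3$ vertices and only adds three simple edges.

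For (c), the required family is $\{G(0,n,m)\}$ with odd $m\geq 4\binom{n}{2}-5$, which coincides with the staircase strip graph $S(n,m)$ of Definition~\ref{df: staircaseSequence}; crossing-criticality with crossing number $\binom{n}{2}-1$ is then Proposition~\ref{pr: staircase}. A direct degree count on the cyclized staircase (where all wall vertices are absorbed by the degree-$2$ contraction rule in the join) shows that only the two ``corner'' non-wall vertices of each copy of $S_n$ have degree $4$, while all other non-wall vertices have degree $3$, yielding the average degree $3+\frac{1}{4n-7}$ and $\{3,4\}$-max-universality. For (d), I specialize Theorem~\ref{thm:constrodd} and Proposition~\ref{pro:degreesodd} to $n=3$: the crossing number is $\ell^2+\binom{3}{2}-1+2\ell\cdot 2 = \ell^2+4\ell+2$, and $G(\ell,3,m)$ has $6m\ell$ vertices of degree~$4$, $9m$ of degree~$3$, and $3m$ of degree~$2\ell+3$, giving average degree $\frac{5\ell+6}{\ell+2} = 5-\frac{4}{\ell+2}$ and $\{3,4,2\ell+3\}$-max-universality, since each of these three degree classes grows linearly in~$m$ and no other degree appears.

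The main obstacle, shared across the items, is the uniform verification of max-universality: one must check that no degrees outside the prescribed set~$D$ sneak in after the tile-join contractions of degree-$2$ vertices, and, where applicable, after the zip-product identifications, and that each target degree really does occur with frequency growing linearly in the length parameter. Once these accountings are in place, the criticality itself is furnished case by case by Corollary~\ref{co: criticalGraph} for the pure tile constructions, Theorem~\ref{thm:zip3} for the zip-product compositions, Proposition~\ref{pr: staircase}, and the new Theorem~\ref{thm:constrodd}.
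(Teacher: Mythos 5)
Your proposal matches the paper exactly in spirit: the paper gives no separate proof of this proposition at all---it is stated purely as a summary, with items (a)--(c) justified by the citations embedded in the statement and item (d) by specializing Proposition~\ref{pro:degreesodd} and Theorem~\ref{thm:constrodd} to $n=3$, and your arithmetic for (d) ($k=\ell^2+4\ell+2$, average degree $\frac{5\ell+6}{\ell+2}$) is correct. One small slip in your bookkeeping for (c): in the cyclized staircase the degree-$4$ vertices are not ``corner'' non-wall vertices (all non-wall vertices of $S_n$ have degree $3$, each lying on a traversing path and carrying exactly one additional edge); rather, exactly one \emph{wall} vertex per junction survives the degree-$2$ suppression with degree $4$ (where the two path-ends at $\rho_{n-1}$ meet the two path-ends at $\lambda_1$ of the inverted neighbour), giving one degree-$4$ vertex per $4n-7$ vertices and hence the stated average $3+\frac1{4n-7}$; your own accounting would instead yield $3+\frac1{2n-4}$. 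This does not affect the conclusion, since (c) is in any case imported from the cited work.
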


Having the particular constructions of
Proposition~\ref{prop:critconstructions} and the zip product
with Theorem~\ref{thm:zip3} at hand, it is now quite easy to give the 
``ultimate'' combined construction as follows.
For two graph families $\cF_1,\cF_2$ of simple $2$-connected graphs such that
each graph in $\cF_1\cup\cF_2$ has a vertex of degree~$3$,
we define the {\em zip product} of $\cF_1$ and $\cF_2$ as the family
of all graphs $H$ such that there exist $G_1\in\cF_1$, $G_2\in\cF_2$
and vertices $v_1\in V(G_1)$, $v_2\in V(G_2)$ of degree~$3$,
and $H$ is the zip product of $G_1$ and $G_2$ according to~$v_1,v_2$.

\begin{lemma}\label{lem:univzipsum}
Let $\cF_i$, $i=1,2$, be a $D_i$-max-universal family of simple
$2$-connected graphs such that each graph in $\cF_i$ has a vertex of
degree~$3$.
Then the zip product of $\cF_1$ and $\cF_2$ is a 
$(D_1\cup D_2)$-max-universal family.
\end{lemma}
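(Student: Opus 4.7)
The plan rests on one clean observation: zipping at degree-$3$ vertices exactly preserves the degrees of all remaining vertices. In the construction of Definition~\ref{df:zip}, each of the three neighbours of $v_i$ loses its incidence to $v_i$ but simultaneously gains one new incidence to the corresponding neighbour of $v_{3-i}$; every other vertex of $G_i-v_i$ is untouched. Writing $n_d(G)$ for the number of degree-$d$ vertices of a graph $G$, this yields the formula
$$n_d(H) \;=\; n_d(G_1) + n_d(G_2) - 2\,[d=3]$$
for every zip product $H$ of $G_1\in\cF_1$ and $G_2\in\cF_2$, the correction accounting for the two removed vertices $v_1,v_2$, both of degree~$3$.

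With this formula in hand, the three clauses in the definition of $(D_1\cup D_2)$-max-universality become a bookkeeping exercise. For $(D_1\cup D_2)$-universality, given $m$ I would invoke $D_i$-universality of $\cF_i$ to obtain $G_i\in\cF_i$ with at least $m+2$ vertices of each degree in $D_i$, pick a degree-$3$ vertex $v_i\in V(G_i)$ (which exists by hypothesis), and form the zip product $H$; the displayed formula then gives $n_d(H)\geq m$ for every $d\in D_1\cup D_2$. For the finite-exception condition, any degree appearing in some zip product must already appear in some member of $\cF_1$ or $\cF_2$, hence degrees outside $D_1\cup D_2$ lie in the finite union of the two exception sets guaranteed by the max-universality of $\cF_1$ and $\cF_2$. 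For the uniform bound, if $M_1,M_2$ are the bounds supplied by the max-universality of the two families, then the displayed formula gives $n_d(H)\leq M_1+M_2$ for every $d\notin D_1\cup D_2$ and every zip product $H$, so $M:=M_1+M_2$ works.

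I anticipate no genuine obstacle: essentially all the content is concentrated in the degree-preservation observation, after which the three conditions of max-universality fall out mechanically. The one small pitfall is to keep straight what max-universality actually asserts (namely, that the exceptional-degree bound is uniform across the whole family, not graph-by-graph), and to remember the small $+2$ buffer in the universality step that absorbs the two degree-$3$ vertices deleted during zipping. Once the explicit formula for $n_d(H)$ is written down, both issues become transparent.
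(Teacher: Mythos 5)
Your proof is correct and follows essentially the same route as the paper: both rest on the observation that zipping at degree-$3$ vertices preserves all remaining vertex degrees, so the degree counts are additive (minus the two deleted vertices), and the universality step uses the same small buffer to absorb them. If anything, your explicit verification of the finite-exception set and the uniform bound $M=M_1+M_2$ is more direct than the paper's, which argues the maximality clause via the contrapositive ``if some degree occurs arbitrarily often in the zip product, it already does so in $\cF_1$ or $\cF_2$.''
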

\begin{proof}
Let $\cF$ denote the zip product of $\cF_1$ and $\cF_2$.
We first prove that $\cF$ is $(D_1\cup D_2)$-universal.
Choose any set of integers $\dset{m_d}{d\in D_1\cup D_2}$,
and, for $i=1,2$, graphs $G_i\in\cF_i$, such that, for each $d\in D_i \backslash \left\{3\right\}$, $G_i$ contains at least $m_d$
vertices of degree $d$,
and, if~$3\in D_i$, $G_i$ has at least $m_3+1$ vertices of degree $3$.
Then, for each $d\in D_1\cup D_2$, the zip product of $G_1$ and $G_2$ (according to any pair of their
degree-$3$ vertices) has at least $m_d$  
vertices of degree $d$.

Conversely, assume that $\cF$ is $\{d\}$-universal for some integer~$d$.
Then, for every integer $m$, 
there exists $G\in\cF$ such that $G$ has at least
$2m$ vertices of degree~$d$.
Since $G$ is a zip product of graphs $G_i\in\cF_i$, $i=1,2$,
one of $G_1,G_2$ contains at least $m$ vertices of degree~$d$.
W.l.o.g., this happens infinitely often for $i=1$, and so (up to symmetry)
$\cF_1$ is $\{d\}$-universal.
Therefore, $d\in D_1\cup D_2$ which proves that $\cF$ is $(D_1\cup
D_2)$-max-universal.
\qed\end{proof}

\begin{theorem}
\label{thm:alluniversal}
Let $D$ be any finite set of integers such that $\min(D)\geq3$. 
Then there is an integer $K=K(D)$, such that for every $k\ge K$, 
there exists a $D$-universal family of simple, $3$-connected,
$k$-crossing-critical graphs.
Moreover, if either $3,4\in D$ or both $4\in D$ and $D$ contains
only even numbers, then there exists a $D$-max-universal such family.
All the vertex degrees in the families are from $D\cup\{3,4,6\}$.
\end{theorem}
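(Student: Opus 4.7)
The plan is to assemble the desired family by iterated zip product of the four building blocks listed in Proposition~\ref{prop:critconstructions}, invoking Lemma~\ref{lem:univzipsum} for max-universality bookkeeping and Theorem~\ref{thm:zip3} to add up crossing numbers.

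First I partition $D\setminus\{3,4\}$ into odd degrees $D_o=\{d\in D:d\geq 5,\ d\text{ odd}\}$ and even degrees $D_e=\{d\in D:d\geq 6,\ d\text{ even}\}$. For each $d=2\ell+3\in D_o$, I choose a member of Proposition~\ref{prop:critconstructions}(d) with parameter $\ell$, which is $\{3,4,d\}$-max-universal and $k_d$-crossing-critical with $k_d=\ell^2+4\ell+2$. These rigid contributions sum to $K_0=\sum_{d\in D_o}k_d$, and I still need a flexible padding block to absorb the remainder $k-K_0$. When $D$ contains an even degree $\geq 6$, or when $D$ is all-even with $4\in D$, I use Proposition~\ref{prop:critconstructions}(b) with $D_e'=\{4\}\cup D_e$, tuning its crossing-critical parameter to $k-K_0$. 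When $D\subseteq\{3,4\}\cup D_o$, I pad instead with Proposition~\ref{prop:critconstructions}(a)'s $\{4\}$-max-universal variant, available for every $k\geq 10$ and every odd $k\geq 5$. The degenerate subcase $D\subseteq\{3,4\}$ is handled by starting from a fixed small member of Proposition~\ref{prop:critconstructions}(c) and padding with~(a).

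Taking one graph from each chosen block and iteratively zip-producting them at degree-$3$ vertices (every block contains arbitrarily many such vertices), Lemma~\ref{lem:univzipsum} yields that the resulting family is $(\{3,4\}\cup D_o\cup D_e')$-max-universal, which equals $D$-max-universal exactly under the two hypotheses of the moreover clause; without them the family is at least $D$-universal, as claimed. Theorem~\ref{thm:zip3} gives that each resulting graph is $k$-crossing-critical; $3$-connectivity and simplicity are preserved because the zip at a degree-$3$ vertex with three distinct neighbours introduces only three new simple edges between previously disjoint graphs. Finally, the vertex-degree set of every constructed graph lies in $D\cup\{3,4,6\}$ since this holds for each building block individually and zip product deletes only the two degree-$3$ vertices while preserving every other degree.

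The main obstacle I anticipate is the arithmetic of hitting \emph{every} sufficiently large $k$: the odd-degree pieces contribute rigid amounts $k_d$, so the padding block must supply the exact remainder $k-K_0$ from within its admissible range of crossing numbers. This dictates $K(D)$ to equal $K_0$ plus the minimum admissible padding parameter, together with a small case analysis around values excluded by Proposition~\ref{prop:critconstructions}(a) (namely even $k\in\{2,4,6,8\}$) or by the threshold $\max D_e'\leq 2k-2$ in~(b). Once this bookkeeping is settled, the max-universality claim and the containment of vertex degrees in $D\cup\{3,4,6\}$ follow mechanically from Lemma~\ref{lem:univzipsum} and Theorem~\ref{thm:zip3}.
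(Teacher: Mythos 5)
Your overall strategy is the same as the paper's: decompose $D$, take one building block from Proposition~\ref{prop:critconstructions} per ``rigid'' odd degree, a flexible even-degree block to absorb the remaining crossing number, and combine everything by iterated zip product using Lemma~\ref{lem:univzipsum} and Theorem~\ref{thm:zip3}. However, there is a concrete gap in one branch of your case analysis. Consider $D=\{3,4,6\}$ (or any $D$ with $3\in D$ that contains an even degree $\geq 6$ but no odd degree $\geq 5$). Then $D_o=\emptyset$, $K_0=0$, and your construction reduces to the single block from Proposition~\ref{prop:critconstructions}\,\ref{it:even4high}) with $D_e'=\{4,6\}$. That family is only $\{4,6\}$-max-universal: its graphs have degree set $D_e'\cup\{3\}$, but degree $3$ occurs a bounded number of times (six, in the crossed-belt construction), and a zip product only \emph{destroys} degree-$3$ vertices. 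So the resulting family is not $\{3\}$-universal, hence neither $D$-max-universal nor even $D$-universal, although $3,4\in D$ puts this case squarely inside the ``moreover'' clause. Your assertion that Lemma~\ref{lem:univzipsum} yields a $(\{3,4\}\cup D_o\cup D_e')$-max-universal family tacitly assumes that some factor is $\{3\}$-max-universal, which fails exactly in this branch; relatedly, the parenthetical claim that ``every block contains arbitrarily many'' degree-$3$ vertices is false for the block from \ref{it:even4high}).

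The fix is what the paper does: whenever $3,4\in D$, additionally zip in the $\{3,4\}$-max-universal staircase family of Proposition~\ref{prop:critconstructions}\,\ref{it:3to3}) with $k=2$, which both restores $\{3\}$-universality of the product and supplies enough degree-$3$ vertices to carry out all the iterated zips. This costs only $+2$ in the total crossing number and does not disturb your padding arithmetic for hitting every $k\geq K$. With that block included in every branch where $3\in D$, the rest of your argument (criticality via Theorem~\ref{thm:zip3}, the degree set $D\cup\{3,4,6\}$, simplicity and $3$-connectivity, and the bookkeeping around the admissible range of the flexible block) goes through essentially as in the paper.
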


\begin{proof}
It suffices to prove the second claim ($D$-max-universal) since
a $(D\cup\{3,4\})$-max-universal family is also $D$-universal.
Furthermore, if $D$ contains only even numbers, then the claim has already
been proved in \cite{cit:newinfcrit}, here in
Proposition~\ref{prop:critconstructions}\,\ref{it:even4high}).

Hence assume the case $3,4\in D$, and let $D_e\subseteq D$
be the subset of the even integers from~$D$.
Let $\cF_e$ denote the family from
Proposition~\ref{prop:critconstructions}\,\ref{it:even4high}) with
$k_e=\frac12\max(D_e)+1$, and $\cF_3$ the family from
Proposition~\ref{prop:critconstructions}\,\ref{it:3to3}) with~$k=2$.
For every $a\in D\setminus D_e$, $a>3$, let $\cF_a$ denote the family from                                          
Proposition~\ref{prop:critconstructions}\,\ref{it:anyodd}) with~$2\ell_a+3=a$
and crossing number $k_a=\ell_a^2+4\ell_a+2$.
Since, in particular, $\cF_3$ is $\{3\}$-universal, 
we may assume that every graph in $\cF_3$ has more than $|D\setminus D_e|$
vertices of degree~$3$.
We now construct a family $\cF$ as the iterated zip product of $\cF_3$,
$\cF_e$, and (possibly) of each $\cF_a$ where $a\in D\setminus D_e$, $a>3$.

Clearly, every graph from $\cF$ is simple and $3$-connected.
By Lemma~\ref{lem:univzipsum}, $\cF$ is $D$-max-universal,
and by Theorem~\ref{thm:zip3}, $\cF$ is $K$-crossing-critical
where $K=k_e+2+\sum_{a\in D\setminus D_e,a>3}k_a$.
This construction creates only vertices of degrees from $D\cup\{3,4,6\}$.
To extend the construction of $\cF$ to any parameter $k>K$, 
we simply replace the family $\cF_e$ by analogous $\cF_e'$ from
Proposition~\ref{prop:critconstructions}\,\ref{it:even4high})
with the parameter $k_e'=k_e+(k-K)$.
\qed\end{proof}

\begin{figure}[th]
\begin{center}
\vspace*{-3mm}%
\includegraphics[width=0.9\hsize]{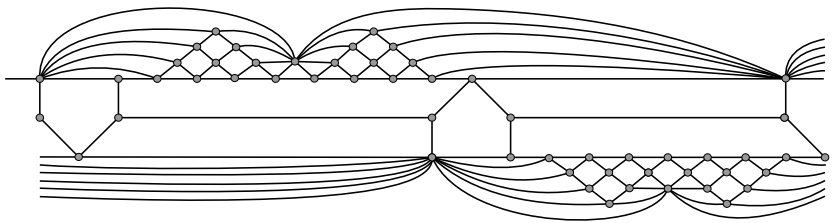}
\end{center}
\caption{A possible (alternative) way of combining the ideas of the construction
	\cite{cit:newinfcrit} with the tile $G_{5,3}$.}
\label{fig:NewGraphG}
\end{figure}
\vspace*{-3mm}%
At last we shortly remark that building blocks of the ``crossed belt'' construction of
\cite{cit:newinfcrit}
(Proposition~\ref{prop:critconstructions}\,\ref{it:even4high}) can be directly combined
with the new construction of $G(\ell,n,m)$, without invoking a zip product.
Such a combination is outlined in Figure~\ref{fig:NewGraphG}.
However, since this construction can only achieve a combination of various
even degrees with one prescribed odd degree (greater than~$3$), it cannot
fully replace the proof of Theorem~\ref{thm:alluniversal} and so we refrain from
giving the lengthy technical details in this paper.

%%%%%%%%%%%%%%%%%%%%%%%%%%%%%%%%%%%%%%%%%%%%%%%%%%%%%%%%%%%%%%%%%%%%%%%%%%% 

\section{Families with Prescribed Average Degree}
\label{sc:average}

In addition to Theorem~\ref{thm:alluniversal}, we are going to show that
the claimed $D$-max-universality property can be combined with nearly any
feasible rational average degree of the family.

\begin{theorem}\label{thm:alluniversal-avgdeg}
Let $D$ be any finite set of integers such that $\min(D)\geq3$
and $A\subset\mathbb R$ be an interval of reals.
Assume that at least one of the following assumptions holds:
\begin{enumerate}[a)]\parskip0pt
\item\label{it:avg346}
$D\supseteq\{3,4,6\}$ and $A=(3,6)$,
\item\label{it:avg34}
$D\supsetneq\{3,4\}$ and $A=(3,4]$,
or $D=\{3,4\}$ and $A=(3,4)$,
\item\label{it:avg35}
$D\supsetneq\{3,4\}$ and $A=(3,5-\frac8{b+1})$ where $b$ is the largest odd
number in~$D$ and $b\geq9$ (note that $b=7$ is covered in (\ref{it:avg34})),
\item\label{it:avgeven}
$D\supseteq\{4,6\}$ contains only even numbers and $A=(4,6)$, or $D=\{4\}$ and $A=\{4\}$.
\end{enumerate}
Then, for every rational $r\in A$,
there is an integer $K=K(D,r)$ such that for every $k\ge K$, 
there exists a $D$-max-universal family of simple $3$-connected
$k$-crossing-critical graphs of average degree precisely~$r$.
\end{theorem}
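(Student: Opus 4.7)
My strategy is to realize each required family as an iterated zip product of suitable building blocks from Proposition~\ref{prop:critconstructions}, tuning the multiplicities so that both the crossing number and the average degree come out exactly as prescribed. The key tools are Theorem~\ref{thm:zip3} (zip product adds crossing numbers and preserves criticality) and Lemma~\ref{lem:univzipsum} (zip product combines max-universality); the only thing not handled by these lemmas is the numerical calibration of the average degree.

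A short calculation shows that iterating the zip product of $s_1$ copies of a graph $G_1$ (with $n_1$ vertices, $e_1$ edges, average degree $d_1$, and crossing number $k_1$) and $s_2$ copies of $G_2$ (analogous parameters) along degree-$3$ vertices yields a graph on $N = s_1(n_1{-}2) + s_2(n_2{-}2) + 2$ vertices with $E = s_1(e_1{-}3) + s_2(e_2{-}3) + 3$ edges and crossing number $s_1 k_1 + s_2 k_2$. Requiring $2E = rN$ rewrites as the linear Diophantine equation
\[
  s_1\alpha_1 + s_2\alpha_2 \;=\; 2r - 6, \qquad \alpha_i := n_i(d_i - r) + 2(r-3).
\]
If we pick building blocks with $d_1 < r < d_2$ and $n_1, n_2$ large, then $\alpha_1 < 0 < \alpha_2$ and the equation admits infinitely many positive integer solutions; any gcd obstruction to solvability is cleared by adjusting the internal parameters (such as $n, m, \ell$) within each building-block family of Proposition~\ref{prop:critconstructions}. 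Simultaneously imposing $s_1 k_1 + s_2 k_2 = k$ adds a second linear constraint, which by Sylvester--Frobenius-type reasoning is solvable for all sufficiently large~$k$; this is what forces the lower bound $K(D,r)$.

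For each of the four cases I would pick appropriate primary mixing blocks. Case~(\ref{it:avgeven}) is immediate from items~\ref{it:even46} and~\ref{it:even4high} of Proposition~\ref{prop:critconstructions}. In case~(\ref{it:avg346}), for $r \in (4, 6)$ the backbone is~\ref{it:even46} and one zips in many copies of the $\{3,4\}$-family~\ref{it:3to3} to make degree~$3$ frequent; for $r \in (3, 4]$ one mixes~\ref{it:3to3} (average near $3$) with~\ref{it:even46} (average above~$4$). Case~(\ref{it:avg34}) is analogous, possibly with bounded-many feature blocks drawn from~\ref{it:anyodd} or~\ref{it:even4high} to supply the extra degrees of $D$; since these feature contributions are bounded, their effect on the average is absorbed by the leading linear equation. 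In case~(\ref{it:avg35}) the high-average mixing partner is the new family $G(\ell, 3, m)$ of Proposition~\ref{pro:degreesodd} with $2\ell{+}3 = b$, whose average $5 - \tfrac{8}{b+1}$ is precisely the supremum of~$A$.

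The main obstacle is the exact numerical calibration: I need to satisfy \emph{simultaneously} the average-degree equation, the crossing-number equation $\sum s_i k_i = k$, and the max-universality requirement that every $d \in D$ appears arbitrarily often in large members of the family. The internal freedom of each building-block family in Proposition~\ref{prop:critconstructions} (together with the new family of Section~\ref{sc:construction}) provides enough flexibility to reshape the coefficients $\alpha_i$ and to generate the infinitely many positive-integer solutions needed for an infinite family; the max-universality condition is then secured by driving each $s_i$ to infinity along that solution set while keeping the averages and the crossing number fixed.
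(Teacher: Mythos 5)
Your arithmetic for the effect of the zip product on vertex and edge counts is correct, and your Diophantine setup for the average degree is in the same spirit as the paper's Corollary~\ref{cor:composedegrees}. However, there is a genuine structural gap at the very end: you propose to secure $D$-max-universality ``by driving each $s_i$ to infinity along that solution set while keeping the averages and the crossing number fixed.'' This is impossible in your scheme. By Theorem~\ref{thm:zip3} the crossing number of the iterated zip product is $s_1k_1+s_2k_2$ with each $k_i\geq 2$, so for a fixed target $k$ only finitely many pairs $(s_1,s_2)$ are admissible, hence only finitely many graphs arise, and no finite family can be $D$-max-universal. Increasing the number of critical factors and holding the crossing number constant are mutually exclusive; the multiplicities $s_i$ cannot simultaneously govern the crossing number and supply unboundedly many vertices of each degree in $D$.

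The paper resolves exactly this tension by decoupling the two roles. The number of zip factors stays fixed (one factor per family plus $\ell$ copies of $K_{3,3}$ and one compensation gadget), which pins the crossing number at $k=\ell+5+k_1+\dots+k_t$; the infinitude and universality come instead from \emph{scaling each factor}, i.e., replacing $G_i$ by a larger member $G_i^{\,*a_i}$ of the same $k_i$-crossing-critical family with $|V(G_i^{\,*a_i})|=a_i|V(G_i)|$ and the \emph{same} average degree $r_i$. That the relevant families admit such scaling is not free --- it is the content of Lemma~\ref{lem:makescalable}, which requires revisiting the crossed-belt construction of \cite{cit:newinfcrit} and performing ``double split'' operations to keep the average degree invariant under scaling. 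The parameters $a_i$ then play the role your $s_i$ were meant to play in the average-degree equation \eqref{eq:avgclaim}, but without touching the crossing number, and the compensation gadget $M_m^{q+t}$ (Lemma~\ref{lem:compensatezip}) absorbs the $(-2\ \text{vertices},-6\ \text{degree sum})$ distortion per zip that you instead folded into your coefficients $\alpha_i$. To repair your argument you would need to (i) introduce and prove scalability of the building blocks, and (ii) rerun your Diophantine analysis with the scaling parameters $a_i$ rather than the copy counts $s_i$, dropping the second constraint $\sum s_ik_i=k$ entirely (the range $k\geq K$ is obtained by padding with $K_{3,3}$'s, not by varying multiplicities).
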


Before we prove the theorem, we informally review the coming steps.
The basic idea of balancing the average degree in a crossing-critical family
is quite simple; assume we have two families $\cF_a,\cF_b$ of fixed average
degrees $a<b$, respectively, and containing some degree-$3$ vertices.
Then, we can use zip product of graphs from the two families to obtain new
graphs of average degrees which are convex combinations of $a$ and $b$.
This simple scheme, however, has two difficulties:
\begin{itemize}
\item If one combines graphs $G_1\in\cF_a$ and $G_2\in\cF_b$,
then the average degree of the disjoint union $G_1\cup G_2$
is the average of $a,b$ weighted by the sizes of $G_1,G_2$.
Hence we need great flexibility in choosing members of $\cF_a,\cF_b$
of various size, and this will be taken care of by the notion of a 
{\em scalable family}.
\item Second, after applying a zip product of $G_1,G_2$ the resulting
average degree is no longer this weighted average of $a,b$ but a slightly
different rational number.
We will take care of this problem by introducing a special {\em compensation
gadget} whose role is to revert the change in average degree caused by the zip
product.
\end{itemize}

\begin{figure}[th]
\begin{center}
\includegraphics[width=0.5\hsize]{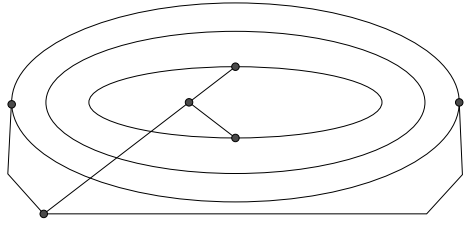}
\end{center}
\caption{The $k$-crossing-critical ``crossed belt'' construction of \cite{cit:newinfcrit}:
	the shaded part is any plane graph consisting of an edge-disjoint
	union of $k$ cycles, satisfying certain (rather weak) technical
	and connectivity conditions;
	the six marked vertices are all of degree three.}
\label{fig:crossedbelt}
\end{figure}

\begin{figure}[th]
\begin{center}
\includegraphics[width=0.25\hsize]{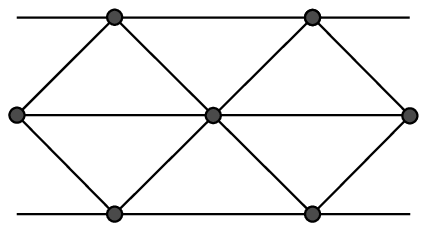}
\qquad\raise6ex\hbox{\Large$\leadsto$}\qquad
\includegraphics[width=0.25\hsize]{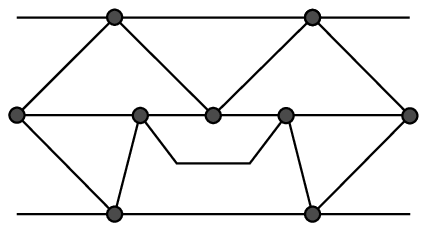}
\end{center}
\caption{The tile $T$ (left) used to construct our ``compensation gadget''
	$M_m$, and the tile $T''$ (called ``double-split'' in
	\cite{cit:newinfcrit}) that can replace $T$ in the compensation
	gadget.}
\label{fig:compensation}
\end{figure}

We start with addressing the second point.
The {\em compensation gadget} (one for a whole family) will be picked from
the family in Proposition~\ref{prop:critconstructions}\,\ref{it:even46}).
To describe it precisely, we have to (at least informally) introduce the
very general {\em crossed belt} construction of crossing-critical families
from~\cite{cit:newinfcrit}---see it is Figure~\ref{fig:crossedbelt}.
Let $T$ be the planar tile depicted in Figure~\ref{fig:compensation} on the
left, and let $M_m'$ be the planar graph obtained as the cyclization
$\circ(T_0,\dots,T_{m-1})$ where each $T_i=T$.
Let $M_m$, $m\geq12$, be constructed from $M_m'$ by adding six new degree-$3$ vertices 
and five new edges as in Figure~\ref{fig:crossedbelt}, such that four of
the new vertices subdivide rim edges of the tiles
$T_0,T_{\lfloor m/4\rfloor},T_{\lfloor m/2\rfloor},T_{\lfloor3m/4\rfloor}$.
Let $M_m^c$ be constructed exactly as $M_m$ but replacing arbitrary $c\geq0$ of
the tiles $T$ with $T''$ shown on the right in Figure~\ref{fig:compensation}.

\begin{proposition}[\cite{cit:newinfcrit}]
For any $m\geq12$ and $0\leq c\leq m$, the graph $M_m^c$ is $5$-crossing-critical.
\end{proposition}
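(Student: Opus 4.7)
The plan is to apply the tile-based framework of Section 2.3 together with a ``planar belt plus external crossings'' analysis in the style of \cite{cit:newinfcrit}. First I would verify the prerequisites for both component tiles: that $T$ and $T''$ are perfect planar tiles sharing a common wall pattern. Planarity is immediate from Figure \ref{fig:compensation}, and the four conditions of perfectness follow by direct inspection. Consequently the cyclization $M_m' = \circ(T_0, \ldots, T_{m-1})$ is planar for any admissible choice $T_i \in \{T, T''\}$, so every crossing of $M_m^c$ must involve one of the five added edges.

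The next step is the lower bound $\crn(M_m^c) \geq 5$. I would identify within $M_m^c$ five pairwise edge-disjoint twisted pairs of paths. The belt contributes five vertex-disjoint cyclic ``strands'' (the traversing paths of the tiles closed up by cyclization), and the five added edges each join two degree-$3$ subdivision vertices lying on opposite sides of some strand. After cutting the cyclization at one tile to reinterpret $M_m^c$ as a tile, these five pairings form a twisted family, so Lemma \ref{le: twisted} gives tile crossing number at least $5$; an adaptation of Theorem \ref{th: theorem1} (optimized for this specific construction in \cite{cit:newinfcrit}) then lifts this to $\crn(M_m^c) \geq 5$ for all $m \geq 12$. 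Since the argument depends only on the rim of the tiles, and $T$ and $T''$ agree on the rim, the bound is independent of the parameter $c$. For the matching upper bound, the drawing in Figure \ref{fig:crossedbelt} realizes exactly five crossings: draw $M_m'$ using the planar tile drawings and route each added edge across the belt with one crossing.

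The main obstacle will be the criticality claim $\crn(M_m^c - e) \leq 4$ for every edge $e$. I would partition the edges into three classes: \textbf{(a)} the five added edges, whose removal directly saves one crossing; \textbf{(b)} rim edges of the belt adjacent to a degree-$3$ subdivision vertex, for which rerouting the incident added edge along the vacated rim segment saves one crossing; \textbf{(c)} edges internal to some tile $T_i$, which require a local criticality property analogous to degeneracy (Definition \ref{def:criticalSeq})---each internal edge must admit a modified drawing of the tile minus that edge whose insertion into the global drawing eliminates at least one crossing. Class \textbf{(c)} is the technical heart of the proof, since it must be checked by case analysis for every internal edge of both $T$ and $T''$, and it is precisely here that $T''$ was engineered in \cite{cit:newinfcrit} to serve as a drop-in replacement preserving criticality while locally altering the vertex-degree distribution of $M_m^c$.
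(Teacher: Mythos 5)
First, note that the paper does not prove this proposition at all: it is imported verbatim from \cite{cit:newinfcrit} as a black-box ingredient of the compensation-gadget machinery, so there is no in-paper argument to compare yours against. Judged on its own merits, your proposal is an outline rather than a proof, and it contains at least two genuine gaps. The first is the assertion that, because the belt $M_m'$ is planar, ``every crossing of $M_m^c$ must involve one of the five added edges.'' Planarity of a subgraph says nothing about which edges participate in crossings of an arbitrary (even optimal) drawing of the whole graph; this statement is legitimate only as a description of one particular drawing (and is indeed how the upper bound of $5$ is realized), not as a structural fact you may lean on for the lower bound or for criticality.

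The second and more serious gap is the lower bound. The five added edges attach at six new vertices, only four of which subdivide rim edges, and those four sit on the widely separated tiles $T_0,T_{\lfloor m/4\rfloor},T_{\lfloor m/2\rfloor},T_{\lfloor 3m/4\rfloor}$. Cutting the cyclization at one tile therefore cannot turn all five added edges into traversing paths of a single linear tile, so the hypothesis of Lemma~\ref{le: twisted} (disjoint traversing paths between the two walls of one tile) is not met, and Theorem~\ref{th: theorem1} does not apply either, since the crossed-belt graph is not a cyclization of perfect planar tiles once the external vertices and edges are attached. The lower-bound argument in \cite{cit:newinfcrit} is of a genuinely different, non-tile nature: it exploits the fact that the belt is an edge-disjoint union of five cycles separating the attachment points of the added ``crossing'' structure, so that structure must cross each of the five cycles (or itself) in any drawing. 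Finally, the criticality verification for edges internal to $T$ and $T''$ --- which you correctly identify as the technical heart --- is deferred entirely to an unexecuted case analysis, so even granting the earlier steps, the proposal does not establish that $\crn(M_m^c-e)\le 4$ for such edges.
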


The way ``compensating by'' the gadget $M_m^c$ works, is formulated next.

\begin{lemma}\label{lem:compensatezip}
Let $G_1,\dots,G_t$ be graphs, each having at least two degree-$3$ vertices,
and let $q\in\mathbb N$ and $m\geq\max(q+t,12)$.
If $H$ is a graph obtained using the zip product of all $G_1,\dots,G_t$ and of
$M_m^{q+t}$ (in any order and any way), 
then the average degree of
$H$ is equal to the average degree of the disjoint union of $G_1,\dots,G_t$
and~$M_m^q$.
\end{lemma}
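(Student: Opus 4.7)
My plan is to reduce the average-degree identity to a bookkeeping of vertex and edge counts. Since the average degree of any graph equals $2|E|/|V|$, it suffices to verify that $|V(H)|$ and $|E(H)|$ coincide with the corresponding counts of the disjoint union $G_1\sqcup\cdots\sqcup G_t\sqcup M_m^q$. The whole argument rests on the observation that one zip product and one $T\mapsto T''$ replacement alter these counts by opposite amounts, so the $t$ zip products used to build $H$ will be neutralised exactly by the $t$ extra $T''$ tiles packed into $M_m^{q+t}$.

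First I would analyse a single zip product via Definition~\ref{df:zip}: it deletes two degree-$3$ vertices together with their six incident edges and inserts three matching edges, so the vertex count drops by $2$, the edge count by $3$, and all remaining vertex degrees are preserved. Assembling the $t+1$ graphs $G_1,\dots,G_t,M_m^{q+t}$ into a single graph $H$ requires precisely $t$ zip products, irrespective of order or of the particular pairs of degree-$3$ vertices chosen. Hence, starting from the disjoint union of these $t+1$ graphs, the transition to $H$ contributes $-2t$ vertices and $-3t$ edges. The hypothesis that each $G_i$ has at least two degree-$3$ vertices, together with $m\geq q+t$ giving plenty of degree-$3$ hooks in $M_m^{q+t}$, ensures that such a zipping scheme is always available.

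Next I would compare $M_m^{q+t}$ with $M_m^q$. By their construction both are built on the same cyclic tile sequence of length $m$, with the same six added degree-$3$ vertices, the same five crossing edges, and the same four subdivisions on the tiles $T_0,T_{\lfloor m/4\rfloor},T_{\lfloor m/2\rfloor},T_{\lfloor 3m/4\rfloor}$. They differ only in that $M_m^{q+t}$ replaces $t$ additional copies of $T$ by the double-split tile $T''$; the condition $m\geq q+t$ (and $m\geq12$ to keep the four subdivisions well separated) makes this possible. The heart of the argument is then a direct combinatorial check on Figure~\ref{fig:compensation}: one reads off from the two drawings that $T''$ has exactly $2$ more vertices and $3$ more edges than $T$. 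This single check is the step I expect to be the main (and essentially the only) obstacle, since it imports the precise drawing of the double-split tile from~\cite{cit:newinfcrit}; once accepted, everything else is straightforward counting.

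Combining the above gives $|V(M_m^{q+t})|=|V(M_m^q)|+2t$ and $|E(M_m^{q+t})|=|E(M_m^q)|+3t$. Substituting into the formulas for $|V(H)|$ and $|E(H)|$, the $\pm 2t$ and $\pm 3t$ terms cancel, yielding
$$
|V(H)| \;=\; \textstyle\sum_{i=1}^{t}|V(G_i)|+|V(M_m^q)|, \qquad
|E(H)| \;=\; \textstyle\sum_{i=1}^{t}|E(G_i)|+|E(M_m^q)|,
$$
which are precisely the vertex and edge counts of the disjoint union $G_1\sqcup\cdots\sqcup G_t\sqcup M_m^q$. The ratios $2|E|/|V|$ therefore agree, proving that $H$ has the same average degree as this disjoint union, as required.
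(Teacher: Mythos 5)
Your proof is correct and follows essentially the same route as the paper: the paper likewise counts vertices and degree sums (equivalently, edges), observes that each zip product removes $2$ vertices and $6$ from the degree sum while each $T\mapsto T''$ replacement adds $2$ vertices and $6$ to the degree sum, and cancels the $t$ zips against the $t$ extra double-split tiles. The only cosmetic difference is that the paper writes out the explicit counts $|V(M_m^q)|=6m+6+2q$ and degree sum $28m+18+6q$ rather than citing the figure for the $(+2,+3)$ vertex/edge increment of $T''$.
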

\begin{proof}
Let $n_i=|V(G_i)|$ and $s_i$ be the sum of degrees in~$G_i$, and let
$n_0=6m+6+2q$, $s_0=28m+18+6q$ be the same quantities in~$M_m^q$.
Then $n_0''=|V(M_m^{q+t})|=n_0+2t$ and the sum of degrees of $M_m^{q+t}$ is
$s_0''=s_0+6t$.
Since performing one zip operation decreases the number of vertices by $2$
and the sum of degrees by $6$, we have
$|V(H)|=n_0''+n_1+\dots+n_t-2t=n_0+n_1+\dots+n_t$
and the sum of degrees in $H$ is $s_0''+s_1+\dots+s_t-6t=s_0+s_1+\dots+s_t$,
and the claim follows.
\qed\end{proof}

To address the first point, we give the following definition.
A family of graphs $\cF$ is {\em scalable} if all the graphs in $\cF$ have
equal average degree and for every $G\in\cF$ and every integer $a$,
there exists $H\in\cF$ such that $|V(H)|=a|V(G)|$.
Furthermore, $\cF$ is {\em$D$-max-universal scalable} if, additionally,
$H$ contains at least $a$ vertices of each degree from $D$
and the number of vertices of degrees not in $D$ is bounded from above 
independently of~$a$.

Trivially, the families of
Proposition~\ref{prop:critconstructions}\,\ref{it:3to3}),\ref{it:anyodd})
are $D$-max-universal scalable for $D=\{3,4\}$ and $D=\{3,4,2\ell+3\}$,
respectively.
For the families as in
Proposition~\ref{prop:critconstructions}\,\ref{it:even46}),\ref{it:even4high}),
we have:

\begin{lemma}\label{lem:makescalable}
There exist families, satisfying the conditions of
Proposition~\ref{prop:critconstructions}\,\ref{it:even46}) and
\ref{it:even4high}), respectively,
which are $D$-max-universal scalable for their respective sets~$D$.
\end{lemma}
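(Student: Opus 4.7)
The strategy is to take, for each family $\cF$ from Proposition~\ref{prop:critconstructions}\,\ref{it:even46}) and~\ref{it:even4high}), a carefully chosen sub-family $\cF'$ whose set of achievable vertex counts is closed under multiplication by arbitrary positive integers, which is what the definition of \emph{scalable} demands.

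Recall the crossed belt constructions of \cite{cit:newinfcrit}: each member $G$ of $\cF$ is specified by a cyclic sequence of $m$ tiles---of which some $c$ are of a special ``double-split'' type $T''$ (cf.~Figure~\ref{fig:compensation}) used to adjust the average degree---together with a fixed crossing gadget of six extra degree-3 vertices and five new edges (Figure~\ref{fig:crossedbelt}). Consequently both the vertex count $n(m,c) = \alpha_1 m + \alpha_2 c + \gamma$ and the total degree sum $s(m,c) = \beta_1 m + \beta_2 c + \delta$ are affine-linear functions of $(m,c)$, with the constants $\gamma, \delta$ encoding the (size-independent) contribution of the crossing gadget.

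Fix the target rational average degree $r$. The condition $s(m,c) = r \cdot n(m,c)$ is an affine Diophantine equation in $(m,c)$, whose integer solutions (satisfying the validity conditions on $m, c$) form a one-dimensional affine sublattice $L \subset \mathbb{Z}^2$. Along $L$ the achievable vertex counts form an arithmetic progression $\{n_1 + jd : j \geq 0\}$ with base $n_1$ and step $d$ determined by the tile sizes. To obtain a multiplicatively closed set of vertex counts, I would first arrange that $d \mid n_1$ by a small modification of the crossing gadget in Figure~\ref{fig:crossedbelt}---for instance, by adding a bounded, constant number of edge subdivisions along the five rim edges of the gadget---and verify that neither the $k$-crossing-criticality nor the average degree property is affected (the criticality arguments of \cite{cit:newinfcrit} are robust under such locally bounded modifications of the gadget, and the change to the constants $\gamma, \delta$ can be made so as to preserve $s / n = r$). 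Once $d \mid n_1$ holds, I define $\cF' \subseteq \cF$ as the sub-family consisting of members of $\cF$ whose parameters lie in $L$ and are sufficiently large; every such graph has $|V(G)| \in \{j d : j \geq j_0\}$ for an appropriate threshold $j_0$.

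This set $\{jd : j \geq j_0\}$ is closed under multiplication by any positive integer $a$, since $aj \geq j_0$ is automatic, and so $\cF'$ is scalable. For the $D$-max-universality, the number of vertices of each prescribed degree $d' \in D$ is an affine-linear function of $(m,c)$ and therefore grows proportionally to $|V(G)|$ along $L$, exceeding any fixed bound $a$ for sufficiently large members of $\cF'$; the vertices of degrees not in $D$ come only from the (bounded-size) crossing gadget, giving the required uniform upper bound $M$. Hence $\cF'$ is $D$-max-universal scalable as desired. The main obstacle in this plan is the initial re-tuning step which ensures $d \mid n_1$: one must exhibit an explicit bounded modification of the crossing gadget that achieves the correct divisibility without disturbing the delicate criticality arguments of \cite{cit:newinfcrit}, which amounts to a careful but essentially routine accounting of the per-tile and per-gadget vertex/edge contributions rather than any genuinely new construction.
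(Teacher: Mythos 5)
Your overall framing (vertex count and degree sum affine in the construction parameters, intersect with the line $s=rn$, obtain an arithmetic progression of achievable sizes) is a reasonable way to organise the problem, but the step you yourself flag as the ``main obstacle'' is exactly where the argument breaks. You propose to force $d\mid n_1$ by subdividing a bounded number of edges of the crossing gadget. A subdivision creates a degree-$2$ vertex, so this modification destroys $3$-connectivity, introduces a degree outside $D\cup\{3,4,6\}$ (both required of the families in Proposition~\ref{prop:critconstructions}), and---decisively---shifts $(n,s)$ by $(t,2t)$, i.e.\ at local average degree $2$. Since the target averages satisfy $r\geq4$, no choice of $t$ can ``preserve $s/n=r$'' as you claim; the constants $\gamma,\delta$ cannot be re-tuned this way. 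So the re-tuning step is not routine accounting: it requires a local operation that adds vertices at exactly the right degree ratio while keeping the graphs simple, $3$-connected and $k$-crossing-critical, and subdivision is not such an operation.

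The paper's proof supplies precisely this missing ingredient and thereby avoids the divisibility considerations altogether. Given $G$ in the family with planar belt $G'$ cut into a tile $T_{G'}$, it cyclizes $a$ copies of $T_{G'}$ and re-attaches the six degree-$3$ gadget vertices, obtaining $G''_a$ with $an+6$ vertices and degree sum $as+18$; it then performs $3a-3$ ``double split'' operations, each replacing a degree-$6$ vertex by three degree-$4$ vertices. Each double split contributes $(+2,+6)$ to $(n,s)$---local ratio $3$, matching exactly the gadget's six degree-$3$ vertices---so $|V(G_a)|=an+6+2(3a-3)=a|V(G)|$ and the degree sum is $a(s+18)$ on the nose: every integer multiple of $|V(G)|$ is realized exactly and the average degree is automatically unchanged, which is what the definition of scalability asks for. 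Your proposal also does not address case~\ref{it:even4high}), where the members of $\cF_e$ have varying average degrees and (unless $6\in D_e$) no degree-$6$ vertices available to split; the paper first contracts three well-separated edges to manufacture three degree-$6$ vertices (criticality being preserved by the results of \cite{cit:newinfcrit}) and only then applies the replication-plus-double-split machinery.
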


Note that in the previous case of 
Proposition~\ref{prop:critconstructions}\,\ref{it:even4high}), 
the claimed family from~\cite{cit:newinfcrit} 
was not required to have fixed average degree.
Now also the family extending this case~\ref{it:even4high}) will be of
fixed average degree.
\begin{proof}
The proof is completely based on the constructions from
\cite{cit:newinfcrit}, but since the question of scalability is not
considered there, we have to discuss some further details of the
crossed belt construction of \cite{cit:newinfcrit} (recall Figure~\ref{fig:crossedbelt}).

First, consider a $\{4\}$-max-universal family $\cF_4$
of simple, $3$-connected, $k$-crossing-critical graphs of average degree~$4$,
as in Proposition~\ref{prop:critconstructions}\,\ref{it:even46}).
Pick any $G\in\cF_4$; then $G$ has precisely six degree-$3$ vertices, and since the
only other vertex degrees occurring in $G$ are $4$ and $6$, $G$ has precisely
three degree-$6$ vertices.
Let $G'$ be the ``planar belt'' of $G$ (the shaded part in
Figure~\ref{fig:crossedbelt}, without degree-$3$ vertices).
Then $G'$ can be cut to form a perfect planar tile $T_{G'}$ such that
$\circ\,T_{G'}=G'$.
For integer $a\geq1$, let $G'_a$ denote the cyclization of $a$ copies of
$T_{G'}$, and let $G''_a$ denote the graph $G'_a$ with the six degree-$3$
vertices added back (such that four of them subdivide the same edges of one
copy of the tile $T_{G'}$ as they do in~$G$).
By \cite{cit:newinfcrit}, $G''_a$ is again $k$-crossing-critical.
If $n=|V(G')|$ and $s$ is the degree sum of $G'$, then
$|V(G)|=n+6$ and the degree sum of $G$ is $s+18$.
Furthermore, $|V(G''_a)|=an+6$ and the degree sum of $G''_a$ is $as+18$,
and $G''_a$ has $3a$ degree-$6$ vertices.
We denote by $G_a$ the graph obtained by $3a-3$ ``double split'' operations
each replacing a degree-$6$ vertex by three degree-$4$ vertices as
illustrated in Figure~\ref{fig:compensation}.
Then $|V(G_a)|=an+6+2(3a-3)=a|V(G)|$ and the degree sum of $G_a$ is
$as+18+6(3a-3)=a(s+18)$, and so the average degree is the same as of~$G$.
There are only three degree-$6$ vertices left in~$G_a$.
Hence, for every $a>1$, we may assume $G_a\in\cF_4$ as well.

Second, consider a $\{4,6\}$-max-universal family $\cF_r$
of simple $3$-connected $k$-crossing-critical graphs of average degree
$r\in(4,6-\frac8{k+1})$,
as in Proposition~\ref{prop:critconstructions}\,\ref{it:even46}).
Then the proof follows the same line as in the previous paragraph,
only that now we have many degree-$6$ vertices by
the assumption of $\{6\}$-universality.

Third, consider a $D_e$-max-universal family $\cF_e$
of simple $3$-connected $k$-crossing-critical graphs,
as in Proposition~\ref{prop:critconstructions}\,\ref{it:even4high}).
This case is somehow different from the previous two since we have no
vertices of degree~$6$ (unless $6\in D_e$) and $\cF_e$ contains graphs of
various average degrees.
Though, for any fixed~$\varepsilon>0$, $\cF_e$ can be chosen 
% by Proposition~\ref{prop:critconstructions}\,\ref{it:even4high})
such that the average degree of every member of $\cF_e$ is from the interval
$(4,4+\varepsilon/2)$.
Pick arbitrary but sufficiently large $G\in\cF_e$.
Then one can find (see \cite{cit:newinfcrit} for details)
three edges in $G$ not close to each other and not having vertices of degree 
other than $4$ in close neighbourhood, and let $G_1$ be obtained by
contracting these three edges (into vertices of degree~$6$).
By \cite{cit:newinfcrit}, $G_1$ is again $k$-crossing-critical.
Since $G$ is sufficiently large, the average degree of $G_1$ is equal to some
$r_1\in(4,4+\varepsilon)$.
Now the construction from the first case above applies to~$G_1$ and gives
a whole scalable family of average degree~$r_1$.
\qed\end{proof}

The next step is to combine suitable scalable families to obtain arbitrary
rational average degrees in a given interval (roughly, between the sparsest
and the densest available family).

\begin{lemma}\label{lem:composedegrees}
Assume, for $i=1,\dots,t$, that $\cF_i$ is a $D_i$-max-universal scalable
family of simple $3$-connected $k_i$-crossing-critical graphs of average
degree exactly~$r_i$, and that every graph in $\cF_1\cup\dots\cup\cF_t$ 
has at least two degree-$3$ vertices.
For every $k\geq k_1+\dots+k_t+5$, there exists rational $r_0\in(3,6)$, such
that the following holds for every $a_1,\dots,a_t,c\in\mathbb N$:
\begin{enumerate}[a)]\parskip0pt
\item\label{it:degreefreq}
there exists a simple, $3$-connected, $k$-crossing-critical graph $G$
having at least $a_i$ vertices of each degree from $D_i$,
\item\label{it:degreefew}
the number of vertices of $G$ of degree not in $D_1\cup\dots\cup D_t$ is
bounded from above by a number depending only on $c,k$ 
and the families $\cF_1,\dots,\cF_t$, and
\item the average degree of $G$ is precisely
\begin{equation}\label{eq:avgclaim}
r=\frac{\sum_{i=1}^ta_ir_i+cr_0}{\sum_{i=1}^ta_i+c} 
\,.\end{equation}
\end{enumerate}
\end{lemma}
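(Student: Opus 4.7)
\medskip
\noindent
\textbf{Proof plan.}
The plan is to realize $G$ as an iterated zip product of scaled representatives, one from each $\cF_i$, together with a single ``compensation gadget'' of the form $M_m^{q+t}$ (a variant of the crossed-belt construction of~\cite{cit:newinfcrit}, tuned to have crossing number $k'':=k-\sum_i k_i\ge 5$ rather than exactly~$5$; such a variant exists because the crossed-belt construction is parametrised by any $k''\ge 5$). Fix a base graph $G_i^{(0)}\in\cF_i$ with $n_i^{(0)}=|V(G_i^{(0)})|$, and let $N$ be a common multiple of $n_1^{(0)},\dots,n_t^{(0)}$. By $D_i$-max-universal scalability, I can pick $G_i\in\cF_i$ with $|V(G_i)|=a_iN$ that has at least $a_i$ vertices of each degree in $D_i$, only a bounded number of vertices of other degrees, average degree exactly $r_i$, and plenty of degree-$3$ vertices for the zipping step.

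Next, choose a fixed ratio $\alpha=q/m$ that realises a target $r_0\in(3,14/3\,]$ via the formula
\[
r_0=\frac{28m+18+6q}{6m+6+2q},
\]
which follows from the counts $n_0,s_0$ recorded in the proof of Lemma~\ref{lem:compensatezip}. By taking $m,q$ large enough along this ratio (with bookkeeping modulo~$N$) I can arrange $|V(M_m^q)|=cN$, so that $M_m^q$ contributes weight $cN$ with mean $r_0$. I then perform $t$ zip products, one for each pair $(G_i,M_m^{q+t})$, using the degree-$3$ vertices provided by the $+t$ increment in the compensation gadget. Theorem~\ref{thm:zip3} gives that the resulting graph $G$ is simple, $3$-connected, and $(k_1+\cdots+k_t+k'')=k$-crossing-critical. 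Part~\ref{it:degreefreq}) of the conclusion follows because zip product at degree-$3$ vertices leaves all other degrees unchanged, so the $a_i$ prescribed vertices of each degree in $D_i$ survive intact; part~\ref{it:degreefew}) follows because the off-$D_i$ degree counts in each $G_i$ are bounded and $M_m^{q+t}$ only contributes vertices of degrees from $\{3,4,6\}$ in numbers bounded in terms of $c,k$, and the families.

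For the average degree, apply Lemma~\ref{lem:compensatezip}: the mean degree of $G$ equals that of the disjoint union of $G_1,\dots,G_t$ and $M_m^q$. Since the weights are $|V(G_i)|=a_iN$ with average $r_i$ and $|V(M_m^q)|=cN$ with average $r_0$, the weighted average simplifies to
\[
r=\frac{\sum_i a_iN\,r_i + cN\,r_0}{\sum_i a_iN + cN}=\frac{\sum_i a_i r_i + c r_0}{\sum_i a_i + c},
\]
which is exactly the claim. The main obstacle I anticipate is the arithmetic of simultaneously fitting $|V(M_m^q)|=cN$ on the nose while holding $q/m$ fixed to preserve $r_0$: this is a two-parameter Diophantine constraint that forces $N$ to be divisible by a suitable constant and confines $r_0$ to a restricted (but still dense in $(3,14/3\,]$) set of rationals. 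A careful choice of $N$ together with the freedom to enlarge $m$ without disturbing the ratio should absorb these residues; then the degree-accounting, criticality via Theorem~\ref{thm:zip3}, and universality bookkeeping all go through routinely.
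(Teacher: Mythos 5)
Your overall architecture (zip scaled representatives $G_i$ onto one compensation gadget and invoke Lemma~\ref{lem:compensatezip}) matches the paper's, but the central arithmetic step has a genuine gap. You propose to fix $r_0$ by ``holding $q/m$ fixed'' in $r_0=\frac{28m+18+6q}{6m+6+2q}$; this quantity is \emph{not} a function of the ratio $q/m$ alone, because of the additive constants $18$ and $6$ (already for $q=0$ the value $\frac{28m+18}{6m+6}$ varies with $m$). Keeping the average degree constant forces $(m,q)$ to move along an \emph{affine} line $m(28-6r_0)+q(6-2r_0)=6r_0-18$, and along the integer points of that line the vertex count $6m+6+2q$ only runs through an arithmetic progression; hitting $|V(M_m^q)|=cN$ exactly for \emph{every} $c\in\mathbb N$ is therefore a simultaneous Diophantine constraint that you acknowledge but do not resolve --- and your fixed-ratio mechanism cannot resolve it. The paper's fix is to scale both parameters affinely and in lockstep with $c$, taking $G_0^{*c}=M_{cn_0/6-c(\ell+1)}^{3(c-1)(\ell+1)}$ together with $\ell$ disjoint copies of $K_{3,3}$: one checks that this union has exactly $cn_0$ vertices and average degree exactly $r_0=\frac{14}{3}-\frac{10(\ell+1)}{n_0}$ for all $c$, because each double-split adds $2$ vertices and $6$ to the degree sum in precisely the proportion needed.

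A second, smaller issue: you absorb the surplus crossing number $k''=k-\sum_ik_i$ into the gadget by invoking a crossed-belt construction ``for any $k''\ge5$'', but the cited results (Proposition~\ref{prop:critconstructions}\,\ref{it:even46})) only guarantee such families for $k''\ge10$ or odd $k''\ge5$, leaving $k''\in\{6,8\}$ uncovered; moreover a wider belt would change the counts $n_0=6m+6+2q$ and $s_0=28m+18+6q$ on which Lemma~\ref{lem:compensatezip} rests, so that lemma would have to be re-proved for your modified gadget. The paper sidesteps both problems by keeping the gadget $5$-crossing-critical and zipping in $\ell=k-(k_1+\dots+k_t+5)$ copies of $K_{3,3}$, each contributing crossing number~$1$ via Theorem~\ref{thm:zip3}; these $\ell$ extra zip operations (together with the $t$ zips onto the $G_i$) are then compensated by enlarging the double-split superscript to $3(c-1)(\ell+1)+\ell+t$. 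The remaining parts of your argument (criticality via Theorem~\ref{thm:zip3}, survival of prescribed degrees under degree-$3$ zips, boundedness of off-$D$ degrees) are fine.
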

\begin{proof}
Let $\ell=k-(k_1+\dots+k_t+5)$ and denote by $\cK_\ell$ a set of $\ell$
disjoint copies of the graph $K_{3,3}$.
Pick arbitrary $G_i\in\cF_i$, $i=1,\dots,t$.
We may w.l.o.g.~assume that $n_0=|V(G_1)|=\dots=|V(G_t)|$
and $n_0$ divisible by~$6$,
since otherwise we take the least common multiple of $6$ and all the graph sizes
and apply scalability of the families~$\cF_i$.
Clearly, $n_0$ can be chosen arbitrarily large as well, such as
$n_0\geq6(4\ell+t+4)$.
Let $G_0=M_{n_0/6-(\ell+1)}^0$ (the compensation gadget defined above)
and $H_0$ denote the disjoint union of $\cK_\ell$ and $G_0$.
Then $|V(H_0)|=n_0$ and we choose $r_0$ to be the average degree of~$H_0$;
$$
r_0=\frac{18\ell+28(\frac{n_0}{6}-(\ell+1))+18}{n_0}
   =\frac{\frac{14n_0}{3}-10(\ell+1)}{n_0}
\,.$$

Again by scalability, for $i=1,\dots,t$, there exist $G_i^{\,*a_i}\in\cF_i$ (of average degree
$r_i$) such that
$|V(G_i^{\,*a_i})|=a_in_0$.
Similarly, we let $G_0^{\,*c}=M_{cn_0/6-c(\ell+1)}^{3(c-1)(\ell+1)}$.
It is simple calculus to verify that the disjoint union of $\cK_\ell$ and
$G_0^{\,*c}$ has $cn_0$ vertices and the average degree equal to
$$
\frac{18\ell+28(\frac{cn_0}{6}-c(\ell+1))+18(c-1)(\ell+1)+18}{cn_0}
   =\frac{\frac{14cn_0}{3}-10c(\ell+1)}{cn_0}=r_0
\,.$$
Hence the average degree of the disjoint union of $\cK_\ell$ and
$G_0^{\,*c}$ and $G_1^{\,*a_1},\ldots, G_t^{\,*a_t}$ indeed is
\begin{equation}\label{eq:avgcompens}
\frac{\sum_{i=1}^t a_in_0r_i + cn_0r_0}{\sum_{i=1}^ta_in_0+cn_0} = r 
\,.\end{equation}

Finally, we let ${G'_0}^{*c}=M_{cn_0/6-c(\ell+1)}^{3(c-1)(\ell+1)+\ell+t}$
and construct the simple $3$-connected graph $G$ 
as the zip product of $\cK_\ell$ and
${G'_0}^{*c}$ and $G_1^{\,*a_1},\dots,G_t^{\,*a_t}$.
By Theorem~\ref{thm:zip3}, $G$ is $k$-crossing-critical with $k=\ell+5+k_1+\dots+k_t$, as required.
The degrees condition in \ref{it:degreefreq}) follows from
max-universal scalability of $\cF_1,\dots,\cF_t$,
and \ref{it:degreefew}) then follows as well since the size of ${G'_0}^{*c}$
is bounded with respect to~$c,k$.
Moreover, by compensation Lemma~\ref{lem:compensatezip},
the average degree of $G$ is equal to~$r$, as in \eqref{eq:avgcompens}.
\qed\end{proof}

\begin{corollary}[Lemma~\ref{lem:composedegrees}]\label{cor:composedegrees}
Assume $D_i$-max-universal scalable $k_i$-cross\-ing-critical
families $\cF_i$ of average degree $r_i$, $i=1,\dots,t$, 
as in Lemma~\ref{lem:composedegrees}, such that $r_1<r_2$.
Then, for every $k\geq k_1+\dots+k_t+5$ and every $r\in(r_1,r_2)\cap\mathbb Q$, 
there exists a $(D_1\cup\dots\cup D_t)$-max-universal
family of simple, $3$-connected, $k$-crossing-critical graphs of average
degree exactly~$r$.
\end{corollary}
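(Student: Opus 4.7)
The plan is to apply Lemma~\ref{lem:composedegrees} with parameters $a_1,\dots,a_t,c$ chosen so that formula~\eqref{eq:avgclaim} equals the target $r$, and then scale them by an arbitrary $N\in\NN$ to obtain an infinite family. The key structural observation is that~\eqref{eq:avgclaim} is homogeneous: replacing $(a_1,\dots,a_t,c)$ by $(Na_1,\dots,Na_t,Nc)$ leaves the value of $r$ unchanged. Consequently a single ``base'' tuple $(\alpha_1,\dots,\alpha_t,\gamma)\in\NN^{t+1}$ yields, via the Lemma, an entire family $\cF=\{G_N\}_{N\geq 1}$ of $k$-crossing-critical graphs all of average degree precisely~$r$.

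The main technical step is exhibiting such a base tuple in which every $\alpha_i$ is strictly positive, equivalently, writing $r$ as a rational convex combination
$$
r=\sum_{i=1}^t\lambda_i r_i+\nu r_0, \qquad \sum_{i=1}^t\lambda_i+\nu=1,
$$
with $\lambda_1,\dots,\lambda_t>0$ and $\nu\geq 0$. Since $r_1<r<r_2$, the unperturbed choice $\lambda_1=(r_2-r)/(r_2-r_1)$, $\lambda_2=(r-r_1)/(r_2-r_1)$, and $\lambda_3=\dots=\lambda_t=\nu=0$ already represents $r$ as a convex combination with positive weights on $r_1$ and $r_2$. I would then perturb by assigning small positive rational values to $\lambda_3,\dots,\lambda_t$ (and, if needed, $\nu$) and solve the resulting $2\times 2$ linear system for $\lambda_1,\lambda_2$; by continuity those remain positive whenever the perturbation is small enough. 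Clearing denominators then gives the integer tuple $(\alpha_1,\dots,\alpha_t,\gamma)$.

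With this tuple in hand, for every $N\in\NN$ Lemma~\ref{lem:composedegrees}\ref{it:degreefreq}) yields a simple $3$-connected $k$-crossing-critical graph $G_N$ (criticality reconfirmed through Theorem~\ref{thm:zip3} applied to the iterated zip product structure) of average degree exactly $r$, containing at least $N\alpha_i$ vertices of every degree $d\in D_i$. Since all $\alpha_i>0$ and $N$ may be taken arbitrarily large, the family $\cF$ is $(D_1\cup\dots\cup D_t)$-universal.

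The subtle point, and the main obstacle I anticipate, is the $D$-max-universality clause: this requires a \emph{uniform} bound on the number of vertices of degree outside $D_1\cup\dots\cup D_t$, whereas Lemma~\ref{lem:composedegrees}\ref{it:degreefew}) bounds this count by a quantity depending on $c=N\gamma$, which grows with $N$. Inspection of the construction, however, shows that the only ``outside'' degrees introduced by the lemma come from $\cK_\ell$ (all of degree~$3$) and from the compensation gadget, whose vertices have degrees in $\{3,4,6\}$. In the intended applications of the corollary (used to assemble the cases of Theorem~\ref{thm:alluniversal-avgdeg}), the input families $\cF_1,\dots,\cF_t$ are chosen so that $\{3,4\}\subseteq D_1\cup\dots\cup D_t$ and, whenever $\gamma>0$ is genuinely needed, also $6\in D_1\cup\dots\cup D_t$, so that all compensation-gadget degrees are absorbed into the universal set and the uniform max-universality bound is restored.
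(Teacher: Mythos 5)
Your reduction to a convex-combination representation of $r$ and the observation that \eqref{eq:avgclaim} is homogeneous are fine as far as plain $(D_1\cup\dots\cup D_t)$-\emph{universality} goes, but the proof has a genuine gap at exactly the point you flag: max-universality. Scaling the whole tuple $(\alpha_1,\dots,\alpha_t,\gamma)$ by $N$ forces $c=N\gamma$ to grow, and $\gamma\geq1$ is unavoidable because the compensation gadget ${G'_0}^{*c}$ is a mandatory factor of the zip product (it supplies the $+5$ in $k=k_1+\dots+k_t+5$ and the compensation of Lemma~\ref{lem:compensatezip}); there is no $c=0$ instance of the construction. The gadget $M_{cn_0/6-c(\ell+1)}^{3(c-1)(\ell+1)+\ell+t}$ contains $\Theta(cn_0)$ vertices of degree $6$ (and of degrees $3$ and $4$), so along your family $\{G_N\}$ the number of degree-$6$ vertices grows without bound. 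Your escape hatch --- that in the intended applications $6\in D_1\cup\dots\cup D_t$ whenever $\gamma>0$ is needed --- is false: Theorem~\ref{thm:alluniversal-avgdeg}\,(\ref{it:avg34}) with $D=\{3,4\}$ invokes this corollary with $6\notin D_1\cup D_2$ and, as just noted, $\gamma>0$ is always needed. So your argument does not prove the corollary as stated, nor does it suffice for the theorem it feeds into.

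The paper avoids this by keeping $c$ \emph{fixed} and varying only $a_1,\dots,a_t$: it substitutes $a_1=ma$, $a_3=\dots=a_t=a$, $a_2=b$ (with $m$ large enough that the merged density $r_1'$ stays below $r$), turning \eqref{eq:togetpq} into a single linear Diophantine equation in $a,b$, and then chooses $c$ as a suitable fixed multiple of a GCD so that this equation has infinitely many solutions in positive integers. Because $c$ is fixed, Lemma~\ref{lem:composedegrees}\,(\ref{it:degreefew}) gives a bound on the off-$D$ degrees that is uniform over the whole family, which is precisely what $D$-max-universality requires. If you want to salvage your approach, you must likewise decouple the growth of $a_1,\dots,a_t$ from $c$; homogeneous scaling alone cannot do that.
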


\begin{proof}
The proof is a simple exercise in calculus based on Lemma~\ref{lem:composedegrees}.
Let $r=\frac pq$ where $p,q$ are relatively prime integers.
Our task is to find infinitely many suitable choices of $a_1,\dots,a_t$ 
such that, by \eqref{eq:avgclaim},
\begin{equation}\label{eq:togetpq}
\frac pq=\frac{\sum_{i=1}^ta_ir_i+cr_0}{\sum_{i=1}^ta_i+c}
\end{equation}
for some (unknown) rational $r_0\in(3,6)$ and suitable (but fixed, see below)~$c$.

To further simplify the task, we choose sufficiently large
integer $m$ such that $r_1'=(mr_1+r_3+\dots+r_{t})/(m+t-2)<r$
and set $a_1=ma$, $a_3=\dots=a_{t}=a$, $a_2=b$ for yet unknown~$a,b$.
Then \eqref{eq:togetpq} reads:
$$
\frac pq=\frac{mar_1+ar_3+\dots+ar_{t}+br_2+cr_0}{a(m+t-2)+b+c}
  =\frac{a(m+t-2)r_1'+br_2+cr_0}{a(m+t-2)+b+c}
$$
Let $s=m+t-2$, and $r_1'=\frac{p_a}{q_a}$, $r_2=\frac{p_b}{q_b}$, $r_0=\frac{p_0}{q_0}$.
We continue with equivalent processing:
$$
\frac pq = \frac{as\frac{p_a}{q_a} +b\frac{p_b}{q_b} +c\frac{p_0}{q_0}}{as+b+c}
$$ $$
p(as+b+c)q_aq_bq_0 = asqq_bq_0p_a+bqq_aq_0p_b+cqq_aq_bp_0
$$
Finally, we get that \eqref{eq:togetpq} under our special substitution for
$a_1,\dots,a_t$, is equivalent to the following
linear Diophantine equation in $a,b$:
$$
a\cdot sq_bq_0(pq_a-p_aq) + b\cdot q_aq_0(pq_b-p_bq) =
	cq_aq_b(p_0q-pq_0)
$$
Setting $c=q_0\cdot GCD\big(sq_b(pq_a-p_aq),q_a(pq_b-p_bq)\big)$,
this equation has infinitely many integer solutions, and since $r_1'<r<r_2$,
we have that $pq_a-p_aq>0$ and $pq_b-p_bq<0$ and so infinitely many of the
solutions are among positive integers
(regardless of whether the right-hand side is positive, zero or negative).
\qed\end{proof}

\begin{proof}[Proof of Theorem~\ref{thm:alluniversal-avgdeg}]
The case \ref{it:avgeven}) has already been proved in \cite{cit:newinfcrit},
see Proposition~\ref{prop:critconstructions}\,\ref{it:even46}).
In all other cases, let $\cF_1$ be the family from
Proposition~\ref{prop:critconstructions}\,\ref{it:3to3}) such that the
parameter $n$ satisfies $r_1=3+\frac1{4n-7}<r$ (where $r\in A\cap\mathbb Q$,
$r>3$, is the desired fixed average degree).

In the case \ref{it:avg346}), let $\cF_2$ be a family from
Proposition~\ref{prop:critconstructions}\,\ref{it:even46})
with average degree equal to arbitrary (but fixed)
$r_2\in(r,6)\not=\emptyset$, which can be chosen as scalable by
Lemma~\ref{lem:makescalable}.
In the case~\ref{it:avg35}), let $\cF_2$ be the family from
Proposition~\ref{prop:critconstructions}\,\ref{it:anyodd})
with the parameter $\ell$ such that $b=2\ell+3$;
in this case $r_2=5-\frac8{b+1}>r$.
Finally, we consider the remaining subcases of~\ref{it:avg34}).
If $D=\{3,4\}$, then let $\cF_2$ be the second family from
Proposition~\ref{prop:critconstructions}\,\ref{it:even46})
with average degree~$r_2=4$.
If $D\supsetneq\{3,4\}$, then let $\cF_2$ be the family from
Proposition~\ref{prop:critconstructions}\,\ref{it:even4high}), made scalable
and of fixed average degree $r_2>4$ by Lemma~\ref{lem:makescalable}.

In each one of the choices of $\cF_1,\cF_2$ above, $r_1<r<r_2$ holds.
Furthermore, if necessary in order to fulfil $D$-max-universality, we
introduce additional scalable families $\cF_3,\dots$ as in the proof of
Theorem~\ref{thm:alluniversal}.
Then, Theorem~\ref{thm:alluniversal-avgdeg} follows directly from
Corollary~\ref{cor:composedegrees}.
\qed\end{proof}

\section{Degree Properties in 2-Crossing-Critical Families}
\label{sc:2cc} 

In the previous constructions, we have always assumed that the fixed 
crossing number $k$ of the families is sufficiently large.
One can, on the other hand, ask what happens if we fix a (small) 
value of~$k$
beforehand (i.e., independently of the asked degree properties).

\begin{figure}[!ht]
\begin{center}
\includegraphics[width=0.35\hsize]{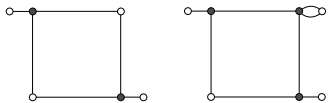}
\end{center}
\caption{The two frames.}
\label{fig:2crossframes}
\end{figure}
\vspace*{-6mm}%
\begin{figure}[!ht]
\begin{center}
\includegraphics[width=0.65\hsize]{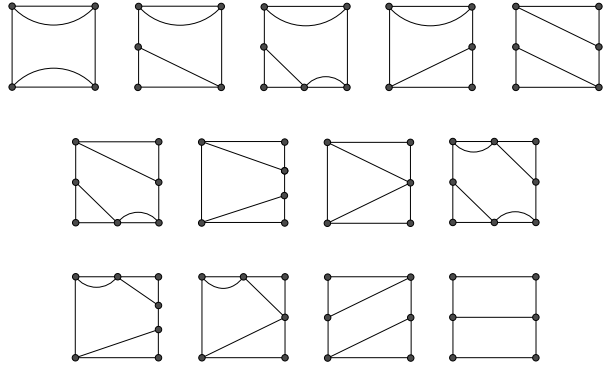}
\end{center}
\caption{The thirteen pictures for Definition~\ref{df:2critchar}.}
\label{fig:2crosspictures}
\end{figure}

In this direction, there is the remarkable result of  Dvo\v{r}\'ak and
Mohar~\cite{cit:dvorakmohar} proving the existence of $k$-crossing-critical
families with unbounded maximum degree for any $k\geq 171$.
Unfortunately, since \cite{cit:dvorakmohar} is not really constructive, we
do not know anything exact about the degrees occurring in these families.
An explicit construction of a $k$-crossing-critical family with unbounded
maximum degree is known only in the projective plane~\cite{cit:starsbonds}
for $k\geq2$, but that falls outside of the area of interest of this paper.

It thus appears natural to thoroughly investigate the least non-trivial case
of~$k=2$, with significant help of the characterization
result~\cite{cit:2critchar}.
In a nutshell, \cite{cit:2critchar} claims that nearly all
$2$-crossing-critical graphs are built from a certain rather small finite 
set of tiles.
The formal result is stated next.

We refer to Section~\ref{sc:tools} for definitions of the 
operations $\circ$, $\otimes$ and $^{\updownarrow}$ on tiles.

\begin{definition}
\label{df:2critchar}
Let $\cS$ denote the set of tiles which are 
obtained as combinations of one of the two frames, 
illustrated in Figure \ref{fig:2crossframes},
and the $13$ pictures, shown in Figure \ref{fig:2crosspictures}.
These are combined in a way that 
a picture is inserted into a frame by identifying
the two geometric squares (typically by subdividing some edges of the frame).
A given picture may be inserted into a frame either with the given orientation 
or with a $180^\circ$ rotation. Note that $\cS$ contains 42 different tiles.

We inductively define the following:
The set of \DEF{odd tiles} $\cT_o (\cS)$ consists of 
all the tiles that are either in $\cS$
or are obtained as $T_e\otimes T$ with $T_e\in\cT_e$ and $T\in\cS$.
The set of \DEF{even tiles} $\cT_e(\cS)$ 
consists precisely of the tiles obtained as 
$T_o\oplus {}^\updownarrow T^\updownarrow$, where $T_o\in\cT_o(\cS)$ 
and $T\in\cS$. 
Note that an odd number of tiles of $\cS$ is used to 
construct a tile of $\cT_o(\cS)$, and an even number for $\cT_e(\cS)$.

Let the set $\cG(\cS)$ consist precisely of all the graphs of the form 
$G=\circ(T)$ where $T\in\cT_o(\cS)\setminus \cS$. 
Note that each graph of $\cG(\cS)$ is obtained as $G=\circ (\cT^{\updownarrow})$, 
where $\cT$ is a sequence $(T_0, ^{\updownarrow}{T_1}^{\updownarrow}, T_2, \dots, ^{\updownarrow}{T_{2m-1}}^{\updownarrow}, T_{2m})$
such that $m\geq 1$ and $T_i \in \cS$ for each $i = 0, 1, 2, \dots, 2m$.
\end{definition}
\begin{figure}[h]
\centering
\subfigure[tile $T_a$]{\includegraphics[width=0.15\textwidth]{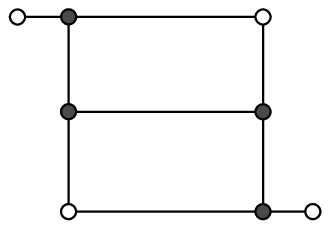}}
\qquad
\subfigure[tile $T_b$]{\includegraphics[width=0.15\textwidth]{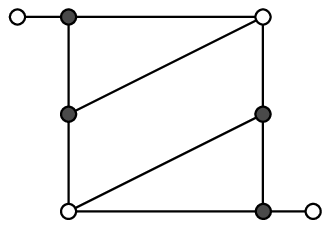}}
\qquad
\subfigure[tile $T_c$]{\includegraphics[width=0.15\textwidth]{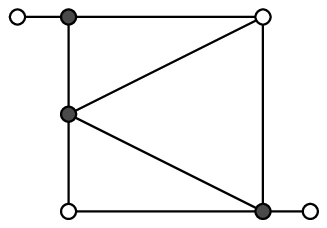}}
\qquad
\subfigure[tile $T_d$]{\includegraphics[width=0.15\textwidth]{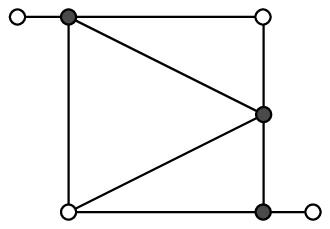}}
\quad

\subfigure[tile $T_e$]{\includegraphics[width=0.15\textwidth]{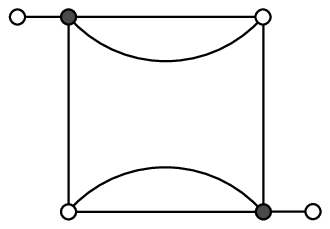}}
\qquad
\subfigure[tile $T_f$]{\includegraphics[width=0.15\textwidth]{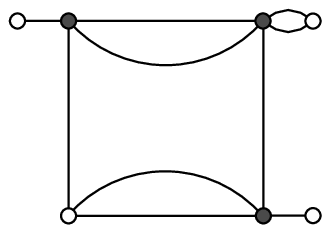}}
\qquad
\subfigure[tile $T_g$]{\includegraphics[width=0.15\textwidth]{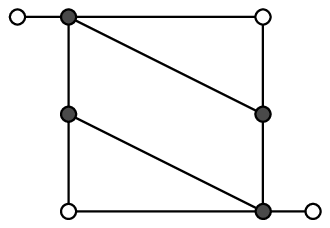}}
\qquad
\subfigure[tile $T_h$]{\includegraphics[width=0.15\textwidth]{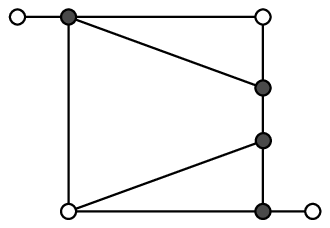}}
\quad

\subfigure[tile $T_i$]{\includegraphics[width=0.15\textwidth]{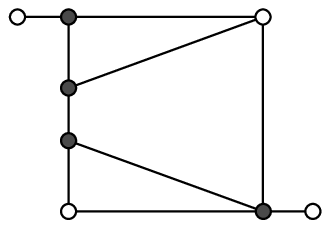}}
\qquad
\subfigure[tile $T_j$]{\includegraphics[width=0.15\textwidth]{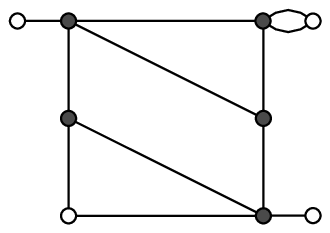}}
\qquad
\subfigure[tile $T_k$]{\includegraphics[width=0.15\textwidth]{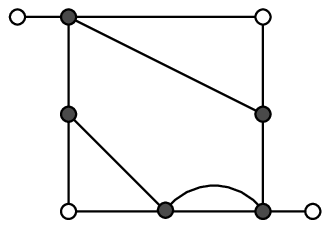}}
\qquad
\subfigure[tile $T_l$]{\includegraphics[width=0.15\textwidth]{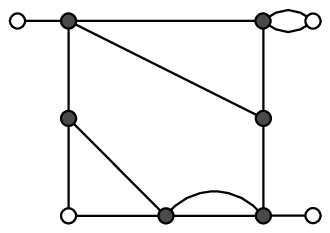}}
\quad

\subfigure[tile $T_m$]{\includegraphics[width=0.15\textwidth]{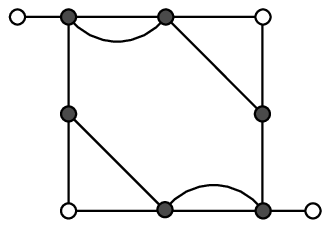}}
\qquad
\subfigure[tile $T_n$]{\includegraphics[width=0.15\textwidth]{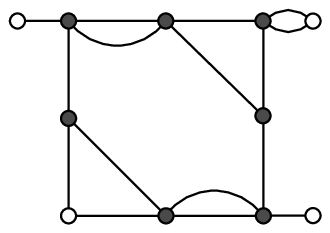}}
\qquad
\subfigure[tile $T_o$]{\includegraphics[width=0.15\textwidth]{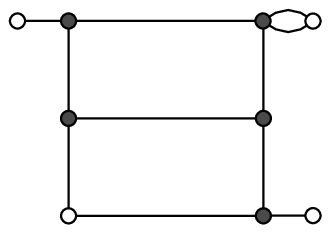}}
\qquad
\subfigure[tile $T_p$]{\includegraphics[width=0.15\textwidth]{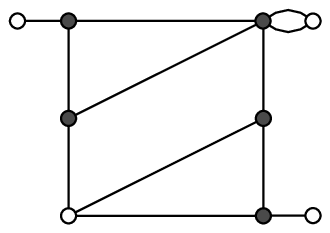}}
\caption{Examples of tiles from $\cS$.}
\label{fig:2crossexamples}
\end{figure}

Some examples of tiles from $\cS$, hereafter named
from $T_a$ to $T_e$, are shown in Figure~\ref{fig:2crossexamples}. 
We will use these tiles from $T_a$ to $T_p$ in what follows.

\begin{theorem}(\cite{cit:2critchar})
\label{th:2critchar}
There exist only finitely many $3$-connected $2$-crossing-critical graphs
which do not contain a subdivision of the graph $V_{10}$,
which is obtained from a $10$-cycle by adding all the $5$ main diagonals.

Then, $G$ is a $3$-connected $2$-crossing-critical graph containing a subdivision 
of $V_{10}$, if and only if $G \in \cG(\cS)$.
\end{theorem}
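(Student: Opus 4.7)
The plan is to prove the theorem in three stages: (a) the finiteness claim for graphs without a $V_{10}$-subdivision, (b) the easy "if" direction that every $G\in\cG(\cS)$ is indeed $3$-connected and $2$-crossing-critical with a $V_{10}$-subdivision, and (c) the hard "only if" direction that every $3$-connected $2$-crossing-critical graph containing a $V_{10}$-subdivision arises from $\cT_o(\cS)$.

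For part~(a), I would first show that any $2$-crossing-critical graph has bounded pathwidth (invoking \cite{cit:pw}). Excluding a $V_{10}$-subdivision, combined with bounded pathwidth and the fact that $2$-crossing-critical graphs have no large "rotational" substructure without using $V_{10}$-type rungs, should force a uniform bound on the number of vertices of degree at least~$3$. Since degree-$2$ vertices can be contracted, this bounds the whole graph and yields only finitely many simple $3$-connected graphs of this form. Part~(b) would proceed by a direct application of the tiles machinery: verify by inspection that each of the 42 tiles in $\cS$ is a perfect planar tile, check that each is $2$-degenerate (Definition~\ref{def:criticalSeq}) by drawing $T^{\updownarrow}-e$ with one crossing for every edge~$e$, and check that any cyclically-compatible odd-length sequence of tiles from $\cS$ satisfies $\tcrn\ge 2$ on all its $i$-cuts. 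Then Corollary~\ref{co: criticalGraph} gives $2$-crossing-criticality of $\circ(\cT^{\updownarrow})$, and a $V_{10}$-subdivision can be exhibited through the rim of the cyclization together with diagonals supplied by the pictures.

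The hard part is~(c). Given $G$ $3$-connected, $2$-crossing-critical with a $V_{10}$-subdivision~$H$, I would first use a theorem \`a~la Robertson--Seymour/Tutte to show that $G$ has a natural cyclic structure: the rim cycle of $H$ serves as a "belt", and the remaining edges of~$G$ form bridges attached along this belt. Criticality plus $3$-connectivity should then be used to argue that (i)~each bridge attaches locally, so the belt decomposes into short arcs between consecutive attachments; (ii)~each local "bridge + arc" piece, viewed as a tile with left/right walls on the belt, admits at most one crossing in any optimal drawing of~$G$; and (iii)~any such local piece with this property and with enough internal complexity to contribute to $\crn=2$ must be isomorphic (as a tile) to one of the 42 tiles in $\cS$.

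The main obstacle will be step~(iii): this is an exhaustive case analysis that must match the combinatorial freedom in "bridge shapes" against the exact library encoded by the two frames and thirteen pictures, while simultaneously ruling out any configuration that either (1)~would allow removing an edge without dropping below $\crn=2$, contradicting criticality, or (2)~would produce a drawing with only one crossing, contradicting $\crn(G)\ge 2$. Controlling both sides of this edge-criticality/crossing-lower-bound tension is where the bulk of the effort of \cite{cit:2critchar} lies, and is essentially why the library of tiles contains exactly 42 members and the cyclization must use an odd number of them (so that the parity of "twists" along the belt produces two unavoidable crossings rather than zero). Finally, the alternating pattern of $^{\updownarrow}T^{\updownarrow}$ in the definition of $\cG(\cS)$ would be forced by matching the orientations needed to make the local bridge-tiles fit together consistently along the belt.
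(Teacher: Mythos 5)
This theorem is not proved in the paper at all: it is quoted verbatim from the external monograph \cite{cit:2critchar}, so there is no internal proof to compare your attempt against. Judged on its own terms, your proposal is a reasonable high-level road map that does track the known strategy of that monograph (finiteness via width bounds for the $V_{10}$-free case, tile machinery for the ``if'' direction, and a bridge/band decomposition over the $V_{10}$-subdivision for the ``only if'' direction), but it is a plan rather than a proof, and the plan omits precisely the parts that constitute the theorem's content.

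Concretely, two gaps. First, in part~(a) the sentence ``should force a uniform bound on the number of vertices of degree at least~$3$'' is the entire finiteness claim; nothing in your sketch explains why bounded pathwidth plus the absence of a $V_{10}$-subdivision prevents arbitrarily long repetition of local structure (a priori a long path-like $2$-crossing-critical graph could avoid $V_{10}$), and the actual argument in \cite{cit:2critchar} requires a delicate quantitative analysis to extract an explicit vertex bound. Second, and more seriously, step~(iii) of part~(c) --- showing that every bridge configuration attached to the belt is forced, by criticality and the lower bound $\crn(G)\ge 2$, to be one of exactly the $42$ tiles generated by the two frames and thirteen pictures --- is the heart of the theorem and occupies the bulk of a book-length case analysis. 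You correctly identify it as the main obstacle, but identifying an obstacle is not the same as overcoming it: as written, the proposal reduces the theorem to an unproven classification claim of essentially the same difficulty as the theorem itself. For the purposes of this paper the correct move is simply to cite \cite{cit:2critchar}, which is what the authors do.
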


Since we are interested exclusively in infinite families of $2$-crossing-critical
graphs, we can focus on the graphs in $\cG(\cS)$, 
as any remaining (necessarily finite) subset of graphs disjoint from $\cG(\cS)$
would not affect degree properties of our families.
Note also that any $3$-connected, $2$-crossing-critical family of graphs contains at most 
finitely many graphs which are not almost-planar 
because any graph from $\cG(\cS)$ is almost-planar.

\begin{theorem}
\label{th:2crossD}
A $3$-connected $2$-crossing-critical $D$-max-universal family 
\begin{enumerate}[a)]
\item\label{it:a_simple}
of simple graphs exists if and only if $\{3\}\subsetneq D \subseteq \{3,4,5,6\}$.
	\item\label{it:b_general} exists if and only if $D \subseteq \{3,4,5,6\}$, $|D|\ge 2$, and $D\cap\{3,4\}\neq \emptyset$.
\end{enumerate}

\end{theorem}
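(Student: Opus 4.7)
The plan is to reduce both parts to Theorem~\ref{th:2critchar}: every infinite $3$-connected $2$-crossing-critical family coincides, up to finitely many exceptions, with a subfamily of $\cG(\cS)$, and each member of $\cG(\cS)$ is the cyclization of an odd product of the $42$ tiles in~$\cS$. Since $D$-(max-)universality is insensitive to finite subfamilies, the whole question reduces to analysing cyclizations of long tile products from~$\cS$.

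For the necessity of $D\subseteq\{3,4,5,6\}$ in both parts, I would inspect the two frames of Figure~\ref{fig:2crossframes} together with the thirteen pictures of Figure~\ref{fig:2crosspictures} and verify that every vertex of a tile of $\cS$ has degree at most~$6$, even after the wall identifications produced by $\otimes$ and the ensuing maximal-$2$-path contractions. The additional necessary conditions require a finer census. From the frames themselves one finds that any long cyclization in $\cG(\cS)$ contains a linear number of vertices whose cyclization-degree is forced to lie in $\{3,4\}$ — exactly~$3$ whenever the resulting graph is simple — which yields $3\in D$ in case~(\ref{it:a_simple}) and $D\cap\{3,4\}\ne\emptyset$ in case~(\ref{it:b_general}). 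An analogous analysis shows that no long cyclization of tiles from $\cS$ has only one frequently occurring degree, giving $\{3\}\subsetneq D$ in~(\ref{it:a_simple}) and $|D|\ge 2$ in~(\ref{it:b_general}).

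For sufficiency I would proceed constructively, handling each admissible $D$ case-by-case. For each such $D$ I choose a short multiset $\cM(D)\subset\cS$, drawing from the labelled examples $T_a,\dots,T_p$ of Figure~\ref{fig:2crossexamples} and their inversions, whose cyclization — when each member of $\cM(D)$ appears $m$ times in the sequence — lies in $\cG(\cS)$ (hence is automatically $3$-connected and $2$-crossing-critical by Theorem~\ref{th:2critchar}) and possesses at least $c\,m$ vertices of each degree $d\in D$ but only a bounded number of vertices of other degrees. Letting $m\to\infty$ delivers the required $D$-max-universal family.

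I expect the main obstacle to lie in case~(\ref{it:a_simple}): certain tile products in $\cS$ introduce parallel edges at the wall-identification seams, so the chosen representatives of $\cM(D)$ must simultaneously avoid these seams and still deliver each prescribed degree sufficiently often. The finite case analysis over admissible $D$, together with checking simplicity in~(\ref{it:a_simple}), is the bulk of the work; the crossing-criticality and degree frequency counts then follow immediately from Theorem~\ref{th:2critchar} and the explicit degree data of the $42$ tiles.
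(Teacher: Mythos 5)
Your proposal follows essentially the same route as the paper: reduce everything to $\cG(\cS)$ via Theorem~\ref{th:2critchar}, obtain the necessary conditions by a finite inspection of the degrees arising in the $42$ tiles and their wall identifications, and establish sufficiency by exhibiting, for each admissible $D$, an explicit periodic tile sequence whose cyclization realizes exactly the degrees in $D$ arbitrarily often. The paper carries out the same plan, with the only wrinkle being the case $D=\{3,5,6\}$, which needs a non-uniform arrangement (a block of $T_b$'s inserted into an alternating $T_a,T_b$ sequence) — something your "choose a multiset and repeat" framework accommodates but does not single out.
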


\begin{proof}
Let $D$ be a set of positive integers and let $\cF$ be any $3$-connected $2$-crossing-critical $D$-max-universal family. By Theorem \ref{th:2critchar}, we may assume $\cF\subseteq\cG(\cS)$.\\
For case \ref{it:a_simple}), there are only nine simple tiles in $\cS$, and by join of any two of them 
we can only construct vertices with degrees 3, 4, 5 and 6, so 
$D\subseteq \{3,4,5,6\}$.
On the other hand, any simple tile from $\cS$ has a vertex of 
degree 3 that 
is not in its left or right wall, so $\{3\}\subseteq D$,
and we get some other vertex with degree not equal to 3 after we
join any two of them, so $\{3\}\subsetneq D$.

Now it only remains to construct a family $\cF$ for a set $D$
such that $\{3\}\subsetneq D \subseteq \{3,4,5,6\}$.
To this end, consider the following sequences of length $2m+1$:
$$
\aligned
\cT(\{3,4\},m)&=(T_a, ^{\updownarrow}{T_a}^{\updownarrow}, T_a, \dots, ^{\updownarrow}{T_a}^{\updownarrow}, T_a)\quad \text{(Figure \ref{fig:2cross34})},\\
\cT(\{3,5\},m)&=(T_a, ^{\updownarrow}{T_b}^{\updownarrow}, T_a, \dots, ^{\updownarrow}{T_b}^{\updownarrow}, T_a)\quad \text{(Figure \ref{fig:2cross35})},\\
\cT(\{3,6\},m)&=(T_b, ^{\updownarrow}{T_b}^{\updownarrow}, T_b, \dots, ^{\updownarrow}{T_b}^{\updownarrow}, T_b)\quad \text{(Figure \ref{fig:2cross36})},\\
\cT(\{3,4,5\},m)&=(T_a, ^{\updownarrow}{T_d}^{\updownarrow}, T_a, \dots, ^{\updownarrow}{T_d}^{\updownarrow}, T_a)\quad \text{(Figure \ref{fig:2cross345})},\\
\cT(\{3,4,6\},m)&=(T_c, ^{\updownarrow}{T_d}^{\updownarrow}, T_c, \dots, ^{\updownarrow}{T_d}^{\updownarrow}, T_c)\quad \text{(Figure \ref{fig:2cross346})},\\
\cT(\{3,4,5,6\},m)&=(T_c, ^{\updownarrow}{T_b}^{\updownarrow}, T_c, \dots, ^{\updownarrow}{T_b}^{\updownarrow}, T_c)\quad \text{(Figure \ref{fig:2cross3456})}.
\endaligned
$$

\begin{figure}[t]
\psfrag{Lww}{$L_{ww}$}
\psfrag{u1}{$\sigma$}
\psfrag{wl}{$\alpha$}
\centering
\begin{minipage}{.48\textwidth}
	\centering\includegraphics[width=0.9\textwidth]{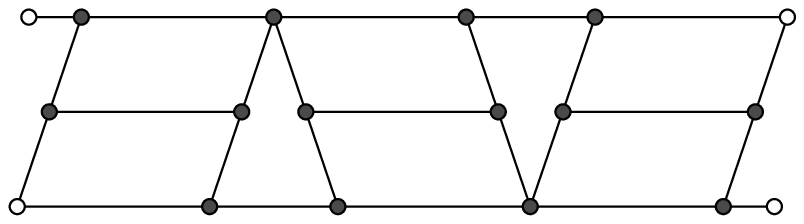}
	\caption{Tile $\otimes\cT(\{3,4\},m)$ for $m=1$.}\label{fig:2cross34}
\end{minipage}
\begin{minipage}{.48\textwidth}
	\centering\includegraphics[width=0.9\textwidth]{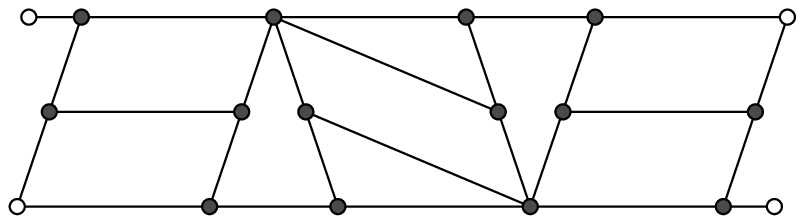}
	\caption{Tile $\otimes\cT(\{3,5\},m)$ for $m=1$.}\label{fig:2cross35}
\end{minipage}

\vspace*{3mm}%
\begin{minipage}{.48\textwidth}
	\centering\includegraphics[width=0.9\textwidth]{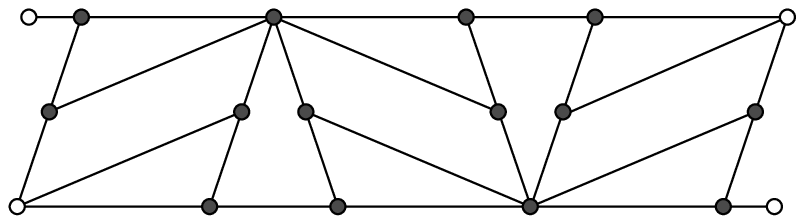}
	\caption{Tile $\otimes\cT(\{3,6\},m)$ for $m=1$.}\label{fig:2cross36}
\end{minipage}
\begin{minipage}{.48\textwidth}
	\centering\includegraphics[width=0.9\textwidth]{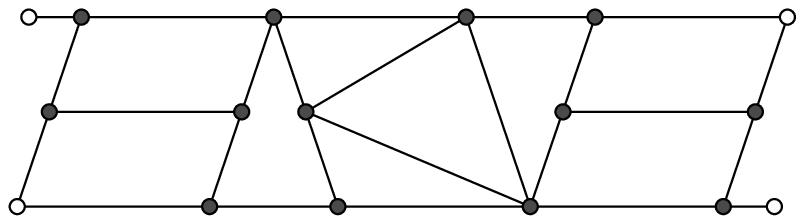}
	\caption{Tile $\otimes\cT(\{3,4,5\},m)$ for $m=1$.}\label{fig:2cross345}
\end{minipage}

\vspace*{3mm}%
\begin{minipage}{.48\textwidth}
	\centering\includegraphics[width=0.9\textwidth]{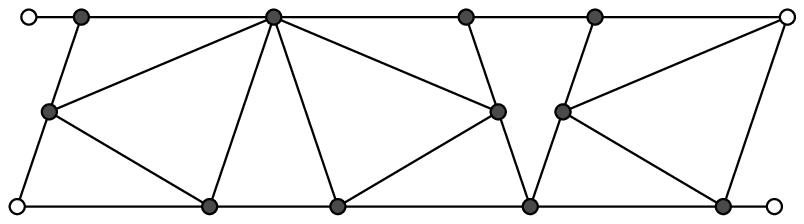}
	\caption{Tile $\otimes\cT(\{3,4,6\},m)$ for $m\!\!=\!\!1$.}\label{fig:2cross346}
\end{minipage}
\begin{minipage}{.48\textwidth}
	\centering\includegraphics[width=0.9\textwidth]{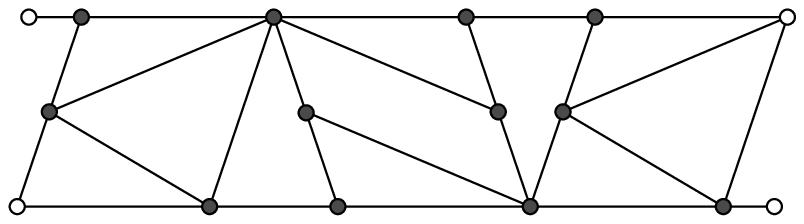}
	\caption{Tile $\otimes\cT(\{3,4,5,6\},m)$ for $m\!\!=\!\!1$.}\label{fig:2cross3456}
\end{minipage}
\end{figure}

For any $\{3\}\subsetneq D \subseteq \{3,4,5,6\}$, $D\not=\{3,5,6\}$,
the constructed simple graph $\circ\big(\cT(D,m)^\updownarrow\big)$ is $3$-connected,
$2$-crossing-critical by Theorem~\ref{th:2critchar},
and contains vertices only with degrees from $D$ (see the pictures),
each at least $m$ times.
Hence $\{\circ\big(\cT (D, m)^\updownarrow\big): m\in \ZZ^+\}$ constitute the
required families, except the case $D=\{3,5,6\}$.
For the latter remaining case, we ``insert''
$^\updownarrow\cT(\{3,6\},m)^\updownarrow$ in place of one
$^{\updownarrow}{T_b}^{\updownarrow}$ in $\cT(\{3,5\},m)$,
providing the following sequence of length $4m+1$:
$$\cT(\{3,5,6\},2m) =(T_a, ^{\updownarrow}{T_b}^{\updownarrow}, T_b,
  \dots, T_b, ^{\updownarrow}{T_b}^{\updownarrow},\> T_a,
  ^{\updownarrow}{T_b}^{\updownarrow}, T_a, \dots,
  ^{\updownarrow}{T_b}^{\updownarrow}, T_a)
$$
Since the same now holds also for
$\circ\big(\cT(\{3,5,6\},2m)^\updownarrow\big)$,
the construction part is finished.

We do similar for case \ref{it:b_general}). These graphs only have degrees 3, 4, 5, and 6, so $D\subseteq \{3,4,5,6\}$.
On the other hand, any join of tiles has at least two vertices of different 
degrees and at least one of them being $3$ or $4$,
so $|D|\ge 2$ and $D\cap\{3,4\}\neq \emptyset$.

For the converse, we must construct a family $\cF$ for any such prescribed set~$D$.
Using Theorem \ref{th:2crossD} \ref{it:a_simple}) (for $D$ such that $3\in D$), 
only the cases $D\cap\{3,4\}=\{4\}$ remain to be resolved.
Define the following sequences of length $2m+1$ each:
$$
\aligned
\cT(\{4,5\},m)&=(T_f, ^{\updownarrow}{T_f}^{\updownarrow}, T_f, \dots, ^{\updownarrow}{T_f}^{\updownarrow}, T_f)\quad \text{(Figure \ref{fig:2cross45})},\\
\cT(\{4,6\},m)&=(T_e, ^{\updownarrow}{T_e}^{\updownarrow}, T_e, \dots, ^{\updownarrow}{T_e}^{\updownarrow}, T_e)\quad \text{(Figure \ref{fig:2cross46})},\\
\cT(\{4,5,6\},m)&=(T_e, ^{\updownarrow}{T_f}^{\updownarrow}, T_e, \dots, ^{\updownarrow}{T_f}^{\updownarrow}, T_e)\quad \text{(Figure \ref{fig:2cross456})},
\endaligned
$$
\begin{figure}[h]
\psfrag{Lww}{$L_{ww}$}
\psfrag{u1}{$\sigma$}
\psfrag{wl}{$\alpha$}
\centering
\begin{minipage}{.48\textwidth}
	\centering\includegraphics[width=0.99\textwidth]{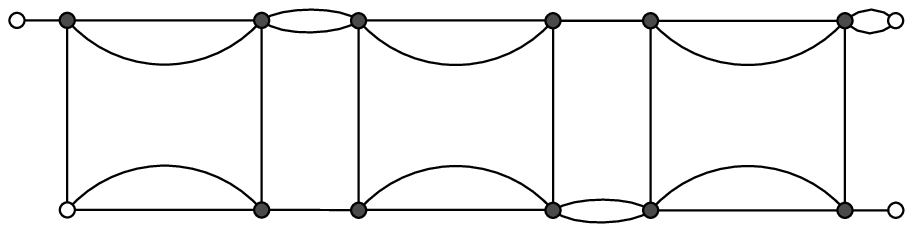}
	\caption{Tile $\otimes\cT(\{4,5\},m)$ for $m=1$.}\label{fig:2cross45}
\end{minipage}
\end{figure}
\begin{figure}[h]
\psfrag{Lww}{$L_{ww}$}
\psfrag{u1}{$\sigma$}
\psfrag{wl}{$\alpha$}
\centering
\begin{minipage}{.48\textwidth}
	\centering\includegraphics[width=0.9\textwidth]{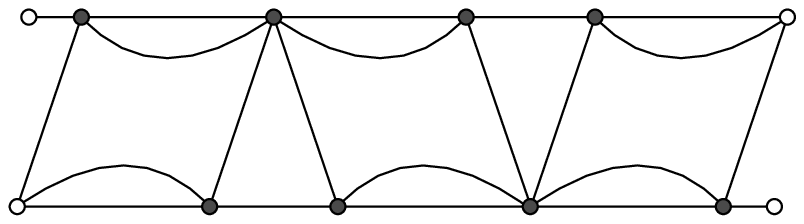}
	\caption{Tile $\otimes\cT(\{4,6\},m)$ for $m=1$.}\label{fig:2cross46}
\end{minipage}
\begin{minipage}{.48\textwidth}
	\centering\includegraphics[width=0.9\textwidth]{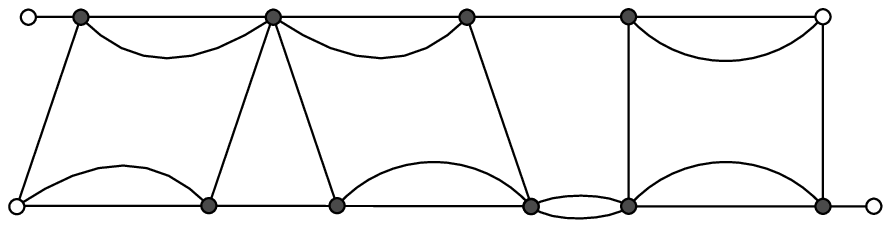}
	\caption{Tile $\otimes\cT(\{4,5,6\},m)$ for $m=1$.}\label{fig:2cross456}
\end{minipage}
\end{figure}
Similarly as above,
for any $D \subseteq \{3,4,5,6\}$, $|D|\ge 2$, $D\cap\{3,4\}=\{4\}$,
the constructed graph $\circ\big(\cT(D,m)^\updownarrow\big)$ is
$3$-connected and $2$-crossing-critical,
contains vertices only with degrees from $D$, each at least $m$ times,
and so we are done.
\qed\end{proof}

\section{Average Degrees in $2$-crossing-critical Families} 
\label{sc:2ccdeg}

In order to study average degrees in $2$-crossing-critical graphs, 
we introduce the \DEF{density characteristics} of a tile $T$ 
as the pair of integers $(a,b)$, where (i) $a$ is the number of vertices 
of $T$, counting wall vertices of degree greater than $1$ as $1/2$
and those of degree $1$ as 0, and (ii) $b$ is the sum of degrees of $T$
counting all degrees in full except those of wall vertices with degree $1$.
Then we define the \DEF{density} of a tile $T$ as $\tfrac{b}{a}$. 
Relevance of this concept is revealed through the following lemma:

\begin{lemma}
\label{lm:characteristics}
Let $T\in\cT_o(\cS)$ be the join of 
$\cT=(T_0, ^{\updownarrow}{T_1}^{\updownarrow}, 
T_2, \dots, ^{\updownarrow}{T_{2m-1}}^{\updownarrow}, T_{2m})$
so that $m\geq 1$ and $T_i \in \cS$ for each $i = 0, 1, 2, \dots, 2m$, 
let $G=\circ (\cT^{\updownarrow})$,  
and let $(a_i,b_i)$ be the density characteristics of the tile $T_i$. 
Then the average degree of $G$ is equal to 
$\left(\sum_{i=0}^{2m} b_i\right)\big/\left(\sum_{i=0}^{2m} a_i\right)$. 
Hence if each $T_i$ has density $\tfrac{b}{a}$, 
then the average degree of $G$ is equal to their density. 
\end{lemma}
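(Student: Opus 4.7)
The plan is to prove the lemma by a straightforward additivity argument on density characteristics, first under $\otimes$ and then under cyclization $\circ$. The key observation is that the definition of $(a,b)$ is tailored precisely to make the bookkeeping of identified wall vertices come out right.

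First, I would note that the tile operations $^\updownarrow T$, $T^\updownarrow$, and $T^{\leftrightarrow}$ do not change the underlying graph of $T$, only the ordering or labelling of its walls, and hence they preserve the density characteristic. In particular, each $^\updownarrow T_{2i-1}^\updownarrow$ in the sequence $\cT$ has the same density characteristic $(a_{2i-1},b_{2i-1})$ as $T_{2i-1}$, so it suffices to analyse $\otimes \cT$ directly.

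The next step, which is the heart of the proof, is to show that if $T'=T\otimes T''$ with density characteristics $(a,b)$ and $(a'',b'')$, then $T'$ has density characteristic $(a+a'',b+b'')$. The join identifies, for each $j$, the right wall vertex $\rho_j$ of $T$ with the left wall vertex $\lambda'_j$ of $T''$. In the generic case both have degree at least $2$, so the merged vertex becomes an internal vertex of $T'$ whose degree equals $\deg(\rho_j)+\deg(\lambda'_j)$; this contributes $\tfrac12+\tfrac12=1$ to the vertex count and the full sum of degrees to the degree count, exactly matching the additivity of $(a,b)$. In the exceptional case when one (or both) of the two identified wall vertices has degree $1$, the join convention contracts the resulting degree-$2$ vertex into an edge between the two neighbours that lie in $T$ and $T''$; the contribution $(0,0)$ assigned to a degree-$1$ wall vertex is exactly the right bookkeeping, since no new internal vertex appears and the degrees of those two neighbours are already recorded in $b$ and $b''$ via the edge incident to them. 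By induction over the length $2m+1$ of the sequence, the join $\otimes \cT$ has density characteristic $\bigl(\sum_{i=0}^{2m} a_i,\;\sum_{i=0}^{2m} b_i\bigr)$.

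The final step handles the cyclization $G=\circ(\cT^\updownarrow)$. By definition, cyclization identifies, for each $j$, the $j$-th left wall vertex with the $j$-th right wall vertex of $\otimes\cT^\updownarrow$. The same pairing analysis as in the join step applies: each identified pair of wall vertices becomes one vertex of $G$, with degree equal to the sum of the two wall degrees, and the convention on degree-$1$ wall vertices again matches the degree-$2$ suppression. Hence $|V(G)|=\sum a_i$ and $\sum_{v\in V(G)} \deg_G(v)=\sum b_i$, so the average degree of $G$ is $\bigl(\sum b_i\bigr)/\bigl(\sum a_i\bigr)$, and the constant-density consequence follows because if every $b_i/a_i=b/a$, then $\sum b_i/\sum a_i=b/a$ as well.

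The main obstacle I anticipate is the careful case analysis of degree-$1$ wall vertices and the degree-$2$ contraction rule in the join: one must verify that the definition of $(a,b)$ absorbs all such boundary effects so that the additivity argument goes through uniformly for every tile in $\cS$. Once this verification is checked once on the 42 tiles (or, more conveniently, read off from their pictures), the rest of the proof is a routine induction.
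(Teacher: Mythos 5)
Your overall strategy is the same as the paper's: the definition of the density characteristic is tailored so that identified pairs of wall vertices are accounted for correctly, and one just tracks the identifications performed by $\otimes$ and $\circ$. However, your case analysis contains a genuine error at exactly the delicate point. You write that when ``one (or both)'' of the two identified wall vertices has degree $1$, the join contracts ``the resulting degree-$2$ vertex.'' That is only true when \emph{both} have degree $1$. If a degree-$1$ wall vertex $\rho_j$ is identified with a wall vertex $\lambda'_j$ of degree $d\geq 2$, the merged vertex has degree $d+1\geq 3$, is \emph{not} suppressed, and survives as a genuine vertex of the join; your bookkeeping then records $0+\tfrac12=\tfrac12$ in the vertex count and $0+d$ in the degree sum, whereas the true contribution is $1$ and $d+1$. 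So additivity of density characteristics is simply false for arbitrary pairs of tiles, and the lemma cannot be proved by the ``uniform'' argument you describe.

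What saves the lemma --- and what the paper's proof states explicitly --- is a structural fact about $\cS$: every tile has exactly one degree-$1$ vertex in each wall, positioned so that under the join and cyclization conventions (including the twists $^{\updownarrow}\cdot^{\updownarrow}$) a degree-$1$ wall vertex is \emph{always} identified with another degree-$1$ wall vertex. Hence the mixed case never occurs: every identified pair is either two degree-$1$ vertices (yielding a suppressed degree-$2$ vertex, contribution $(0,0)$) or two vertices of degree $\geq 2$ (contribution $(1,\,d_1+d_2)$), and the additivity goes through. Your closing remark that one must ``verify that the definition of $(a,b)$ absorbs all such boundary effects'' on the $42$ tiles points in the right direction, but the property to verify is not that the definition absorbs the mixed case (it does not); it is that the mixed case is excluded by the alignment of the degree-$1$ wall vertices in the two frames. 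With that fact supplied, the rest of your argument is correct and matches the paper's.
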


\begin{proof}
The wall vertices of adjacent tiles are identified producing the 
cyclization $G$ and possible resulting degree two vertices are suppressed.
Note that any degree 1 wall vertex (of a tile from $\cS$) is always identified with 
another degree 1 wall vertex, resulting in suppression of a degree 2 vertex
in the final graph~$G$. 
Hence the total number of vertices in $G$ equals $\sum_{i=0}^{2m} a_i$ 
and the sum of degrees
equals $\sum_{i=0}^{2m} b_i$, implying the claim.
\qed\end{proof}

Lemma \ref{lm:characteristics} implies that the lowest and highest
achievable average degrees in an infinite family of 
$2$-crossing-critical graphs are determined 
by the lowest and highest density of 
some tiles in $S$. This implies the following theorem:
\begin{theorem}
\label{th:2cross1simple}
A simple, $3$-connected, 
$2$-crossing-critical infinite family of graphs with average 
degree $r\in \QQ$ exists if and only if $r\in\left[3\tfrac{1}{5},4\right]$.
\end{theorem}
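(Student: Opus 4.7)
The plan is to combine Theorem~\ref{th:2critchar} with Lemma~\ref{lm:characteristics} to reduce the question to a density calculation over the nine simple tiles of $\cS$. By Theorem~\ref{th:2critchar}, any simple, $3$-connected, $2$-crossing-critical infinite family $\cF$ contains all but finitely many graphs in $\cG(\cS)$; and by Lemma~\ref{lm:characteristics} the average degree of each such graph is a weighted convex combination of the densities of its constituent simple tiles. Hence it suffices to determine the range of densities attainable by simple tiles in $\cS$, and to verify that every rational number in this range is realized by some infinite $2$-crossing-critical family.

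First I would enumerate the simple tiles of $\cS$ (those obtained from the two frames of Figure~\ref{fig:2crossframes} together with the pictures of Figure~\ref{fig:2crosspictures} that introduce no multiple edges) and, for each, compute its density characteristics $(a_i,b_i)$ from Definition~\ref{lm:characteristics}. A direct case check should reveal that the minimum density in this set is exactly $16/5$, realized by a ``sparse'' tile of $\cS$ (for example a tile whose bulk consists of degree-$3$ vertices, such as $T_a$), and that the maximum density is exactly $4$, realized by a denser tile. Combined with Lemma~\ref{lm:characteristics}, this immediately establishes the \emph{only if} direction: the average degree of any sufficiently large $G\in\cF\cap\cG(\cS)$ is a weighted average of values in $[16/5,4]$ and hence lies in $[16/5,4]$ itself, so the common average degree $r$ of $\cF$ must lie there.

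For the \emph{if} direction, fix $r=p/q\in[16/5,4]\cap\QQ$ and select simple tiles $T_-,T_+\in\cS$ of densities $16/5$ and $4$, with density characteristics $(a_-,b_-)$ and $(a_+,b_+)$. The extremal cases $r\in\{16/5,4\}$ are already covered by the constructions in the proof of Theorem~\ref{th:2crossD}(a), via an odd-length periodic sequence of $T_-$ (respectively $T_+$) and its inversion. For strictly interior $r$, I would seek positive integers $s,t$ satisfying
\[
\frac{s\,b_-+t\,b_+}{s\,a_-+t\,a_+}=\frac{p}{q},
\qquad\text{i.e.,}\qquad
s\bigl(qb_--pa_-\bigr)=t\bigl(pa_+-qb_+\bigr).
\]
Since $16/5<r<4$, both parenthesised coefficients are strictly positive, so this linear Diophantine equation admits infinitely many positive integer solutions. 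Taking suitable multiples we can arrange the total count $s+t$ to be odd, say $2m+1$, and then assemble a cyclically compatible sequence $\cT=(T_0,{}^{\updownarrow}T_1^{\updownarrow},\ldots,T_{2m})$ in $\cT_o(\cS)$ containing $s$ copies of $T_-$ and $t$ copies of $T_+$. By Theorem~\ref{th:2critchar} the graph $G=\circ(\cT^{\updownarrow})$ is a simple $3$-connected $2$-crossing-critical graph, and by Lemma~\ref{lm:characteristics} it has average degree precisely $r$.

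The main obstacle I anticipate is ensuring \emph{wall compatibility}: an arbitrary interleaving of $T_-$ and $T_+$ (with the prescribed alternating inversion pattern of Definition~\ref{df:2critchar}) must actually constitute a valid compatible sequence in $\cT_o(\cS)$, which in turn depends on matching wall sizes and on the frame/picture conventions of Figures~\ref{fig:2crossframes}--\ref{fig:2crosspictures}. Should $T_-$ and $T_+$ not be directly compatible, I would insert a third bridging simple tile of some intermediate density, compensating by a small perturbation of $s,t$ that preserves the target ratio $p/q$; similarly the parities of $s$ and $t$ may need to be coordinated with $m$ by doubling the counts if necessary. Once compatibility is in hand, the rest of the argument is purely density arithmetic through Lemma~\ref{lm:characteristics}.
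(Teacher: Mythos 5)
Your overall strategy---reduce to a density computation over the simple tiles of $\cS$ via Theorem~\ref{th:2critchar} and Lemma~\ref{lm:characteristics}, get the extremes $16/5$ (from $T_a$, characteristics $(5,16)$) and $4$ (from $T_c,T_d$, characteristics $(4,16)$), and realize interior rationals by a two-tile Diophantine mixture---is exactly the paper's strategy, and the ``only if'' direction and the boundary cases are handled correctly. Your worry about wall compatibility is a non-issue: by Definition~\ref{df:2critchar}, \emph{any} odd-length sequence of tiles from $\cS$ with the alternating inversion pattern cyclizes to a graph of $\cG(\cS)$, so arbitrary interleavings are admissible.

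The genuine gap is the parity step. You claim that ``taking suitable multiples we can arrange the total count $s+t$ to be odd,'' but this is false in general for a two-tile mixture. The positive solutions of $s\,(qb_--pa_-)=t\,(pa_+-qb_+)$ are exactly the integer multiples of the minimal solution $(s_0,t_0)$, so every achievable total is a multiple of $s_0+t_0$; if $s_0+t_0$ is even, no odd total exists. Concretely, for $r=32/9\in(16/5,4)$ the equation $16(s+t)/(5s+4t)=32/9$ forces $s=t$, hence $s+t$ is always even and no valid odd-length sequence in $\{T_a,T_c\}$ realizes this average degree. Your proposed remedy of ``doubling the counts'' preserves parity and cannot help. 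This is precisely why the paper's proof introduces a third simple tile $T_b$ of characteristics $(5,18)$ (density $18/5$) and solves a three-variable Diophantine system, obtaining multiplicities whose total $(6p-1)(2k-1)$ is guaranteed odd while the weighted density still equals $p/q$. Your fallback remark about inserting a ``bridging tile'' points in the right direction, but you invoke it only for the (nonexistent) compatibility problem and never establish that a three-tile perturbation can simultaneously preserve the exact ratio $p/q$ and repair the parity; that is the missing argument.
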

\begin{proof}
An elementary checking yields that the smallest density of a simple tile in $T$
is $3\tfrac{1}{5}$, achieved by $T_a$ of characteristics $(5,16)$, and the 
largest is $4$, achieved by $T_c$ and $T_d$ of characteristics $(4,16)$.
Lemma \ref{lm:characteristics} combined with Theorem \ref{th:2critchar} implies
that the average degree $r$ of an infinite family must be in the specified interval.
Sequences consisting of just one of these tiles establish the boundary cases
$r\in\left\{3\tfrac{1}{5},4\right\},$ which are thus easily achievable. 

Let $r=\tfrac{p}{q}\in(3\tfrac{1}{5},4)$. Since sequences consisting of only
$T_a$ and $T_c$, $T_d$ may violate the parity condition 
trying to establish average degree $r$, we add also some tiles $T_b$ 
with characteristics $(5,18)$ to the construction.
Solving diophantine equations, we find a solution
$$\cT (k)= (T_a, {^\updownarrow}{T_a}^{\updownarrow}, 
\dots, T_a, {^\updownarrow}{T_a}^{\updownarrow}, T_b, {^\updownarrow}{T_b}^{\updownarrow}, 
\dots, T_b, {^\updownarrow}{T_b}^{\updownarrow}, T_c, {^\updownarrow}{T_c}^{\updownarrow}, 
\dots, {^\updownarrow}{T_c}^{\updownarrow}, T_c)
,$$ 
where tiles
$T_a$ and ${^\updownarrow}{T_a}^{\updownarrow}$ together appear $\ell_k=(96q-24p-4)(2k-1)$-times, 
$T_b$~and~${^\updownarrow}{T_b}^{\updownarrow}$ together appear $m_k=8(2k-1)$-times,
and $T_c$, ${^\updownarrow}{T_c}^{\updownarrow}$ together appear $n_k=(30p-96q-5)(2k-1)$-times.

Since $3\tfrac{1}{5}<r<4$, we have $\ell_k,n_k>0$ for sufficiently
large $p,q$ (which are not required to be relatively prime) such
that~$r=\tfrac{p}{q}$.
The total length of the sequence $\cT (k)$ then is $(6p-1)(2k-1)$,
which is an odd number required for $\circ\cT(k)^\updownarrow\in\cG(\cS)$. 
Now Lemma \ref{lm:characteristics} and a routine calculation imply 
that $\cF_r=\dset{\circ\cT(k)^\updownarrow}{k\in\NN}$ is a family of 
simple, 3-connected, 2-crossing-critical graphs with average degree 
$\tfrac{96p(2k-1)}{96q(2k-1)}=\tfrac{p}{q}=r$.
\qed\end{proof}

\begin{theorem}
\label{th:2cross1}
A $3$-connected, $2$-crossing-critical infinite family with average degree $r\in \QQ$ exists 
if and only if $r\in\left[3\tfrac{1}{5},4\tfrac{2}{3}\right]$.
\end{theorem}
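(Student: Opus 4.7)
The plan mirrors the proof of Theorem~\ref{th:2cross1simple}, but now using the full set $\cS$ of $42$ tiles rather than just the nine simple ones. By Theorem~\ref{th:2critchar}, any infinite $3$-connected $2$-crossing-critical family $\cF$ can (up to a finite set of exceptions, which do not affect the average degree attainable in the limit) be assumed to lie in $\cG(\cS)$; and by Lemma~\ref{lm:characteristics}, the average degree of a graph $\circ(\cT^\updownarrow)\in\cG(\cS)$ is the weighted average of the densities of the constituent tiles from $\cS$. Consequently, the set of realizable average degrees is contained in the closed interval $[d_{\min}, d_{\max}]$, where $d_{\min}$ and $d_{\max}$ are the smallest and largest densities of tiles in $\cS$.

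The first step is a finite calculation: inspect all $42$ tiles of $\cS$ and compute their density characteristics $(a,b)$ to identify $d_{\min}$ and $d_{\max}$. The minimum is $d_{\min}=3\tfrac15$, attained already among the simple tiles by $T_a$ with characteristics $(5,16)$ (this was observed in the proof of Theorem~\ref{th:2cross1simple}). The new content is that among the non-simple tiles of $\cS$ some tile has density $d_{\max}=4\tfrac23=14/3$; I would identify such a tile --- call it $T^\star$ with characteristics proportional to $(3,14)$ --- from the list of $13$ pictures (it will be one containing doubled edges, such as a tile built from the ``heavy'' picture inserted into a frame). This establishes the upper bound $4\tfrac23$ for attainable average degrees and handles the two boundary cases $r\in\{3\tfrac15,4\tfrac23\}$ by taking cyclizations of sequences of the single tile $T_a$ or $T^\star$ respectively, of odd length (so that Definition~\ref{df:2critchar} is satisfied).

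For any rational $r=p/q\in(3\tfrac15,4\tfrac23)$, the construction follows the same Diophantine template as in Theorem~\ref{th:2cross1simple}: one mixes $T_a$ (density $16/5$) and $T^\star$ (density $14/3$), using in addition a third tile of intermediate density (e.g.\ $T_b$ with characteristics $(5,18)$) as a parity-buffer to force the total number of tiles to be odd as required by Definition~\ref{df:2critchar}. Concretely, for integer parameters $\ell_k, m_k, n_k$ I would solve the system
\begin{equation*}
\frac{16\ell_k + 18 m_k + 14 n_k}{5\ell_k + 5 m_k + 3 n_k} = \frac{p}{q},
\qquad \ell_k + m_k + n_k \equiv 1 \pmod 2,
\end{equation*}
take a positive integer solution (which exists for $k$ large enough, exactly as in Theorem~\ref{th:2cross1simple}, because $3\tfrac15 < r < 4\tfrac23$ means the coefficients of $\ell_k$ and $n_k$ in the equivalent linear Diophantine equation have opposite signs), form the sequence $\cT(k)$ composed of $\ell_k$ occurrences of $T_a$, $m_k$ of $T_b$, and $n_k$ of $T^\star$ alternated with their $^\updownarrow\!\cdot\,^\updownarrow$ inversions, and consider $\cF_r=\{\circ\,\cT(k)^\updownarrow : k\in\NN\}$. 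By Theorem~\ref{th:2critchar} every such graph is $3$-connected and $2$-crossing-critical, and by Lemma~\ref{lm:characteristics} its average degree is exactly $r$.

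The main obstacle will be the bookkeeping in the first step: verifying by exhaustive case analysis of all $42$ combinations of two frames with $13$ pictures (with both orientations) that no tile in $\cS$ has density greater than $14/3$, and exhibiting one that achieves it. Once $T^\star$ has been pinned down, the rest is a routine Diophantine argument identical in spirit to that of Theorem~\ref{th:2cross1simple}, with $T^\star$ playing the role previously played by $T_c$/$T_d$.
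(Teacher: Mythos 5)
Your proposal is correct and follows essentially the same route as the paper: the paper likewise reduces to $\cG(\cS)$ via Theorem~\ref{th:2critchar} and Lemma~\ref{lm:characteristics}, identifies the densest tile as the non-simple $T_e$ with characteristics $(3,14)$ (your $T^\star$), and realizes each rational $r\in\left(3\tfrac15,4\tfrac23\right)$ by an explicit Diophantine mix of $T_a$, a fixed-count buffer tile, and $T_e$ of odd total length. The only cosmetic difference is your choice of $T_b$ rather than the paper's $T_c$ as the parity buffer, which is immaterial.
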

\begin{proof}
The argument is as in the previous proof, with an additional observation
that the largest possible density $4\tfrac{2}{3}$ among non-simple tiles in $\cS$ 
is obtained by the tile $T_e$ of characteristics $(3,14)$. 
For $r=\tfrac{p}{q}\in\left(3\tfrac{1}{5},4\tfrac{2}{3}\right)$, we use the sequence 
$$\cT (k)= (T_a, {^\updownarrow}{T_a}^{\updownarrow}, \dots, T_a, 
{^\updownarrow}{T_a}^{\updownarrow}, T_c, {^\updownarrow}{T_c}^{\updownarrow}, \dots, 
{^\updownarrow}{T_c}^{\updownarrow}, T_c, {^\updownarrow}{T_e}^{\updownarrow}, T_e,  
\dots, {^\updownarrow}{T_e}^{\updownarrow}, T_e),$$ 
where $T_a$ and ${^\updownarrow}{T_a}^{\updownarrow}$ together
appear $(112q-24p-4)(2k-1)$-times,
$T_c$ and ${^\updownarrow}{T_c}^{\updownarrow}$ together
appear $11(2k-1)$-times, and
$T_e$ and ${^\updownarrow}{T_e}^{\updownarrow}$ together
appear $(40p-128q-8)(2k-1)$-times.
The length of this sequence is $(16p-16q-1)(2k-1)$
which is an odd number, 
and the average degree of each graph $\circ(\cT(k)^\updownarrow)$ is $r$.
\qed\end{proof}

The technique from the proofs of Theorems~\ref{th:2cross1simple}
and~\ref{th:2cross1} can be generalized to 
the following auxiliary claim.
Recall the sets $\cT_o(\cS),\cT_e(\cS)$
of odd and even tiles from Definition~\ref{df:2critchar}
and note that every tile of $\cT_o(\cS)\cup\cT_e(\cS)$ 
has precisely one wall vertex of degree~$1$ in each wall, 
and that such vertices are identified and contracted 
when joining tiles.

\begin{lemma}
\label{cr:Dinterval}
Let $T_1,T_2\in\cT_o(\cS)\cup\cT_e(\cS)$ be two tiles,
let the density characteristics of $T_i$, $i=1,2$ be $(a_i,b_i)$
and $\tfrac{b_1}{a_1} \leq \tfrac{b_2}{a_2}$.
Let $D_i$ be the set of degrees of the non-wall vertices of 
$T_i$, let $d_i^l$, $d_i^r$ be the degrees of its left and right 
wall vertices that are not~$1$, and moreover let
$\bar D_i:=D_i\cup\{d_i^l+d_i^r\}$ and $D_{12}:=\{d_1^r+d_2^l,d_1^l+d_2^r\}$. 

Assume $r\in\left[\tfrac{b_1}{a_1},\tfrac{b_2}{a_2}\right]$ 
is a rational number.
Then there exists a $D$-max-uni\-versal family $\cF(D,r)$ of\/ $3$-connected
$2$-crossing-critical graphs from $\cG(\cS)$ with average degree $r$, 
whenever (at least) one of the conditions (i)--(\ref{it:Dlast}) is satisfied:
\begin{enumerate}[i)]
\item $r=\tfrac{b_i}{a_i}$ for some $i\in\{1,2\}$ and $D=\bar D_i$ holds;
\item $r=\tfrac{b_1}{a_1}=\tfrac{b_2}{a_2}$ and
  $D$ is one of $\bar D_1\cup \bar D_2$, $D_1\cup D_2\cup D_{12}$,
  $D_1\cup \bar D_2\cup D_{12}$, $\bar D_1\cup D_2\cup D_{12}$
  or $\bar D_1\cup \bar D_2\cup D_{12}$;
\item $r\in\left(\tfrac{b_1}{a_1},\tfrac{b_2}{a_2}\right)$
 and one of $D=\bar D_1\cup\bar D_2$ or                 
  $D=\bar D_1\cup \bar D_2\cup D_{12}$ holds;
\item $r\in\left(\tfrac{b_1}{a_1},\tfrac{b_2}{a_2}\right)$,
  and one of the following special subcases holds:\\
  $r<\tfrac{b_1+b_2}{a_1+a_2}$ and $D=\bar D_1\cup D_2\cup D_{12}$, or
  $r=\tfrac{b_1+b_2}{a_1+a_2}$ and $D= D_1\cup D_2\cup D_{12}$, or
  $r>\tfrac{b_1+b_2}{a_1+a_2}$ and $D= D_1\cup\bar D_2\cup D_{12}$.
\label{it:Dlast}
\end{enumerate}
Moreover, if both $T_1$ and $T_2$ are simple graphs, then also the
graphs in $\cF(D,r)$ are simple.
\end{lemma}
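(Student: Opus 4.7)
The approach centers on Lemma~\ref{lm:characteristics}, which gives a clean formula for the average degree of a cyclization in terms of the density characteristics of the constituent tiles. Writing $r=p/q$ in lowest terms, if I build a sequence with $\ell$ occurrences of $T_1$-type tiles (either $T_1$ or $^\updownarrow T_1^\updownarrow$) and $n$ occurrences of $T_2$-type tiles, then the requirement that the cyclization has average degree exactly $r$ becomes the linear Diophantine equation $\ell(qb_1-pa_1)+n(qb_2-pa_2)=0$. Since $b_1/a_1\leq r\leq b_2/a_2$, the two coefficients have opposite signs (or are zero at the endpoints), so the equation admits a one-parameter family of nonnegative integer solutions $(\ell_k,n_k)=k(\ell_0,n_0)$, from which I take those with $\ell_k+n_k$ odd to satisfy the $\cG(\cS)$ requirement of Theorem~\ref{th:2critchar}.

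Once multiplicities are fixed, the task is to \emph{arrange} the tiles within the sequence so that the vertex-degree distribution matches the prescribed $D$. Three atomic observations govern this: interior (non-wall) vertices of $T_i$ always contribute degrees in $D_i$; a junction between two tiles of the same type $T_i$ (in the alternating pattern $T_i,{}^\updownarrow T_i^\updownarrow,\dots$) merges their non-degree-$1$ wall vertices into a single vertex of degree $d_i^l+d_i^r$, adding the element of $\bar{D}_i\setminus D_i$; and a junction between a $T_1$-type tile and a $T_2$-type tile produces a vertex of degree in $D_{12}$. So for case (i) I use a pure sequence of $T_i$-type tiles, which realises $\bar{D}_i$; for case (iii) with $D=\bar{D}_1\cup\bar{D}_2$ I group the $T_1$- and $T_2$-type tiles into exactly two long blocks, so the two block-boundary $D_{12}$ vertices are bounded and do not enter $D$; for $D=\bar{D}_1\cup\bar{D}_2\cup D_{12}$ I interleave many smaller blocks of each type so that $D_{12}$ vertices also scale with $k$.

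Case (iv) corresponds to nearly-alternating arrangements $(T_1,{}^\updownarrow T_2^\updownarrow,T_1,{}^\updownarrow T_2^\updownarrow,\dots)$, in which essentially every junction contributes to $D_{12}$. Exact alternation realises the ``interior'' average $(b_1+b_2)/(a_1+a_2)$ and yields $D=D_1\cup D_2\cup D_{12}$; if $r$ is smaller (respectively larger), the Diophantine bookkeeping forces extra runs of $T_1$-type (respectively $T_2$-type) tiles, which inject exactly the additional wall-sum degree $d_1^l+d_1^r$ (respectively $d_2^l+d_2^r$), producing $\bar{D}_1\cup D_2\cup D_{12}$ (respectively $D_1\cup\bar{D}_2\cup D_{12}$). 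Case (ii), where $b_1/a_1=b_2/a_2=r$, is easier: any mixture of tile types automatically has average degree $r$, so each listed $D$ is realised by the block structure already used in the other cases. Max-universality follows because the multiplicities of every degree in $D$ grow linearly in $k$, while any leftover degrees from boundary effects (finitely many block boundaries, or one or two closure junctions arising from cyclization) remain bounded by a constant independent of $k$; criticality and $3$-connectedness come from Theorem~\ref{th:2critchar}, and simplicity is preserved since joining and cyclizing simple tiles cannot introduce parallel edges at the walls.

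The main obstacle I expect is the delicate bookkeeping around the cyclic closure and the parity of the sequence length: I need the final sequence to have odd length in $\cT_o(\cS)\setminus\cS$, and the block arrangement must remain cyclically compatible with the alternating $^\updownarrow$ pattern demanded by Definition~\ref{df:2critchar} while still realising \emph{exactly} the prescribed $D$ (with no spurious frequent degrees). In the three subcases of (iv), verifying that the Diophantine system forces precisely one of the two possible ``excess'' blocks and not both, and that the sign of $r-(b_1+b_2)/(a_1+a_2)$ cleanly selects between $\bar{D}_1\cup D_2\cup D_{12}$ and $D_1\cup\bar{D}_2\cup D_{12}$, is the place where the argument is tightest and requires an explicit solution template analogous to those used in the proofs of Theorems~\ref{th:2cross1simple} and~\ref{th:2cross1}.
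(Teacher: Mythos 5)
Your overall architecture matches the paper's: use Lemma~\ref{lm:characteristics} to reduce the average-degree requirement to a homogeneous linear Diophantine condition on the multiplicities of $T_1$ and $T_2$, then realise each prescribed $D$ by choosing the \emph{arrangement} (pure sequences for (i), two long blocks versus fine interleaving for (iii), near-alternation with a single excess run whose type is selected by the sign of $r-\tfrac{b_1+b_2}{a_1+a_2}$, equivalently by which of $k_1=qb_2-pa_2$ and $k_2=pa_1-qb_1$ is larger, for (iv)). The three ``atomic'' degree contributions you identify (interior degrees in $D_i$, same-type junctions giving $d_i^l+d_i^r$, cross-type junctions giving $D_{12}$) and the max-universality/boundedness argument are exactly the paper's.

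However, your handling of the parity/membership-in-$\cG(\cS)$ issue is a genuine gap, and you have half-noticed it yourself. You propose to ``take those solutions with $\ell_k+n_k$ odd,'' but the solution set of the homogeneous equation is a single ray $j\cdot(\ell_0,n_0)$, so if $\ell_0+n_0$ is even there are \emph{no} admissible solutions---and this actually happens in your own case (iv): for $r=\tfrac{b_1+b_2}{a_1+a_2}$ the reduced solution is $(\ell_0,n_0)=(1,1)$, forcing even totals. Moreover, the parity condition of Definition~\ref{df:2critchar} counts tiles of $\cS$, not occurrences of $T_1,T_2$: since $T_1,T_2\in\cT_o(\cS)\cup\cT_e(\cS)$ may themselves be joins of several $\cS$-tiles (and the later application in Theorem~\ref{th:2ccDmu} uses composite tiles such as $T_b\otimes{}^\updownarrow T_a^\updownarrow$), the oddness of $\ell_k+n_k$ is neither necessary nor sufficient. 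The paper resolves this with an ingredient your proposal lacks: it builds the sequence inductively, inverting each next tile according to whether the running join is in $\cT_e(\cS)$, and, if the final join is still even, it appends one \emph{fixed} auxiliary tile $U_0\in\cT_o(\cS)$ chosen (via Theorem~\ref{th:2cross1}, or Theorem~\ref{th:2cross1simple} in the simple case) so that $\circ(U_0{}^\updownarrow)$ itself has average degree exactly $r$; this corrects the parity without perturbing either the average degree or, being a single bounded tile, the $D$-max-universality. Without something playing the role of $U_0$, your construction does not produce graphs in $\cG(\cS)$ in all subcases.
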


\begin{proof}
To simplify the arguments of the coming proof, we resolve beforehand the necessary 
composition and parity issues with respect to our use of Theorem~\ref{th:2critchar}.
That is, for an integer $m\geq3$, assume we have already defined a sequence 
$\cU=(U_1,U_2,\dots,U_m)\in \{T_1,T_2\}^m$ from the given tiles,
such that $T_1$ occurs $\ell$ times in~$\cU$ and
$r=\tfrac{\ell b_1+(m-\ell)b_2}{\ell a_1+(m-\ell)a_2}$.
Then we claim that there exists $G_m \in \cG(\cS)$ of average degree exactly~$r$,
such that $G_m$ is composed of the tiles of $\cU$ or their inverses in the
given order and of possibly one additional tile $U_0\in\cT_o(\cS)$.
Moreover, this additional tile $U_0$ is independent of $m$ (and hence it will 
not affect $D$-max-universality for any~$D$).

Indeed, we can define $\cU'_1:=(U_1)$ and for $i\geq1$ inductively
$\cU'_{i+1}:=(\cU'_i,\>U_{i+1})$ if $(\otimes\, \cU'_i) \in\cT_e(\cS)$
and $\cU'_{i+1}:=(\cU'_i,\,{}^\updownarrow U_{i+1}{}^\updownarrow)$ otherwise.
If $\otimes \cU'_m$ is odd, then we finish by picking the cyclization
$G_m:=\circ(\cU'_m{}^\updownarrow)$, which satisfies the claimed properties
by Theorem~\ref{th:2critchar} and Lemma~\ref{lm:characteristics}.
If $\otimes \cU'_m$ is even, then let $U_0\in\cT_o(\cS)$ be any fixed tile such that
$\circ(U_0{}^\updownarrow)$ is a $2$-crossing-critical graph with average degree
$r$, which exists by Theorem \ref{th:2cross1}.
We now analogously finish by setting
$G_m:=\circ(\cU'_m,\,U_0{}^\updownarrow)$.
Note that $G_m$ would be simple if both $T_1,T_2$ are simple, since then we
could choose also $U_0$ as simple by Theorem~\ref{th:2cross1simple}.

Consequently, assuming that we can define such a sequence
$\cU\in \{T_1,T_2\}^m$ as above for all $m\in M$ of some infinite index set~$M$,
the set $\cF(D,r):=\cG_r(\cU)=\{G_m:m\in M\}$
would be the claimed infinite family of $3$-connected
$2$-crossing-critical graphs from $\cG(\cS)$ with average degree $r$,
where $D$ depends on particular $\cU$ and is subject to 
further case analysis  which will conclude the proof.

\medskip
Assume (i); $D=\bar D_i$.
We simply set $\cU=(T_i,T_i,\dots,T_i)$ to be a list of 
length at least three and finish by the previous.

Assume (ii).
If $D=\bar D_1\cup \bar D_2$, then it suffices to set
$\cU=(T_1,T_1\dots,T_1,$ $T_2,T_2\dots,T_2)$.
On the other hand, if $D=D_1\cup D_2\cup D_{12}$, then we set 
$\cU'=(T_1,T_2,T_1,T_2,\dots,T_1,T_2)$.
The remaining three cases of (ii) follow from the latter sequence $\cU'$ simply by replacing
each $T_2$ with $T_2,T_2$ (duplicating), or each $T_1$ with $T_1,T_1$, or both.

Assume (iii) where $r=\frac pq$, and choose any $\ell\in\NN$.
Define $k_1=(qb_2-pa_2)$, $k_2=(pa_1-qb_1)$, and $m=(k_1+k_2)\ell$. 
Note that $r\in\left(\tfrac{b_1}{a_1},\tfrac{b_2}{a_2}\right)$
implies $k_1,k_2>0$.
For $D=\bar D_1\cup\bar D_2$, take $\cU= (T_1, T_1,\dots, T_1, 
T_1,T_2, T_2, \dots,T_2, T_2)\in\{T_1,T_2\}^m$ where
$T_i$ appears $k_i\ell$ times for $i=1,2$. 
A routine calculation verifies that the family $\cG_r(\cU)$ 
for $M=\dset{(k_1+k_2)j}{j\in\NN}$ is as desired, with the average degree equal to~$r$. 
For $D=\bar D_1\cup\bar D_2\cup D_{12}$, reorder $\cU$ into 
$\cU':=(T_1,T_1,T_2,T_2,T_1,T_1,T_2,T_2, \dots,
	T_i,T_i,\dots,T_i)\in\{T_1,T_2\}^m$
where $i\in\{1,2\}$ is such that $k_i>k_{3-i}$, or $i=2$ if $k_1=k_2$,
and take the family $\cG_r(\cU')$ instead.

Assume (iv), and consider $k_1$ and $k_2$ as in (iii).
Note that $r<\tfrac{b_1+b_2}{a_1+a_2}$,
  $r=\tfrac{b_1+b_2}{a_1+a_2}$,
  $r>\tfrac{b_1+b_2}{a_1+a_2}$
is, in order, equivalent to $k_1>k_2$, $k_1=k_2$, $k_1<k_2$.
In this order, we can hence choose the following sequences
as reorderings of previous $\cU$ such that the claimed three cases will be
covered:
$\cU^1= (T_1,T_2,T_1,T_2,\dots,T_1,T_2,T_1,T_1,\dots,T_1)\in\{T_1,T_2\}^m$
for $k_1>k_2$,~
$\cU^2= (T_1,T_2,T_1,T_2,\dots,T_1,T_2,T_2,\dots,T_2,T_2)\in\{T_1,T_2\}^m$
for $k_2>k_1$, and
$\cU^0= (T_1,T_2,T_1,T_2,\dots,T_1,T_2,T_1,T_2)\in\{T_1,T_2\}^m$
for $k_1=k_2$.
\qed
\end{proof}

We finish with the last theorem which summarizes results of this and previous section.
\begin{theorem}
\label{th:2ccDmu}
Let $D$ be such that there exists a $D$-max-universal $3$-connected 
$2$-crossing-critical family. Then let $I_{D}$ (or $I_{D}^s$ for simple graphs) 
be the set of all rational numbers, such that there is a $D$-max-universal $3$-connected 
$2$-crossing-critical (simple) family with average degree $r$ if and only if $r\in I_D$ 
($r\in I_D^s$). Then $I_{D}^s$ and $I_D$ are rational intervals and moreover:
\begin{center}
\vspace*{3mm}%
\setlength\extrarowheight{2pt}
   \begin{tabular}{| c || c | c |}
     \hline
     $D$ & $I_{D}^s$ & $I_{D}$ \\ \hline \hline
     $\{3, 4\}$ & ~$[\frac {16}5, \frac {18}5]$~ & ~$[\frac {16}5, \frac{15}4]$~ \\ \hline
     $\{3, 5\}$ & $\{\frac {17}5\}$ & $[\frac {17}5, \frac{11}3]$ \\ \hline
     $\{3, 6\}$ & $\{\frac {18}5\}$ & $\{\frac {18}5\}$ \\ \hline
     $\{4, 5\}$ & $\emptyset$ & $\{\frac 92\}$ \\ \hline
     $\{4, 6\}$ & $\emptyset$ & $\{\frac {14}3\}$ \\ \hline
     $\{3, 4, 5\}$ & $(\frac {16}5, 4]$ & $(\frac {16}5, \frac 92)$ \\ \hline
     $\{3, 4, 6\}$ & $(\frac {16}5, 4]$ & $(\frac {16}5, \frac {14}3)$ \\ \hline
     $\{3, 5, 6\}$ & $(\frac {17}5, \frac {18}5)$ & $(\frac {17}5, \frac{11}3)$ \\ \hline
     $\{4, 5, 6\}$ & $\emptyset$ & $(\frac 92, \frac {14}3)$ \\ \hline
     $\{3, 4, 5, 6\}$ & $(\frac {16}5, 4]$ & $(\frac {16}5, \frac {14}3)$ \\ \hline
   \end{tabular}
\end{center}
\end{theorem}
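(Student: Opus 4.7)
The plan is to reduce the theorem to a finite case analysis, using Lemma \ref{cr:Dinterval} for the achievability direction (construction of families realizing each $r\in I_D$) and Lemma \ref{lm:characteristics} together with Theorem \ref{th:2critchar} for the interval-bound direction (ruling out $r\notin I_D$). As the first step, I would compile a catalogue of the tiles of $\cS$, tabulating for each tile its density characteristics $(a,b)$, its density $b/a$, the set $D_i$ of non-wall vertex degrees, and the two wall-vertex degrees $d_i^l, d_i^r$. All the endpoint fractions appearing in the table, namely $\tfrac{16}{5}, \tfrac{17}{5}, \tfrac{18}{5}, \tfrac{15}{4}, 4, \tfrac{11}{3}, \tfrac{9}{2}$, and $\tfrac{14}{3}$, arise as densities of specific tiles in this catalogue: simple tiles contribute densities only in $[\tfrac{16}{5},4]$ (with $T_a,T_b,T_c,T_d$ already covering the simple extremes), while the non-simple tiles such as $T_e$ and $T_f$ supply the larger values up to $\tfrac{14}{3}$.

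For each of the ten rows of the table, I would then execute two matched arguments. For achievability, given a target rational $r$ in the claimed interval, I would pick a tile pair $T_1,T_2$ from the catalogue whose densities bracket $r$ and whose non-wall degree sets, together with any wall-sum degrees that occur, match $D$ precisely; then I apply the appropriate case of Lemma \ref{cr:Dinterval}, namely case (i) or (ii) for closed endpoints and singleton intervals, and case (iii) or (iv) for interior rationals. The parity handling and the guarantee that the resulting graph lies in $\cG(\cS)$ are already absorbed into the proof of that lemma via the auxiliary odd tile $U_0$. Conversely, for the upper-bound direction, I would use that by Theorem \ref{th:2critchar} all but finitely many members of any $D$-max-universal family lie in $\cG(\cS)$; any tile whose non-wall degrees, or whose wall-sum degree with its neighbour in the sequence, falls outside $D$ may appear only boundedly often (else an extraneous degree would occur frequently). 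By Lemma \ref{lm:characteristics}, the average degree of a large member is then a weighted average of densities of ``$D$-admissible'' tiles, and hence lies in the closed interval between the minimum and maximum such densities. Openness versus closedness at an endpoint depends on whether a single extremal tile already supplies every prescribed degree of $D$ (closed) or whether at least one auxiliary tile of different density must always be present to witness some $d\in D$ that the extremal tile lacks (open).

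The simple-graph column follows the same template under the extra constraint that only simple tiles may appear. This immediately forces $I_D^s=\emptyset$ for the rows $\{4,5\}$, $\{4,6\}$, $\{4,5,6\}$, since the required density would exceed the simple bound $4$; it collapses $I_{\{3,6\}}^s$ to the singleton $\{\tfrac{18}{5}\}$, because $T_b$ is the only simple tile with non-wall degrees in $\{3,6\}$; and it narrows intervals such as $I_{\{3,4\}}^s=[\tfrac{16}{5},\tfrac{18}{5}]$, realizable through combinations of $T_a$, $T_b$, $T_c$. In the general (possibly non-simple) column, admitting $T_e$ and $T_f$ widens the corresponding intervals up to $\tfrac{14}{3}$ or $\tfrac{9}{2}$, and the singleton entries $\{\tfrac{9}{2}\}$ and $\{\tfrac{14}{3}\}$ for $\{4,5\}$ and $\{4,6\}$ follow from these being the sole densities of tiles whose full degree footprint lies in the prescribed set.

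The main obstacle is the bookkeeping: twenty entries must each be individually verified, with particular care that in each chosen tile pair the union $\bar D_1\cup\bar D_2\cup D_{12}$ (or the appropriate variant from Lemma \ref{cr:Dinterval}) equals $D$ exactly rather than a proper superset, so that max-universality is clean. Since $|\cS|=42$ and Lemma \ref{cr:Dinterval} already encapsulates the parity, composition, and max-universality technicalities, each entry reduces to a routine density/degree computation once the tile catalogue is assembled; the work is broad but shallow, and no new technical ideas beyond those of Sections \ref{sc:2cc} and the present section are needed.
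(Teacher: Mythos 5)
Your overall strategy is the one the paper itself follows: catalogue the tiles of $\cS$ by density characteristics and degree footprint, realize each rational $r$ in the claimed interval via Lemma~\ref{cr:Dinterval}, and bound the achievable average degrees via Theorem~\ref{th:2critchar} together with Lemma~\ref{lm:characteristics}. The construction half of the plan is sound.

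The gap is in the bounding direction. You conclude that the average degree of a large member ``lies in the closed interval between the minimum and maximum such densities'' of the $D$-admissible tiles, with admissibility judged tile-by-tile. That is not sufficient for several rows: the adjacency (wall-sum) constraints you mention only in passing can strictly shrink the achievable set below the naive min/max-density interval, and these cases must be argued separately. Concretely, for $D=\{3,4,5\}$ the densest tile whose own degrees lie in $D$ is $T_e$ of density $\frac{14}{3}$, but $T_e$ joined to itself forces degree-$6$ vertices, and no companion tile of wall degree $2$ and density at least $\frac{13}{3}$ exists, so the true supremum is $\frac92$ (approached but never attained, whence the open right end) rather than $\frac{14}{3}$. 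Similarly, for simple $D=\{3,5\}$ neither $T_a$ nor $T_b$ may be self-adjacent frequently, forcing strict alternation and collapsing $I^s_D$ to the single point $\frac{17}{5}$; for $D=\{3,5,6\}$ analogous constraints make both interval ends open. Executed literally, your criterion would produce wrong entries in these rows. A smaller slip: for simple $D=\{3,4\}$ the realizing tiles are $T_a$ and $T_g$; the tiles $T_b$ and $T_c$ you name cannot be used there, since joining them frequently introduces degrees outside $\{3,4\}$ and destroys max-universality (the interval $[\frac{16}{5},\frac{18}{5}]$ itself is nevertheless correct).
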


\vspace*{3mm}%
\begin{proof}
The list of possible degree sets $D$ was established by Theorem 
Theorem~\ref{th:2crossD}\,\ref{it:a_simple}) for simple 2-crossing-critical graphs 
and by Theorem~\ref{th:2crossD}\,\ref{it:b_general}) for general $2$-crossing-critical 
graphs. We proceed as follows: for each such set $D$, we examine the 
set of tiles $\cS$. If a tile $T$ has its
degrees in $D$, then $T$ is a candidate for the construction. 
Among all candidates, we will focus on the ones
with the smallest and with the largest density.
Lemma \ref{lm:characteristics} implies that for $r$ 
outside of the open (closed) interval $J_D$ ($\overline{J_D}$) 
defined by these min/max densities, a $D$-max-universal 
family of $2$-crossing-critical graphs 
with prescribed average degree does not exist.

On the other hand, if the chosen two tiles and their joins
produce all the degrees in $D$, Corollary~\ref{cr:Dinterval}
immediately yields a $D$-max-universal $2$-crossing-critical family
with prescribed average degree $r$ for any $r\in J_D$. 
Though, if $r\in\overline{J_D}\setminus J_D$ is one of the interval ends,
special care has to be taken since only tiles with the highest 
(lowest, respectively) density themselves need to produce all the suitable degrees.
As we will see, the latter is not always possible, especially
if there is only one tile with the highest (lowest) density.

The rest of the proof relies on a mostly simple case checking.
Obtaining a list of candidate tiles is routine, as well as 
finding the one(s) with smallest and largest density. 
See below for selected details.
For each possible set $D$, we record the corresponding min- and max-tiles 
$T_1$ and $T_2$ which we use
(cf.~Figure~\ref{fig:2crossexamples}) in the following table:

\begin{center}
\setlength\extrarowheight{3pt}
   \begin{tabular}{| c || c |c |c || c |c |c |c  ||}
     \hline
     $D$ & $I_{D}^s$ & $T_1$ & $T_2$  & $I_{D}$ & $T_1$ & $T_2$ \\ \hline \hline

     $\{3, 4\}$ & ~$[\frac {16}5, \frac {18}5]$~ & ~~$T_a$~~ & $T_g$ & $[\frac {16}5, \frac{15}4]$ & $T_a$ & $T_n$ \\ \hline

     $\{3, 5\}$ &  $\{\frac {17}5\}$ & ~$T_a$~ & $T_b$& $[\frac {17}5, \frac{11}3]$ &
     ~$T_b\otimes{}^\updownarrow T_a^\updownarrow$~ & $T_p$ \\ \hline

     $\{3, 6\}$ & $\{\frac {18}5\}$ & $T_b$ & $T_b$ & $\{\frac {18}5\}$ & $T_b$ & $T_b$ \\ \hline

     $\{4, 5\}$ & $\emptyset$ & & & $\{\frac 92\}$ & $T_f$ & $T_f$ \\ \hline

     $\{4, 6\}$ & $\emptyset$ & & & $\{\frac {14}3\}$ & $T_e$ & $T_e$  \\ \hline

     $\{3, 4, 5\}$  & $(\frac {16}5, 4]$ & $T_a$ & $T_d$ & 
     $(\frac {16}5, \frac 92)$ & $T_a$ & $T_f$ \\ \hline

     $\{3, 4, 6\}$ & $(\frac {16}5, 4]$ & $T_a$ & 
     $T_c\otimes{}^\updownarrow T_d^\updownarrow$ & $(\frac {16}5, \frac {14}3)$ & $T_a$ & $T_e$ \\ \hline

     $\{3, 5, 6\}$ & $(\frac {17}5, \frac {18}5)$ & $T_a$ & $T_b$ 
	& $(\frac {17}5, \frac{11}3)$ & $T_b\otimes{}^\updownarrow T_a^\updownarrow$  & $T_p$ \\ \hline

     $\{4, 5, 6\}$ & $\emptyset$ & & & $(\frac 92, \frac {14}3)$& $T_f$ & $T_e$ \\ \hline
     
     $\{3, 4, 5, 6\}$ & $(\frac {16}5, 4]$ & $T_a$ & \mbox{\small$T_c\otimes{}^\updownarrow T_d^\updownarrow\otimes T_d$} &
     ~$(\frac {16}5, \frac {14}3)$~ &     $T_a$ & ~~$T_e$~~ \\ \hline
   \end{tabular}
\end{center}
\vspace*{3mm}%

We first consider simple graphs (the left-hand side of the table).
For instance, if $D=\{3,4\}$,
then the candidate tiles must not contain wall vertices of degree greater
than $2$ (since those would produce vertices of degree $5$ or higher,
no matter what are other tiles in the sequence),
and so we only have $T_a$ and $T_g$ with densities $\frac{16}5$ and
$\frac{18}5$, respectively.
In this case, the whole closed interval $[\frac {16}5, \frac {18}5]$
can be achieved by Lemma~\ref{cr:Dinterval} (iii) and (i).
On the other hand, in subsequent cases $D\supsetneq\{3,4\}$,
the lowest average degree $\frac {16}5$ can never be achieved
since $T_a$ is the only tile of density $\frac{16}5$ and by itself 
it cannot produce degrees $5$ or $6$.
All the cases $D\supsetneq\{3,4\}$ thus lead to the interval
$(\frac {16}5, 4]$, where the upper bound follows from
Theorem~\ref{th:2cross1simple} and is achieved by suitable combinations
of tiles $T_c$ and~$T_d$.

If $D=\{3,5\}$,
then the candidate tiles are $T_a$ and $T_b$ (other tiles contain 
a degree-$4$ vertex) of densities $\frac{16}5$ and $\frac{18}5$.
Though, since degrees $4$ and $6$ cannot occur frequently in the graphs,
it is not possible to combine $T_a$ with itself or $T_b$ with itself.
Hence the only admissible average degree is
$\frac {17}5=\frac {16+18}{5+5}$ obtained by the equality subcase of
Lemma~\ref{cr:Dinterval} (iv).
The case of $D=\{3,5,6\}$ is similar in that we cannot combine $T_a$ with
itself, and even the lower bound of $\frac {17}5$ cannot be achieved since
we need to frequently combine $T_b$ with itself to produce degree~$6$.
Likewise, the max average degree $\frac {18}5$ cannot be achieved
for $D=\{3,5,6\}$ since joining $T_b$ only with itself does not produce degree~$5$.
All values in $(\frac {17}5, \frac {18}5)$ get produced by
Lemma~\ref{cr:Dinterval} (iv).
The remaining case $D=\{3,6\}$ follows easily.

\medskip
Consider now also non-simple graphs (the right-hand side of the table).
This case brings a new possibility that $3\not\in D$, which we resolve first.
The assumption $3\not\in D$ leaves only two candidate tiles $T_e$ and 
$T_f$ of densities $\frac{14}3$ and $\frac92$
(where $T_e$ is at the same time the overall unique ``densest'' tile on our list).
Combining $T_e$ with itself produces exclusively degrees $4$ and $6$,
hence the result for $D=\{4,6\}$.
We similarly get the unique solution for $D=\{4,5\}$ with~$T_f$.
In order to produce all degrees $D=\{4,5,6\}$,
we have to also combine $T_e$ with $T_f$, and so the solution then
is the open interval $(\frac92, \frac {14}3)$
by Lemma~\ref{cr:Dinterval} (iii).

For $3\in D$, the solution intervals $I_D$ obviously contain the
corresponding simple-graph solutions $I_D^s$ and, actually,
a routine case analysis shows that the lower bounds
(left ends of the intervals) are always the same as for simple graphs.
We in particular emphasise that for $D=\{3,5\}$ and $D=\{3,5,6\}$,
the previous (simple) construction by Lemma~\ref{cr:Dinterval} (iv)
has actually defined the combined tile
$T_b\otimes{}^\updownarrow T_a^\updownarrow$
which we will now use as the new min-tile in further construction.

Concerning the upper bounds of the intervals $I_D$ (in this non-simple case),
we easily check that the highest density $\frac{14}3$ of all is achieved
only by $T_e$, which can by itself produce only $D=\{4,6\}$ (as mentioned above).
In the other cases of $D\supsetneq\{4,6\}$, the right end $\frac{14}3$ 
of $I_D$ thus must be open and the rest follows by Lemma~\ref{cr:Dinterval} (iii).
If $4\not\in D$, then the max-density candidates are $T_p$ of $\frac{11}3$ 
or $T_b$ of $\frac{18}5$ (where $\frac{18}5<\frac{11}3$), 
depending on whether $5\in D$ since $T_p$ contains a degree-$5$ vertex.
The rest is mostly similar, except the case of $D=\{3,5,6\}$.
In the latter case, since $T_p$ combined with itself does not produce degree~$6$,
the right end of $I_D$ is open for $D=\{3,5,6\}$.
Moreover, we cannot simply cover the whole interval
$I_D=(\frac{17}5,\frac{11}3)$ in one shot (we would miss degree~$6$),
and so we make a union of the simple case together with the interval
$(\frac{52}{15},\frac{11}3)$ which is covered by Lemma~\ref{cr:Dinterval}
(iii) for tiles $T_b\otimes{}^\updownarrow T_a^\updownarrow\otimes T_b$
and~$T_p$.

The remaining cases of our discussion of non-simple upper bounds
are $D=\{3,4\}$ and $D=\{3,4,5\}$.
For $D=\{3,4\}$, an exhaustive search reveals a new
max-density candidate $T_n$ of $\frac{15}4$.
For $D=\{3,4,5\}$, the aforementioned tile $T_e$ is a single max-density
candidate according to our classification.
However, this is a special case since combining $T_e$ with
itself necessarily creates degree-$6$ vertices.
Thus, in order to surpass the density $\frac 92$ of the next 
lower available tile $T_f$, one would need to combine $T_e$
with a tile of wall degrees $2$ and density at least $\frac{13}3$ which
does not exist.
Hence the true upper bound in this case is $\frac 92$ of $T_f$
which, though, cannot be achieved due to~$3\in D$.
Finally, having resolved all lower and upper bounds, we finish again by
Lemma~\ref{cr:Dinterval}~(iii).
\qed
\end{proof}

%%%%%%%%%%%%%%%%%%%%%%%%%%%%%%%%%%%%%%%%%%%%%%%%%%%%%%%%%%%%%%%%%%%%%%%%%%% 

\section{Final Remarks}
\label{sc:final}

We conclude with some challenges for further possible research.
The statement of Theorem~\ref{thm:alluniversal} 
always requires $4\in D$, but from 
Theorem~\ref{th:2crossD} we know that there exist
$D$-max-universal families of simple, 
$3$-connected, $2$-crossing-critical graphs
for $D=\{3,5\}$ and $D=\{3,6\}$ 
(Figures~\ref{fig:2cross35},~\ref{fig:2cross36}),
 e.g., when $4\not\in D$, and these can be generalized to any $k>2$ by 
a zip product with copies of $K_{3,3}$.

Hence it is an interesting open question of whether there exists a
$D$-max-universal $k$-crossing-critical family such that
$D\cap\{3,4\}=\emptyset$.
It is unlikely that the answer would be easy since the question is related
to another long standing open problem---whether there exists a $5$-regular
$k$-crossing-critical infinite family.
Related to this is the same question of existence of a $4$-regular
$k$-crossing-critical family,
which does exist for $k=3$ \cite{cit:rtcrit} and the construction can be
generalized to any $k\geq6$, but the cases $k=4,5$ remain open.

Many more questions can be asked in a direct relation to the statement of
Theorem~\ref{thm:alluniversal-avgdeg}, but we only 
mention a few of the most interesting ones.
E.g., if $6\not\in D$, can the average degree of such a family be from the
interval $[5,6)$?
Or, assuming $3\in D$ but $4\not\in D$, for which sets $D$ one can achieve
$D$-max-universality and what are the related average degrees?

\smallskip
Concerning specifically $2$-crossing-critical graphs, there are no open
questions or cases left by the results of Section~\ref{sc:2cc}.
Yet, there is a natural open question related to Theorem~\ref{th:2ccDmu},
namely; how would the sets of admissible average degrees $I_D$ and $I_D^s$
change if we require all the vertex degrees in the constructed
$D$-max-universal family to belong to~$D$?
There is a two-fold effect of this restriction.
First, we would not be allowed to resolve the parity problem
by adding an arbitrary small tile,
and second, we could get some undesired degrees when joining two different tiles.

This question of precise degree set is nontrivial since, for example,
in the case of simple graphs and $D=\{3,4\}$, we have only two tiles
and there exist values of $r$ which actually force the total number of
these tiles to be even (so our construction is not realizable).
We leave this question open for further investigation.

\smallskip
We finish with another interesting structural conjecture:
\begin{conjecture}
There is a function $g:\NN\to\RR^+$ such that,
any sufficiently large simple $3$-connected $k$-crossing-critical graph has
average degree greater than $3+g(k)$.
\end{conjecture}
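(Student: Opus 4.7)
The plan is to proceed by contradiction, leveraging the bounded pathwidth of $k$-crossing-critical graphs established in \cite{cit:pw}. Fix $k\in\NN$ and suppose there is a sequence $(G_n)_{n\ge 1}$ of simple, $3$-connected, $k$-crossing-critical graphs with $|V(G_n)|\to\infty$ and average degree $\bar d(G_n)\to 3$. Since the surplus $\sum_{v}(\deg v-3)=2|E(G_n)|-3|V(G_n)|$ is a nonnegative integer, $\bar d(G_n)\to 3$ forces it to be $o(|V(G_n)|)$; hence an overwhelming majority of vertices have degree exactly~$3$, while at most $o(|V(G_n)|)$ vertices have larger degree. The goal is to show that, under these hypotheses, $|V(G_n)|$ must in fact be bounded, which yields the desired $g(k)>0$.

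Next, I would fix a path decomposition of $G_n$ of width at most $p(k)$ and cut it into consecutive slices separated by small vertex cuts of size at most $p(k)+1$. Because the number of non-cubic vertices is $o(|V(G_n)|)$ while the number of slices grows linearly with $|V(G_n)|$, for $n$ large there exist arbitrarily long contiguous runs of slices whose internal and boundary vertices all have degree exactly~$3$ in $G_n$. The isomorphism type of a slice (together with its interface) is drawn from a finite set depending only on $k$, so a pigeonhole argument produces arbitrarily long runs of mutually isomorphic consecutive cubic slices. This packages a portion of $G_n$ in the language of Section~\ref{sc:gd:tools}: $G_n$ contains a long iterated join $T\otimes{}^\updownarrow T^\updownarrow\otimes T\otimes\cdots$ of a fixed cubic tile~$T$.

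The final step is to invoke criticality to derive a contradiction. Using Theorem~\ref{th: theorem1} and Corollary~\ref{co: planarity}, a long enough such repetition in a $k$-crossing-critical graph should force $T$ to be \emph{perfect planar}, and then one argues that an interior edge $e$ of a middle copy of $T$ admits a local rerouting preserving every optimal drawing, so that $\crn(G_n-e)\ge k$, contradicting criticality of~$e$. Alternatively, for a quantitative version of the conjecture, one could trace the dependence of the width bound in~\cite{cit:pw} on the maximum degree and try to extract an explicit polynomial bound $g(k)\ge c/k^{O(1)}$.

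The main obstacle is precisely the perfect-planar extraction from a path-decomposition slice. While the upper bound of Lemma~\ref{le: lema1} is straightforward, the lower-bound machinery of Theorem~\ref{th: theorem1} needs both planarity of the repeating tile and its perfectness (the disjoint-paths condition between walls), neither of which is guaranteed by a generic slice coming from a path decomposition, even under $3$-connectivity. Bridging this gap, i.e., showing that any long cubic repeating block in a $3$-connected critical graph can be re-sliced into perfect planar tiles, is likely the true difficulty; a natural intermediate step is to establish the conjecture first under the stronger hypothesis of $4$-edge-connectedness or cyclic $4$-edge-connectedness, where nontrivial $3$-cuts are forbidden and the slice structure is considerably more rigid.
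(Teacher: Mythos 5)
This statement is not proved in the paper at all: it appears in Section~\ref{sc:final} explicitly as an open \emph{conjecture}, accompanied by the observation that the staircase strips give $g\bigl(\tbinom{n}{2}-1\bigr)<\tfrac{1}{4n-7}$, so any valid $g$ must decay roughly like $k^{-1/2}$. Your proposal should therefore be judged as an attempted resolution of an open problem, and as it stands it is a research plan rather than a proof --- you say so yourself in the closing paragraph, and the gap you flag is indeed fatal to the argument in its current form.

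Concretely, three steps are missing. First, pigeonhole on slice types of a bounded-width path decomposition yields many pairwise isomorphic cubic slices, but not a long run of \emph{consecutive} isomorphic slices glued compatibly; manufacturing a genuine periodic substructure $T\otimes{}^\updownarrow T^\updownarrow\otimes T\otimes\cdots$ requires controlling how successive slices attach to one another, not just their isomorphism types. Second, even granting such a repetition, Theorem~\ref{th: theorem1} and Corollary~\ref{co: planarity} take perfect planarity of the repeated tile as a \emph{hypothesis} in order to prove lower bounds; nothing in the machinery of Section~\ref{sc:gd:tools} forces a repeated cubic block inside a $k$-crossing-critical graph to be a perfect planar tile, and a generic path-decomposition slice of a $3$-connected graph need not satisfy the disjoint-paths conditions of perfectness. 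Third, the decisive claim --- that an interior edge $e$ of a middle copy of $T$ satisfies $\crn(G_n-e)\geq k$ --- is asserted via an unspecified ``local rerouting''; this is exactly the negation of $k$-degeneracy in Definition~\ref{def:criticalSeq}, and the staircase families of Proposition~\ref{prop:critconstructions}\,\ref{it:3to3}) show that long repetitions of very sparse cubic tiles \emph{can} have every edge critical, so the argument must quantitatively separate ``average degree $3+o(1)$ in $k$'' from ``average degree $3+\Omega(1/\sqrt{k})$'' rather than derive a contradiction from sparseness alone. Until these gaps are bridged, the conjecture remains open, as the authors intend.
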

Note that corresponding result ``on the upper side'', i.e., bounding average degrees 
away from~$6$, has been established in \cite{cit:nestedcycles}. Furthermore, note that the staircase strip generalization of Kochol's original implies $g(\binom{n}{2}-1)<\tfrac{1}{4n-7}$, cf.\ Theorem~\ref{thm:constrodd}. The following problem therefore poses itself naturally: 

\begin{problem}
Do staircase strips yield the sparsest $k$-crossing-critical graphs, ie.\ does there exist a $k$-crossing-critical family of graphs with average degree less than $$3+\frac{1}{2\sqrt{1+8(k+1)}-5}?$$
\end{problem}

%%%%%%%%%%%%%%%%%%%%%%%%%%%%%%%%%%%%%%%%%%%%%%%%%%%%%%%%%%%%%%%%%%%%%%%%%%%%%%%%%%%%
\begin{small}
\bibliographystyle{abbrv}
\bibliography{phcross}
\end{small}

%%%%%%%%%%%%%%%%%%%%%%%%%%%%%%%%%%%%%%%%%%%%%%%%%%%%%%%%%%%%%%%%%%%%%%%%%%%%
%%%%%%%%%%%%%%%%%%%%%%%%%%%%%%%%%%%%%%%%%%%%%%%%%%%%%%%%%%%%%%%%%%%%%%%%%%%%
%%%%%%%%%%%%%%%%%%%%%%%%%%%%%%%%%%%%%%%%%%%%%%%%%%%%%%%%%%%%%%%%%%%%%%%%%%%%
%%%%%%%%%%%%%%%%%%%%%%%%%%%%%%%%%%%%%%%%%%%%%%%%%%%%%%%%%%%%%%%%%%%%%%%%%%%%

\end{document}